\newtheorem{theorem}{Theorem}
\newtheorem{proposition}[theorem]{Proposition}
\newtheorem{lemma}[theorem]{Lemma}
\newtheorem{corollary}[theorem]{Corollary}
\newtheorem{sumtheorem}{Theorem}
\newtheorem{sumlemma}[sumtheorem]{Lemma}
\newtheorem{sumcorollary}{Corollary}[sumtheorem]
\theoremstyle{definition}
\newtheorem{definition}{Definition}
\newtheorem{example}[theorem]{Example}
\newtheorem{remark}[theorem]{Remark}
\numberwithin{theorem}{section}
\numberwithin{definition}{section}
\newcommand{\nC}{{\mathbb C}}
\newcommand{\nH}{{\mathbb H}}
\newcommand{\bs}{{\backslash}}
\newcommand{\Cara}{Carath\'{e}odory }
\newcommand{\Teich}{Teichm\"{u}ller }
\newcommand{\Hol}{H\"{o}lder }
\newcommand{\Mob}{M\"{o}bius }
\newcommand{\chat}{\ensuremath{\hat{\mathbb{C}}}}
\DeclareMathOperator{\Real}{Re}
\DeclareMathOperator{\Imag}{Im}
\DeclareMathOperator{\Arg}{arg}
\DeclareMathOperator{\diam}{diam}
\DeclareMathOperator{\hcap}{hcap}
\DeclareMathOperator{\Fill}{fill}
\DeclareMathOperator{\sgn}{sgn}
\DeclareMathOperator{\Aut}{Aut}
\DeclareMathOperator{\dist}{dist}
\def\2#1{\ifnum#1<10 0\fi\the#1}
\xdef\isodayandtime%
\renewcommand{\tabcolsep}{0.2cm}
\renewcommand{\arraystretch}{1}
\title{A deterministic approach to Loewner-energy minimizers}
\author{Tim Mesikepp}
\address{Beijing International Center for Mathematics Research, Peking University, China}
\email{tmesikepp@gmail.com}
\date{}
\begin{document}
\begin{abstract}
    We study two minimization questions: the nature of curves $\gamma \subset \mathbb{H}$ which minimize the Loewner energy among all curves from 0 to a fixed $z_0 \in \mathbb{H}$, and the nature of $\gamma$ which minimize the Loewner energy among all curves that weld a given pair $x<0 <y$.   The former question was partially studied by Yilin Wang, who used SLE techniques to calculate the minimal energy and show it is uniquely attained \cite{WangReverse}.  We revisit the question using a purely deterministic methodology, and re-derive the energy formula and also obtain further results, such as an explicit computation of the driving function. Our approach also yields existence and uniqueness of minimizers for the welding question, as well as an explicit energy formula and explicit driving function.  In addition, we show both families have a ``universality'' property; for the welding minimizers this means that there is a single, explicit algebraic curve $\Gamma$ such that truncations of $\Gamma$ or its reflection $-\Bar{\Gamma}$ in the imaginary axis generate all welding minimizers up to scaling.  While Wang noted her minimizer is SLE$_0(-8)$, we show the welding minimizers are SLE$_0(-4,-4)$.  Our results also show sharpness of a case of the driver-curve regularity theorem of Carto Wong \cite{Wong}.
\end{abstract}
\subjclass[2020]{30C35, 30C75}
\maketitle

\section{Introduction and main results}

\subsection{Loewner energy and two minimization questions}
Consider a simple curve $\gamma = \gamma[0,T]$ in the upper half plane $\mathbb{H}$ starting from zero.  For each $0 < t \leq T$, $\mathbb{H} \bs \gamma[0,t]$ is simply connected, and we can thus ``map down'' the curve with a Riemann map $g_t:\mathbb{H}\bs \gamma[0,t] \rightarrow \mathbb{H}$ which fixes $\infty$ and takes the tip $\gamma(t)$ to some point $\lambda(t) \in \mathbb{R}$.  Since every other point $\gamma(s) \in \gamma[0,t)$ on the curve corresponds to two prime ends in $\mathbb{H} \bs \gamma[0,t]$, $g_t$ ``cuts $\gamma(s)$ in half,'' sending it to two points on either side of $\lambda(t)$.  Thus $g_t$ ``unzips'' $\gamma[0,t]$ to two intervals around $\lambda(t)$, as in Figure \ref{Fig:LoewnerFlowB}.  

When we appropriately parametrize $\gamma$ and appropriately normalize $g_t$, this flow of conformal maps for the growing curves $\gamma[0,t]$ satisfies \emph{Loewner's equation}
\begin{align}\label{Eq:LoewnerEqIntro}
    \frac{\partial g_t(z)}{\partial t} =: \dot{g}_t(z) = \frac{2}{g_t(z) -\lambda(t)}, \qquad g_0(z) = z,
\end{align}
where $\lambda(t) := g_t(\gamma(t))$ is the image of the tip, also known as the \emph{driving function} of $\gamma$ (see \S\ref{Sec:BackgroundLoewner} for details).  This differential equation uniquely encodes $\gamma$ through the real-valued function $\lambda$, allowing one to analyze $\gamma$ by means of $\lambda$.
\begin{figure}
    \centering
    \includegraphics[scale=0.16]{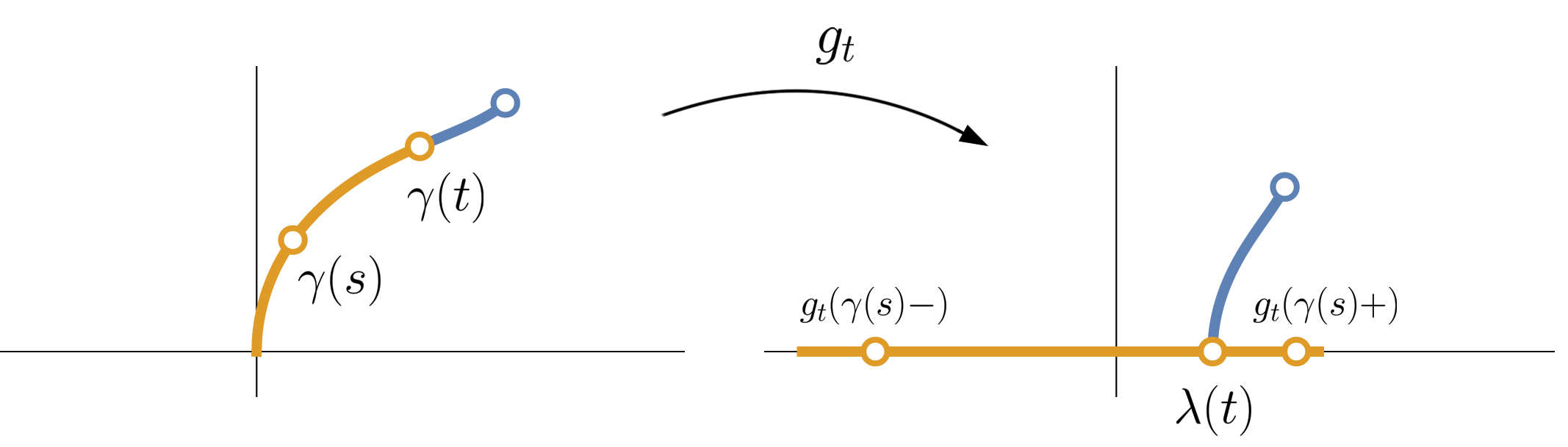}
    \caption{\small The ``mapping down'' function $g_t$ ``unzipping'' the curve $\gamma$.}
    \label{Fig:LoewnerFlowB}
\end{figure}
For example, one can consider the \emph{Loewner energy} $I(\gamma)$ of $\gamma$, which is defined as the Dirichlet energy of the driving function,
\begin{align}\label{Eq:LoewnerEnergy1}
    I(\gamma) := \frac{1}{2}\int_0^\infty \dot{\lambda}(t)^2dt,
\end{align} 
when $\lambda$ is absolutely continuous, and $I(\gamma) = +\infty$ otherwise.  One can think of the Loewner energy as a measurement of the deviation of $\gamma$ from a hyperbolic geodesic, as $I(\gamma) = 0$ if and only if $\gamma$ is a hyperbolic geodesic (see \S\ref{Sec:BackgroundLoewnerEnergy} for the trivial argument). Moreover, $I(\gamma)$ is a conformally-natural measurement, in the sense of being invariant under conformal automorphisms of $\mathbb{H}$, as shown by Yilin Wang \cite{WangReverse}.  While the energy appears to be defined by means of the parametrization of $\lambda$, Wang also showed \cite{WangEquiv} it can be expressed purely in terms of Riemann maps $g_t$ for $\gamma$.  The same work additionally shows that the loop version of Loewner energy (where the integral in \eqref{Eq:LoewnerEnergy1} becomes over all of $\mathbb{R}$ instead of just $\mathbb{R}_{\geq 0}$) characterizes the Weil-Petersson quasicircles of the universal \Teich space $T_0(1)$.  That is, finite-energy loops are the closure of smooth Jordan loops in the Weil-Petersson metric.  Bishop \cite{Bishop, Bishop2} built off this to give a slew of $\ell^2$-type geometric characterizations of finite-energy curves, giving the further intuition that finite-energy $\gamma$ are precisely those possessing ``square-summable curvature over all positions and scales.''\footnote{Here ``curvature'' refers to functionals such as the $\beta$-numbers from geometric measure theory, and not to classical notions that require second derivatives; $\gamma$ need not be in $C^2$, rather, its arc-length parametrization is $H^{3/2}$ \cite{Bishop}.}

While we discuss further background of the Loewner energy in \S\ref{Sec:BackgroundLoewnerEnergy}, it is already evident that $I(\gamma)$ is a noteworthy functional on curves. It is only natural, then, to ask what $\gamma$ minimize it.  If there are no constraints, then the answer is immediate and uninteresting: $\gamma$ is a straight line orthogonal to $\mathbb{R}$, which has zero energy.  In this paper we study the next case beyond this, considering the nature of ``one-point'' minimizers in the following two senses:
\begin{enumerate}[$(i)$]
    \item\label{Q:HMinimizer} What is the infimal energy among all $\gamma$ which pass through a given point $z=re^{i\theta} \in \mathbb{H}$, and what is the nature of minimizers, if they exist?
    \item\label{Q:WeldingMinimizer} What is the infimal energy among all $\gamma$ which start from 0 and weld a given $x<0<y$ to their base, and what is the nature of minimizers, if they exist?
\end{enumerate}
As we will see, question $(\ref{Q:HMinimizer})$ has already been partially investigated, but we revisit it using entirely different techniques, yielding fresh proofs for what has been known and also obtaining further results.  To our knowledge, the second question is thus far unexplored.   

\subsection{Answering question (\ref{Q:HMinimizer}): the one-point minimizers}\label{Intro:Wang}
Wang considered curves from 0 to some $re^{i\theta} \in \mathbb{H}$ and showed the infimal energy in this family is
\begin{align}\label{Eq:Energy1}
    I(\gamma) = -8\log(\sin(\theta)),
\end{align} 
and that it is furthermore uniquely attained \cite[Prop. 3.1]{WangReverse} ($r$'s absence in this formula reflects scale-invariance of the energy).  We call these curves the \emph{energy minimizers for one point}, or \emph{EMP curves}, for short.  Wang's argument was stochastic in nature, using Schramm's formula for the probability that SLE passes to the right of $re^{i\theta}$ \cite{SchrammPerc}, combined with a SLE$_{0+}$ large-deviations result.  While the argument is undeniably elegant, one wonders if these results could be re-derived without resorting to the probability toolbox, as they are intrinsically deterministic.  We answer here in the affirmative, and are able to derive (and re-derive) the following about EMP curves without probabilistic machinery.  See Theorem \ref{Thm:Wang} for the precise statements.

\begin{sumtheorem}\label{SumTheorem:Wang}
    \begin{enumerate}[$(i)$]
        \item \cite[Prop. 3.1]{WangReverse} EMP curves $\gamma_\theta$ exist, are unique, satisfy the energy formula \eqref{Eq:Energy1} and are (downwards) SLE$_0(-8)$.
        \item We have explicit formulas for both the driving function and the conformal welding (\eqref{Eq:WangDriver} and \eqref{Eq:WangWeld}, respectively).
        \item\label{SumThm:AUniversal} For any fixed $\theta \in (0,\pi)\bs \{\pi/2\}$, both the driver and the welding for the single EMP curve to $e^{i\theta}$ extend (using the identical formulas) to generate \emph{all} EMP curves up to scaling, translation and reflection in the imaginary axis.
        \item As $\theta \rightarrow 0$ or $\theta \rightarrow \pi$, $\gamma_\theta$ limits to an explicit algebraic curve which is part of a ``boundary geodesic pair.'' 
    \end{enumerate}
\end{sumtheorem}
\noindent We proceed in \S\ref{Intro:TheoremADescription} to summarize the content of Theorem \ref{SumTheorem:Wang} and then in \S\ref{Intro:TheoremACorollaries} to describe two corollaries.

\subsubsection{Overview of Theorem \ref{SumTheorem:Wang}}\label{Intro:TheoremADescription}
We start in part $(\ref{Thm:WangEnergy})$ of the theorem by deterministically re-deriving the minimal-energy formula \eqref{Eq:Energy1}. Using the resulting system of differential equations, we also re-derive Wang's result \cite[(3.2)]{WangReverse} that the minimizer is (downwards) SLE$_0(-8)$ with the force point $U(t)$ starting at $U(0) = e^{i\theta}$.  Recall that this means that the driving function $\lambda_\theta$ for $\gamma_\theta$ evolves according to 
\begin{align}\label{Eq:SLE0FirstLook}
    \dot{\lambda}_\theta(t) = \Real\Big(\frac{-8}{\lambda_\theta(t)-U(t)}\Big), \qquad \lambda_\theta(0) = 0,
\end{align}
while all other points $z(t)$, including $U(t)$, satisfy the standard Loewner equation \eqref{Eq:LoewnerEqIntro} determined by $\lambda_\theta$.  In particular, $\lambda_\theta$ experiences a horizontal attractive force towards $U$, while $U$ is horizontally repulsed by $\lambda_\theta$.\footnote{See \S\ref{Intro:Welding} for more discussion of SLE$_0(\rho)$, and  \S\ref{Sec:BackgroundSLE} for general background on SLE$_\kappa(\rho_1, \ldots, \rho_n)$ processes.}

By further studying the system of ODE's, we proceed in part $(\ref{Thm:WangDriver})$ of the theorem to explicitly describe SLE$_0(-8)$ by computing its driving function and conformal welding, with the latter building off calculations in \cite{MRW}.
\begin{figure}
    \centering
    \includegraphics[scale=0.13]{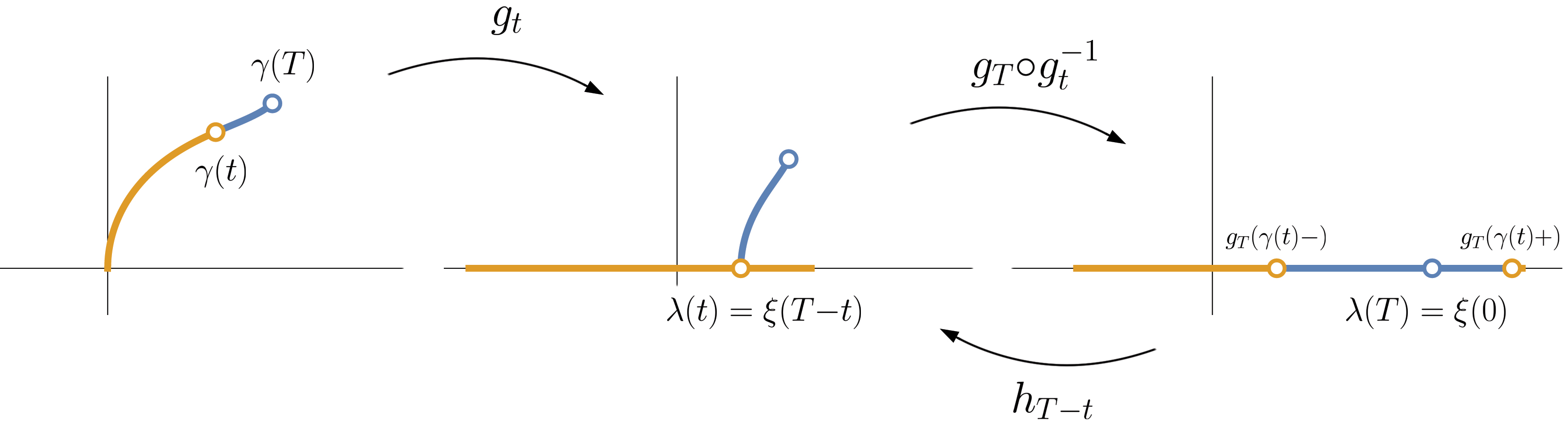}
    \caption{\small The downwards and upwards Loewner flows for $\gamma = \gamma[0,T]$.}
    \label{Fig:LoewnerFlow}
\end{figure}

Part $(\ref{Thm:WangUniversal})$ of the theorem says that the drivers and weldings have a ``universality'' property, and to help explain this, we note that it is natural to work with the ``upwards'' driver for the EMP curves.  By this we mean the function $\xi_\theta(t) := \lambda_\theta(T-t)$ arising from ``reversing the movie'' of the $g_t$ by considering the conformal maps $h_t := g_{T-t} \circ g_T^{-1}$, as in Figure \ref{Fig:LoewnerFlow}.  By \eqref{Eq:LoewnerEqIntro} the $h_t$ satisfy
\begin{align}\label{Eq:LoewnerEqIntroUp}
    \dot{h}_t(z) = \frac{-2}{h_t(z) - \xi(t)}, \qquad h_0(z) = 0,
\end{align}
and we call the dynamics given by  \eqref{Eq:LoewnerEqIntro} and \eqref{Eq:LoewnerEqIntroUp} the \emph{forwards} and \emph{backwards} Loewner flows, respectively.  We will often speak more colloquially and simply call them, respectively, the \emph{downwards} and \emph{upwards} flows, as Figure \ref{Fig:LoewnerFlow} naturally suggests.  We use $\lambda$ to denote a \emph{forwards/downwards} driving function appearing in \eqref{Eq:LoewnerEqIntro}, and $\xi$ for a \emph{backwards/upwards} driving function appearing in \eqref{Eq:LoewnerEqIntroUp}. 

Let $\xi_\theta$ be the upwards driver which generates the EMP curve $\gamma_\theta$ from 0 to $e^{i\theta}$ on the time interval $[0,\tau_\theta]$, where $0 < \theta< \pi/2$ is fixed. The point of part ($\ref{SumThm:AUniversal}$) of the theorem is that $\xi_\theta$ not only generates $\gamma_\theta$, but also \emph{all} EMP curves for angles $0 < \alpha < \pi/2$, up to translation and scaling.  That is, $\xi_\theta$ is well-defined for all $t \geq 0$, and the curve it generates on any interval $[0,t]$ is a EMP curve for some $e^{i\alpha_t}$ (after translation and scaling), where $\alpha_t$ is decreasing in $t$ and has the entire interval $(0,\pi/2)$ as its range.  By symmetry, $-\xi_\theta$ generates all EMP curves for angles $\pi/2 < \alpha <\pi$ (up to scaling and translation), and so each $\xi_\theta$ is actually universal for all the EMP curves. We also show that the conformal welding has an analogous universality property: the formula \eqref{Eq:WangWeld} for a single EMP curve is defined for all $x<0$, and welds any EMP curve $\gamma_\alpha$ (up to scaling and reflection, and excluding the trivial case $\alpha=\pi/2$) upon restriction to an appropriate interval.


In part ($\ref{Thm:Wang0+}$) we send $\theta$ tends to zero and show that taking the na\"{i}ve $\theta \rightarrow 0^+$ limit in our driving function formula yields the correct driver $\xi_0(t) = -\frac{8}{\sqrt{3}}\sqrt{t}$ for the limiting curve $\gamma_0$, and that furthermore $\gamma_0$ is a subset of an explicit cubic algebraic variety.  One subtlety here is that the limiting curve touches the real line at $x=1$ and thus has infinite energy, and so usual compactness tools for collections of curves of finite energy are not available.  

Beyond identifying the limit, we show that $\gamma_0$ and its reflection $\gamma_0^*$ across $\partial \mathbb{D}$ is a \emph{boundary geodesic pair}, in the sense that $\gamma_0$ is the hyperbolic geodesic from 0 to 1 in its component of $\mathbb{H} \bs \gamma_0^*$, and $\gamma_0^*$ is the hyperbolic geodesic from 1 to $\infty$ in its component of $\mathbb{H} \bs \gamma_0$.  See Figure \ref{Fig:Gamma0}.  

\subsubsection{Two corollaries of Theorem \ref{SumTheorem:Wang}}\label{Intro:TheoremACorollaries}
A geodesic pair in the sense of Figure \ref{Fig:Gamma0} slightly extends that defined by Marshall, Rohde and Wang \cite{MRW}, in that the point of intersection $\zeta = \gamma_0 \cap \gamma_0^*$ between the two geodesics is on the boundary of the domain.\footnote{We note that such geodesic pairs have been considered by Krusell under the name of ``fused geodesic multichords'' \cite[\S 4]{Krusell}.  See \cite{Alberts} and \cite{PeltWang} for the ``unfused'' case, where the geodesics do not intersect on the boundary.}  In summary form, our considerations show:
\begin{sumcorollary}[Corollary \ref{Cor:BoundaryGeodesic}]
    If $D \subsetneq \mathbb{C}$ is a simply-connected domain with boundary prime ends $a_1,a_2$ and $\zeta$, where $\zeta \notin \{a_1,a_2\}$, there is a unique boundary geodesic pair $\gamma_1 \cup \gamma_2$ in $(D;a_1,a_2,\zeta)$.
\end{sumcorollary}
\noindent In our example, $(D;a_1,a_2,\zeta) = (\mathbb{H}; 0,\infty,1)$ and $\gamma_1 \cup \gamma_2 = \gamma_0 \cup \gamma_0^*$.

Our second corollary of Theorem \ref{SumTheorem:Wang} is the sharpness of a theorem of Carto Wong.  In \cite{Wong}, Wong studied how the regularity of a driver $\lambda$ relates to the smoothness of the curve $\gamma = \gamma^\lambda$ it generates.  Using a novel observation about the Lipschitz continuity of the map $\lambda \mapsto \gamma$ when $\lambda$ has small $1/2$-H\"{o}lder norm, he proved $\gamma$ generally has half a degree of regularity more than $\lambda$.  That is, if $\lambda \in C^\beta$, then $\gamma \in C^{\beta +1/2}$.  There is an asterisk, however: when $\lambda \in C^{3/2}$, Wong only proved that $\gamma$ is ``weakly'' $C^{1,1}$, not $C^2$, which is to say the capacity parametrization $t \mapsto \gamma(t)$ satisfies
\begin{align}\label{Ineq:WeakC11Intro}
    |\gamma'(s) - \gamma'(t)| \leq C\max\Big\{1, \log \frac{1}{|s-t|} \Big\}|s-t|
\end{align}
nearby any $t>0$ (there is a slight technicality at $t=0$ which is irrelevant for our considerations; see \S\ref{Sec:CorollaryWong} for details and the Wong's precise theorem statement). Lind and Tran \cite[Example 7.1]{LindTran} subsequently gave an example of a $C^{3/2}$ driver whose associated curve was $C^{1,1}$ but not $C^2$, showing Wong's result was close to optimal. It has not been known, however, if the logarithm factor in \eqref{Ineq:WeakC11Intro} is necessary.  Combined with a conformal map computation from \cite{MRW}, our explicit driver formula  in Theorem \ref{SumTheorem:Wang}$(\ref{Thm:WangDriver})$ allows us to answer in the affirmative:
\begin{sumcorollary}[Corollary \ref{Cor:WongSharp}]
    Wong's theorem is sharp: for any $\theta \in (0,\pi)\bs \{\pi/2\}$, $\gamma = \gamma_\theta \cup \gamma_\theta^*$ has driver which is $C^{3/2}$ and capacity parametrization that is weakly $C^{1,1}$, where one cannot replace the logarithm in \eqref{Ineq:WeakC11Intro} with any function of slower growth.
\end{sumcorollary}
\noindent Combined with the example from Lind and Tran, this shows there are drivers of the same regularity which generate curves with capacity parametrizations of differing regularity (Corollary \ref{Cor:Regularity}).

Just as both driving functions are $C^{3/2}$, the Riemann maps for the Lind-Tran example and for the EMP-curve geodesic pair have the same boundary regularity.  The regularities of the arc-length parametrizations of the two curves are different, however (see Table \ref{Table:WongSharp}).  We classify why this happens in Lemma \ref{Lemma:ArcLengthCoeff} in terms of the coefficients of the local expansion of the conformal map.  

\begin{figure}
    \centering
    \includegraphics[scale=0.1]{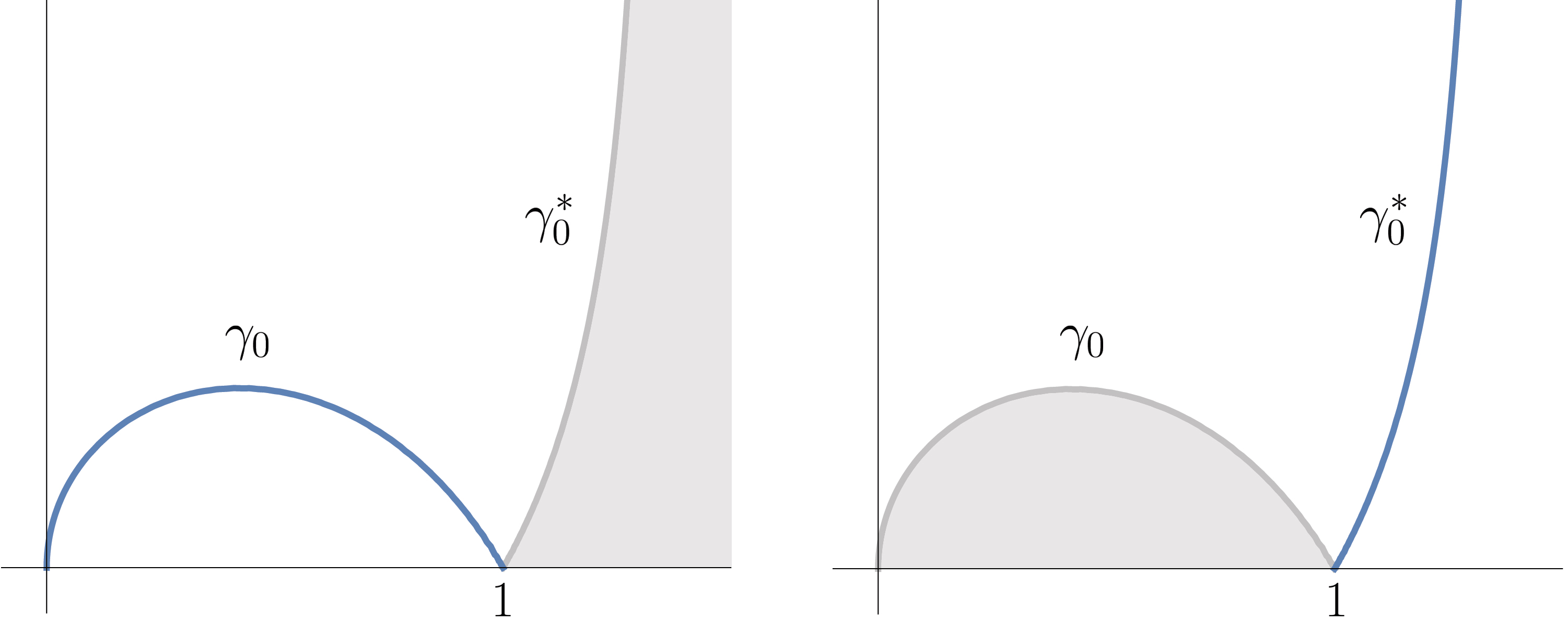}
    \caption{\small The limiting curve $\gamma_0$ for the EMP curves and its reflection $\gamma_0^*$ across $\partial \mathbb{D}$ form a \emph{boundary geodesic pair}, in the sense that $\gamma_0$ is the hyperbolic geodesic from 0 to 1 in its component of $\mathbb{H} \bs \gamma_0^*$, and $\gamma_0^*$ is a hyperbolic geodesic from 1 to $\infty$ in its component of $\mathbb{H} \bs \gamma_0$.}
    \label{Fig:Gamma0}
\end{figure}

\subsection{Answering question (\ref{Q:WeldingMinimizer}): the welding minimizers}\label{Intro:Welding}

Our deterministic techniques yield a similar scope of results for the second question.  To clarify the question statement, recall that the conformal welding $\varphi_t$ of the curve $\gamma$ is the map (determined by $\gamma$ and normalized by $g_t$) which identifies the two images of the points $\gamma(s)$ ``split in half'' by $g_t$.  That is, $\varphi_t$ satisfies $g_t^{-1}(x) = \gamma(s) = g_t^{-1}(\varphi_t(x))$, where $g_t^{-1}(x)$ is the extension of $g_t^{-1}$ to $x \in \mathbb{R}$.  For example, $\varphi_t(g_t(\gamma(s)-)) = g_t(\gamma(s)+)$ in Figure \ref{Fig:LoewnerFlowB}, while similarly $\varphi_T(g_T(\gamma(t)-)) =g_T(\gamma(t)+)$ in Figure \ref{Fig:LoewnerFlow}.  We typically normalize our weldings by considering the \emph{centered downwards flow} maps $G_t(z) := g_t(z) - \lambda_t$, in which case $\varphi_t$ maps an interval $[x_t,0]$ to the left of the origin homeomorphically to an interval $[0,y_t]$ on the right.

The precise formulation of the welding minimization question is thus: for fixed $x<0<y$, what is the infimal energy among all curves $\gamma: [0,T] \rightarrow \mathbb{H}\cup \{0\}$ which satisfy $\gamma(0)=0$ and have conformal welding $\varphi_T:[x,0]\rightarrow [0,y]$ under $G_T$?  The following theorem computes this energy, shows minimizers exist, and studies their properties.  We call these curves the \emph{energy minimizers for weldings (EMW)} family. See Theorem \ref{Thm:EMW} for precise statements.
\begin{sumtheorem}\label{SumTheorem:EMW}
    \begin{enumerate}[$(i)$]
        \item\label{SumThm:EMWSLE0} EMW curves $\gamma$ exist, are unique, satisfy the energy formula 
        \begin{align*}
            I(\gamma) = -8\log\Big(\frac{2 \sqrt{-xy}}{y-x}\Big),
        \end{align*}
        and are (upwards) SLE$_0(-4,-4)$.
        \item We have an explicit formula \eqref{Eq:EMWDriverFormula} for the driving function and an implicit formula \eqref{Eq:EMWWeldImplicit} for the conformal welding.
        \item\label{SumThm:EMWUniversal} There is an explicit algebraic curve which generates all EMW curves up to scaling and reflection in the imaginary axis.
        \item\label{SumThm:DistinctFamilies} An EMP curve $\gamma_\theta$ coincides with an EMW curve if and only if $\theta=\pi/2$. 
    \end{enumerate}
\end{sumtheorem}
Theorem \ref{SumTheorem:EMW} shows that the EMP and EMW curves share many similarities.  For instance, property $(\ref{SumThm:EMWUniversal})$ is a ``universality'' statement: we show there exists a single curve $\Gamma:[0,T] \rightarrow \mathbb{H}\cup\{0\}$ such that for every ratio $r \in (0,1)$, $\Gamma[0,t_r]$ is a welding minimizer for points $x_r < 0 < y_r$ with $-x_r/y_r=r$.  That is, up to scaling, $\Gamma$ single-handedly parametrizes all the welding minimizers with ratios between 0 and 1 (and its reflection $-\overline{\Gamma}$ across the imaginary axis does so for ratios $1<r<\infty$).  See Figure \ref{Fig:UniversalEMW}.  Furthermore, like the limiting EMP curve $\gamma_0$, $\Gamma$ is algebraic, and we explicitly identify the associated real quartic variety \eqref{Eq:EMWUniversalVariety} and show that its complex square $\Gamma^2$ is nothing other than a circle tangent to $\mathbb{R}$.

\begin{figure}
    \centering
    \includegraphics[scale=0.5]{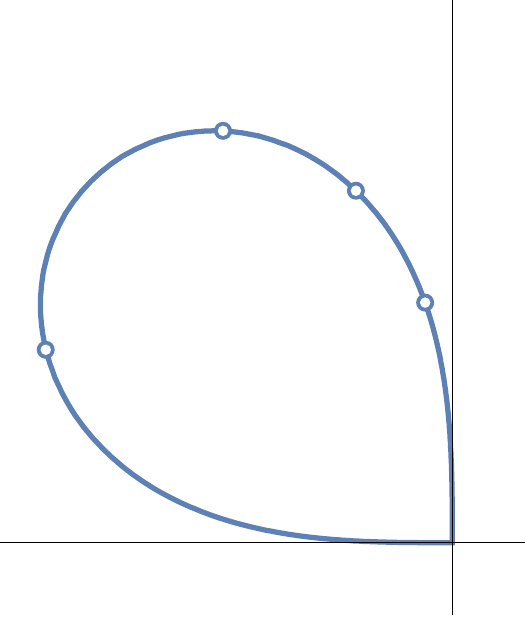}
    \caption{\small The universal curve $\Gamma$ for the welding minimizers with ratios $0 <r = -x/y <1$.  Starting from closest to the imaginary axis and moving counterclockwise, the four points show where to truncate for EMW curves with welding endpoint ratios $r=0.75, 0.5, 0.25$ and $0.01$, respectively.  The complex square of $\Gamma$ is a circle tangent at the origin to $\mathbb{R}$.}
    \label{Fig:UniversalEMW}
\end{figure}

Another parallel is that both families are SLE$_0$ with forcing.  As we saw above, the EMP curves $\gamma_\theta$ are downwards SLE$_0(-8)$ with forcing starting at the tip of the curve.  Theorem \ref{SumTheorem:EMW}($\ref{SumThm:EMWSLE0}$) says that the EMW family is \emph{upwards} SLE$_0(-4,-4)$, with force points starting at $V_1(0)=x, V_2(0)=y$, the two points to be welded together.  That is, the upwards driving function $\xi(t) := \lambda(T-t)-\lambda(T)$ satisfies
\begin{align}\label{Eq:UpSLE_0-4-4}
    \dot{\xi}(t) = \frac{4}{\xi(t)-V_1(t)} + \frac{4}{\xi(t) -V_2(t)}, \qquad \xi(0) = 0,
\end{align}
while all other points evolve according to \eqref{Eq:LoewnerEqIntroUp}. In particular, $\xi$ is repulsed by both the welding points, while both the welding points are attracted to $\xi$.\footnote{Note that another way to formulate version of SLE$_0$ is as the standard downwards SLE$_0(-4,-4)$, but with the two force points placed at the two prime ends of $\mathbb{H}\bs \gamma$ corresponding to the base of $\gamma$.  The precise meaning of this, though, is the above upwards-flow formulation. Thus one should keep in mind that upwards SLE$_0$ and downwards SLE$_0$ generate the same curve $\gamma$ so long as one handles the force points appropriately.  We choose to use the ``upwards'' terminology to match the natural picture of ``mapping up'' with a conformal map $F:\mathbb{H} \rightarrow \mathbb{H}\bs \gamma$ which welds $x$ to $y$.}

We recall that downwards SLE$_\kappa$ with forcing, introduced in \cite{LSW}, frequently appeared in the early days of SLE  (\cite{DubedatMart,DubedatComm, SchrammWilson} for example), and continues to be ubiquitous (see \cite{AngHoldenSun} and \cite{MSImaginary} for recent examples, for instance).  Its $\kappa \rightarrow 0$ limit, SLE$_0$, has also appeared elsewhere: \cite[Theorem 2.2]{Alberts}, for example, shows that the Loewner flow of a geodesic multichord in the sense of Peltola and Wang \cite{PeltWang} is downwards SLE$_0$ with forcing of $+2$ at the critical points and $-4$ at the poles of an associated real rational function.\footnote{The setup in \cite{Alberts} varies slightly from ours in several senses; one is that the Loewner flow is defined on all of $\chat$, while we only consider the upper half plane.  Recasting the result of \cite{Alberts} in terms of our setting in $\mathbb{H}$ would place forcing of $+2$ at the critical points of the rational function and $-8$ at its poles in $\mathbb{H}$ (by conjugation-symmetry of the poles).}  Furthermore, many of deterministic drivers that provided fodder for early investigations into the chordal Loewner equation also turn out to be SLE$_0$ with forcing.  For instance, Krusell \cite{KrusellComm} has shown that the driving functions $\lambda_c(t) = c - c\sqrt{1-t}$, $c \in \mathbb{R}$, initially studied in \cite{KNK,LMR,Lind4,LoewnerCurvature}, all correspond to downwards SLE$_0(\rho)$ with forcing for appropriate choices of $\rho$ and forcing starting point.  

In a similar vein, we show in the appendix that upwards SLE$_0(\rho,\rho)$ has also appeared before, albeit under different guises.  
\begin{sumlemma}[Lemma \ref{Lemma:UpwardsSLE}]
    A straight line segment is upwards SLE$_0(-2,-2)$, while an orthogonal circular arc is upwards SLE$_0(-3,-3)$.
\end{sumlemma}
\noindent Combined with the EMW curves, we thus have a cohesive picture of upwards SLE$_0(\rho,\rho)$ for $\rho \in \{-4,-3,-2\}$.


\subsection{Asymptotic energy comparisons}\label{Sec:IntroAsymp}
\def\pict2scale{0.65}
\begin{figure}
    \centering
    \includegraphics[scale=0.13]{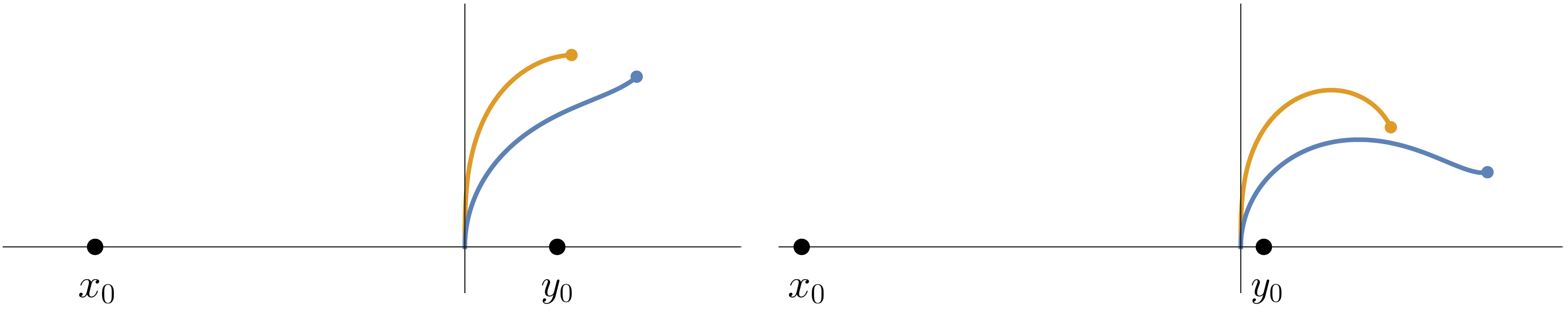}
    \caption{\small EMW and EMP curves in orange and blue, respectively, that weld the same black points $x_0<0<y_0$ to their base.  Note the increase in argument of the tip when moving from the Wang to the welding minimizer, which is expected from the monotonicity of \eqref{Eq:Energy1}.}
    \label{Fig:WangEMWSameWeld}
\end{figure}

Theorem \ref{SumTheorem:EMW}$(\ref{Thm:DistinctFamilies})$ says EMP and EMW curves never coincide, except in the trivial vertical line case.  It is then natural to ask how their energies compare. For instance, for fixed $\theta$, how far from optimal is the energy of the EMW curve with tip at $e^{i\theta}$?  Similarly, for fixed $x<0<y$, how much more energy does the EMP curve need to weld $x$ to $y$ compared to the EMW curve?  We find that the asymptotic answers are the same.
\begin{sumtheorem}[Theorem \ref{Thm:EMWWang}]
    \begin{align}\label{Lim:Asymptotics1}
    \lim_{\theta \rightarrow \pi/2} \frac{I(\text{EMW $\gamma$ to  $e^{i\theta}$})}{I(\text{EMP $\gamma$ to $e^{i\theta}$})} = \Big( \frac{4}{\pi} \Big)^2 = \lim_{-x/y \rightarrow 1} \frac{I(\text{EMP $\gamma$ welding $x$ to $y$})}{I(\text{EMW $\gamma$ welding $x$ to $y$})}.
\end{align}
\end{sumtheorem}
\noindent That both limits are the same is not obviously the case, as we discuss in Remark \ref{Remark:Reciprocals}.

We might also wonder how close generic smooth curves $\gamma$ are to minimizing energy at small scales.  For a small initial segment $\gamma[0,\delta]$, for instance, how does $I(\gamma[0,\delta])$ compare to the minimal energy needed to reach $\gamma(\delta)$?  Similarly, if $u(\delta)<0<v(\delta)$ will be welded by a smooth upwards driver $\xi$ at (small) time $\delta$, how close to energy-minimizing is $\xi$ on $[0,\delta]$?  We find that, as in \eqref{Lim:Asymptotics1}, both ratios are asymptotically the same.
\begin{sumtheorem}[Theorem \ref{Lemma:Inf98}]
If $\lambda$ and $\xi$ are sufficiently smooth with $\dot{\lambda}(0), \dot{\xi}(0) \neq 0$ (i.e. both curves are not locally hyperbolic geodesic segments), then
    \begin{align}\label{Lim:Asymptotics2}
    \lim_{\delta \rightarrow 0^+} \frac{I(\gamma[0,\delta])}{I(\text{EMP curve to }\gamma(\delta))} = \frac{9}{8} = \lim_{\delta \rightarrow 0^+} \frac{I(\xi[0,\delta])}{I(\text{EMW minimizer for }u(\delta), v(\delta))}.
\end{align}
\end{sumtheorem}
\noindent Since this result translates to any time $t$, we see smooth curves which are using energy are never locally minimizing, with constant rate of local energy inefficiency.


\subsection{Methods}\label{Intro:Methods} Our paper is philosophically similar to \cite{KNK} in that we use symmetries of the family in question to obtain systems of ODE's, that, given sufficient patience, we can explicitly solve.  The symmetries of each family also naturally yield the universality properties.  We use tools from quasiconformal mappings to obtain compactness and hence existence of minimizers. In the case of the welding minimizers, we also need a result from \cite{TVY} saying driver convergence implies welding convergence. 

We also use two ``even approach'' properties for finite-energy curves under the downwards and upwards Loewner flows.  The downwards-flow result, Lemma \ref{Lemma:WangEvenApproach}, says that the argument of the image  $G_t(\gamma(\tau))$ of the tip $\gamma(\tau)$ of a finite-energy curve under the centered downwards map $G_t$ approaches $\pi/2$ as $t \rightarrow \tau^-$.  For an example, compare the argument from the base of the curve to the blue tip in the left and center images of Figure \ref{Fig:LoewnerFlowB}.  The upwards-flow version, Lemma \ref{Lemma:EvenApproach}, says that if $x_0 < 0 <y_0$ will be welded by a finite-energy driver $\xi$ at some time $\tau$, then their images $x_t, y_t$ under the centered upwards-flow maps $h_t(z)-\xi(t)$ satisfy $-\frac{x_y}{y_t} \rightarrow 1$ as $t \rightarrow \tau^-$.  These properties enable us to see that both energy-minimization problems are local in nature, which yields ODE's that we can solve for the energy formulas.  We show in Example \ref{Eg:NotEquiv} that these properties are not equivalent.


For the asymptotic energy comparisons, we use expansions of the curve $\gamma$ in terms of $\dot{\lambda}(0)$ from \cite{LindTran}, and a new expansion \eqref{Eq:InfinitesimalWelding} of the welding in terms of $\dot{\xi}(0)$.  These enable us to say that \eqref{Lim:Asymptotics2} holds whenever $\lambda, \xi \in C^{\frac{3}{2} + \epsilon}$.  

\subsection{Outline of the paper}\label{Intro:Outline}
We recall relevant background in \S\ref{Sec:Background}, and in \S\ref{Sec:Wang} we state and prove our results for the EMP curves, culminating in Theorem \ref{Thm:Wang} (Theorem \ref{SumTheorem:Wang}).  In \S\ref{Sec:EMW} we similarly prove properties of the EMW curves, with Theorem \ref{Thm:EMW} (Theorem \ref{SumTheorem:EMW}) the main result.  We prove the energy comparisons in \S\ref{Sec:Compare}, and connect upwards SLE$_0(\rho,\rho)$ to the classical deterministic drivers in the first appendix \S\ref{Sec:Appendix}.  In the second appendix \S\ref{Sec:Appendix2} we compare the orthogonal circular arc family to the EMP and EMW families.

\subsection{Acknowledgements}
The author is grateful to Steffen Rohde for many illuminating discussions as well as for the idea to consider the welding minimizers and to deterministically re-derive Wang's formula \eqref{Eq:Energy1}.  He also thanks Don Marshall and Yilin Wang for multiple instructive discussions, as well as Ellen Krusell for introducing him to SLE$_\kappa(\rho)$ and for the suggestion that the EMW drivers may be related to SLE$_0(-4,-4)$.  He is grateful for feedback from Daniel Meyer and Joan Lind on an earlier draft, and feels indebted for the time investment made by the careful referee, whose comments and corrections have notably improved the manuscript. 

This research was partially conducted while the author was at Mathematical Sciences Research Institute during the spring 2022 semester and is thus partially supported by the National Science Foundation under Grant No. DMS-1928930.

\section{Background and notation}\label{Sec:Background}

\subsection{The Loewner equation}\label{Sec:BackgroundLoewner}
We start with a very brief review of the Loewner equation.  For other overviews, see, e.g., \cite{SLEGuide}, \cite{Lind4} or \cite[\S 2]{WangReverse}, and consult \cite{Beliaev}, \cite{Kemppainen} or \cite{Lawler} for more thorough treatments with proofs.

A simple curve $\gamma$ from $0$ to $z$ in $\mathbb{H}$ is a continuous injective map $\gamma:[0,T] \rightarrow \mathbb{H}$ such that $\gamma(0) \in \mathbb{R}$, $\gamma(0,T] \in \mathbb{H}$ and $\gamma(T) = z$.  (Note that we will often abuse notation with respect to $\gamma$ in two ways: first, by writing the image of an interval $I$ under $\gamma$ as $\gamma I$ instead of $\gamma(I)$, and secondly, by using $\gamma$ to denote its trace $\gamma([0,T])$.)  For such a curve and fixed $0 < t \leq T$, $g_t: \mathbb{H}\bs \gamma(0,t] \rightarrow \mathbb{H}$ is the unique conformal map satisfying
\begin{align}\label{Eq:LoewnerNormalize} 
    g_t(z) = z + \frac{\hcap(\gamma[0,t])}{z} + O\Big(\frac{1}{z^2}\Big), \qquad z \rightarrow \infty,
\end{align}
and $g_0(z):= z$.  Here $\hcap(\gamma[0,t])$ is, by definition, the coefficient of $1/z$ in this expansion.  All our curves $\gamma$ will be \emph{parametrized by capacity}, which means that 
\begin{align}\label{Eq:hcapNormalize}
    \hcap(\gamma[0,t]) =2t.
\end{align}
When we refer to the ``capacity time'' or ``Loewner time'' of a curve segment $\gamma$, we will always mean the half-plane capacity of $\gamma$ is $2t$, as above.   

Note that such a ``mapping down'' function $g_A$ as in \eqref{Eq:LoewnerNormalize} can be uniquely defined for a set $A$ whenever $\mathbb{H} \bs A$ is a simply-connected domain, and the \emph{half-plane capacity} $\hcap(A)$ is similarly defined.  We also recall that $\hcap$ is strictly monotone: if $A_1 \subsetneq A_2$ are such sets, then $\hcap(A_1) < \hcap(A_2)$ \cite[\S A.4]{SLEGuide}, and thus it is always possible to parametrize growing curves $\gamma$ by capacity.  

It is known that, for fixed $z$, $g_t(z)$ satisfies \emph{Loewner's differential equation}
\begin{align}\label{Eq:BackgroundLoewnerEq}
    \dot{g}_t(z) = \frac{2}{g_t(z) -\lambda(t)}, \qquad g_0(z) = z,
\end{align}
where $\lambda(t)$ is \emph{driving function} of $\gamma$, which is to say, the continuous, real-valued image of the tip $\gamma(t)$ by the (extension) of $g_t$.  Each point $z \in \mathbb{H}$ has a supremal time $\tau_z$ such the flow of $z$ under \eqref{Eq:BackgroundLoewnerEq} is defined on $[0,\tau_z]$.  We recover $\gamma(0,t]$ by taking the \emph{hull of the Loewner flow} $\{\, z \in \mathbb{H} \;: \; \tau_z \leq t\, \}$.    

There are several useful variations of the Loewner flow.  We write $G_t(z) := g_t(z) -\lambda(t)$ for the centered mapping-down function (where the tip $\gamma(t)$ always maps to zero), and define $f_t := g_t^{-1}$ and $F_t := G_t^{-1}$.  We reverse the direction of the flow via the maps $h_t$ which satisfy
\begin{align}\label{Eq:LoewnerEqUp}
    \dot{h}_t(z) = \frac{-2}{h_t(z) -\xi(t)}, \qquad h_0(z) = z.
\end{align}
If $\xi(t) = \lambda(T-t)$ is the reversal of $\lambda$, then it is easy to see that $h_t^{-1} \circ g_{T-t} \equiv g_T$; that is, $h_t = g_{T-t}\circ g_T^{-1}$ is the conformal map from $\mathbb{H}$ to $\mathbb{H}\bs g_{T-t}(\gamma[T-t,T])$ satisfying $h_t(z) = z - 2t/z + O(1/z^2)$ as $z \rightarrow \infty$.  We frequently use the backwards driver which is shifted to start at zero, $\xi(t) = \lambda(T-t) - \lambda(T)$.

If $\gamma$ is not simple or $\gamma(0,T] \cap \mathbb{R} \neq \emptyset$, then $g_t$ is defined as the map $\mathbb{H}\bs \Fill(\gamma(0,t]) \rightarrow \mathbb{H}$ satisfying the same normalization \eqref{Eq:LoewnerNormalize}, where $\mathbb{H}\bs \Fill(\gamma(0,T])$ is the unbounded connected component of the complement of $\gamma(0,T]$.

The simplest example of a driving function $\lambda(t)$ is for the vertical line segment $\gamma_y = [0,iy]$, $y>0$, where a map $\nH \bs \gamma_y \rightarrow \nH$ which is the identity at infinity to first order is
\begin{align*}
    \sqrt{z^2+y^2} = z+\frac{y^2/2}{z} + O(z^{-3}), \qquad z \rightarrow \infty.
\end{align*}
Hence the capacity parametrization for the imaginary axis is $\gamma(t) = 2i\sqrt{t}$, and the driving function is $\lambda(t) \equiv 0$, the image of $\gamma(t)$.  For further examples of driving functions, see \cite{KNK}, \cite[\S4.1]{Lawler}, \cite{LoewnerCurvature}, or Theorems \ref{Thm:Wang}($\ref{Thm:WangDriver}$) and \ref{Thm:EMW}($\ref{Thm:EMWDriver}$) below.

If we replace $\gamma$ by a scaled copy $r\gamma$ of itself, $r>0$, then to maintain capacity parametrization $r\gamma$ must be parametrized as $t \mapsto r\gamma(t/r^2)$, as one can see from \eqref{Eq:LoewnerNormalize}.  Thus the driving function of $r\gamma$ is 
\begin{align}\label{Eq:ScaledDriver}
    r\lambda(t/r^2).
\end{align}

\subsection{The Loewner energy}\label{Sec:BackgroundLoewnerEnergy}

The Loewner energy of a curve $\gamma$ is the Dirichlet energy of its driving function $\lambda$, which we formally define through the following difference quotient.  Let $\Pi[0,T]$ be the collection of all partitions of $[0,T]$.
\begin{definition}
    The \emph{Loewner energy} $I(\gamma)$ of a curve $\gamma$ on $[0,T]$ (or $[0,T)$ if $T = \infty$) with downwards driving function $\lambda$ is
    \begin{align}\label{Def:LoewnerEnergy}
        I(\gamma) := \sup_{\mathcal{P} \in \Pi[0,T]} \sum_{j=1}^n \frac{(\lambda(t_j)-\lambda(t_{j-1}))^2}{2(t_j-t_{j-1})}.
    \end{align}
\end{definition}
\noindent We may alternatively write $I(\lambda)$ for $I(\gamma)$, or even $I(\xi)$, as it is evident that the supremum in \eqref{Def:LoewnerEnergy} is not changed if we replace $\lambda$ with its reversal $\xi(t) = \lambda(T-t) -\lambda(T)$.  We write $I_T$ if we need to emphasize the interval $[0,T]$ under consideration.

The factor of 2 in the denominator in \eqref{Def:LoewnerEnergy} is a normalization choice by Wang \cite{WangReverse} in order to have the Loewner energy be the good-rate function for SLE$_\kappa$ as $\kappa \rightarrow 0^+$ \cite{PeltWang}.  The supremum in \eqref{Def:LoewnerEnergy} is because the sum is monotonic in the partition, and from analysis we have that a driver $\lambda$ with $I_T(\lambda)<\infty$ belongs to the \emph{Dirichlet space} on $[0,T]$, which is to say, $\lambda$ is absolutely continuous, $\dot{\lambda}\in L^2([0,T])$ and
\begin{align}\label{Eq:LoewnerEnergy}
    I_T(\lambda) = \frac{1}{2}\int_0^T \dot{\lambda}(t)^2dt.
\end{align}
See, for instance, \cite[\S 1.4]{Peres}.  By absolute continuity, $I(\lambda) = 0$ if and only if $\lambda \equiv 0$, and thus we may also view the Loewner energy as a measurement of the deviation of the curve from a hyperbolic geodesic (i.e., the curve generated by the zero driver).  
The Loewner energy enjoys a number of properties which will be of use to us, including:
\begin{enumerate}[$(i)$]
    \item Conformal and anti-conformal invariance: if $r >0$ and $x \in \mathbb{R}$, then $I(r\gamma+x) = I(\gamma)$, as one readily sees from \eqref{Eq:ScaledDriver}.  If $\gamma$ is a curve from 0 to $\infty$, then its image $-1/\gamma$ under the automorphism $z \mapsto -1/z$ of $\mathbb{H}$ also has the same energy, $I(\gamma) = I(-1/\gamma)$ \cite{WangReverse}.  That is, $I$ is invariant under $PSL(2,\mathbb{R})$, the automorphism group of $\mathbb{H}$.
    
    We also have $I(\gamma) = I(-\overline{\gamma})$, where $-\overline{\gamma}$ is the reflection of $\gamma$ across the imaginary axis, since if $\gamma$ is driven by $\lambda$, then $-\overline{\gamma}$ is driven by $-\lambda$.  While trivial, this observation is still useful, in that by symmetry it allows us to only consider minimizers with argument tip $\theta \in (0,\pi/2]$, for instance.
    
    \item $I(\cdot)$ is lower semi-continuous with respect to the sup norm on drivers: If $\lambda_n \rightarrow \lambda$ uniformly on $[0,T]$, then
\begin{align}\label{Ineq:LoewnerEnergyLSC}
    \liminf_{n \rightarrow \infty} I_T(\lambda_n) \geq I_T(\lambda),
\end{align}
as follows from the difference quotient expression \eqref{Def:LoewnerEnergy} (see \cite[\S 2.2]{WangReverse}).

    \item\label{LE:Compactness} The collection of drivers on $[0,T]$ with energy bounded by $C<\infty$ is compact.  Indeed, for any such driver $\lambda$,
\begin{align}\label{Eq:LoewnerHolder}
    |\lambda(t_2) - \lambda(t_1)| \leq \int_{t_1}^{t_2} |\dot{\lambda}(t)|dt \leq \sqrt{2C} \sqrt{t_2-t_1}
\end{align}
by the Cauchy-Schwarz inequality.  In particular, the family is bounded and equicontinuous on $[0,T]$, and so by Arzela-Ascoli is precompact in the uniform norm on $[0,T]$ (recall another name for the embedding $W^{1,2}([0,T]) \hookrightarrow C^{1/2}([0,T])$ is Morrey's inequality \cite[\S4.5.3]{FineProp}). Lower semi-continuity \eqref{Ineq:LoewnerEnergyLSC} then yields compactness.

    \item It follows that finite-energy drivers $\lambda$ generate curves $\gamma$ via the Loewner equation that are quasi-arcs that do not meet $\mathbb{R}$ tangentially.  Indeed, from \eqref{Eq:LoewnerHolder} we see that the $1/2$-\Hol norm of $\lambda$ is locally small and so by \cite[Proof of Thm. 4.1]{LMR}, $\lambda$ generates such quasi-arcs on small scales, and thus also on finite intervals.  Wang then extended this argument to include infinite time intervals \cite[Prop. 2.1]{WangReverse}.  

    \item\label{Lemma:CurvesCompact} One can upgrade the compactness of drivers of bounded energy to compactness of \emph{curves}.  That is, for any $M<\infty$ and $T<\infty$, the collection
    \begin{align}\label{Set:MBoundedEnergy}
        \{ \gamma:[0,T] \rightarrow \mathbb{H}\cup\{0\} \, : \, \gamma(0)=0, \gamma((0,T]) \subset \mathbb{H}, I_T(\gamma)\leq M \}
    \end{align}
    of capacity-parametrized curves, equipped with the sup norm, is compact in $C([0,T])$.  See \cite[Lemma 3.4]{Guskov} or consider the following argument: for a given sequence $\{\gamma_n\}$, the driving functions have a convergence subsequence in $C([0,T])$ by property ($\ref{LE:Compactness}$).  The corresponding subsequence of curves is also equicontinuous and bounded \cite[Theorem 2$(iii)$]{FrizShekhar}, and so has a convergence subsequence.  Since both the drivers and the curves converge, the limiting curve must correspond to the limiting driver \cite[Lemma 4.2]{LMR}.  Thus the collection \eqref{Set:MBoundedEnergy} is sequentially compact, as claimed.
\end{enumerate}

Much more could be said about the Loewner energy, but we offer the following, undoubtedly incomplete, concluding remarks.  Rohde and Wang \cite{RohdeWang} generalized the energy to loops via a limiting procedure, and this appears to be the most natural setting.  Indeed, Wang subsequently showed \cite{WangEquiv} that the loop Loewner energy can be explicitly computed via $L^2$-norms of the pre-Schwarzian derivatives of the Riemann maps to the two sides of the loop.  That description led to Bishop's ``square-summable curvature'' characterizations \cite{Bishop}, and also to connections to \Teich theory and geometry, as Takhtajan and Teo had earlier shown that Wang's conformal-map formula is (a multiple of) the K\"{a}hler potential for the Weil-Petersson metric on universal \Teich space, and that Weil-Petersson quasicircles are precisely those for which the expression is finite \cite{TT}.  That is, finite-energy loops are precisely the closure of smooth loops in the Weil-Petersson metric.  See \cite{WangEquiv} for details.  
Initial interest in studying the energy was stochastic in nature; Wang showed it is the large-deviations good-rate function for SLE$_\kappa$ as $\kappa \rightarrow 0^+$ \cite{WangReverse} (see also \cite{PeltWang} for an extension to the SLE multichord setting), and Tran and Yuan \cite{Yizheng} also showed the closure of finite-energy curves is the topological support of SLE$_\kappa$, for any $\kappa \geq 0$.  In brief, the Loewner energy occupies a fascinating crossroads between probability theory, univalent function theory, \Teich theory, geometric measure theory, and hyperbolic geometry.\footnote{See also \cite{Krusell, PeltWang} for generalizations to ensembles of multiple curves, and \cite{Krusell} for a generalization which, informally, is to single curves $\gamma$ ``conditioned to pass through a given point'' $z_0 \in \mathbb{H}$.}


\subsection{SLE$_\kappa(\rho_1,\ldots, \rho_n)$ and its reversal.}\label{Sec:BackgroundSLE}
We recall that (chordal, downwards) \emph{SLE with forcing SLE$_\kappa(\rho_1,\ldots, \rho_n)$ starting from $(\lambda_0, U^1_0, \ldots, U^n_0)$} is the Loewner flow generated by driver $\lambda_t$ whose motion is defined by Brownian motion and interactions from particles $U^1_t, \ldots, U^n_t \in \overline{\mathbb{H}}$, the closure of $\mathbb{H}$, via the system of SDE's
\begin{align}\label{Eq:SLErhoSDE}
    d\lambda_t = \sqrt{\kappa}dB_t + \sum_{j=1}^n \Real \frac{\rho_j}{\lambda_t - U^j_t}dt, \qquad dU^j_t = \frac{2}{U^j_t - \lambda_t} dt, \qquad j=1,\ldots, n,
\end{align}
with initial conditions given by the $(n+1)$-tuple $(\lambda_0, U^1_0,\ldots, U^n_0)$.  Here each $\rho_j \in \mathbb{R}$, $B_t$ is a standard Brownian motion, and the process is defined until the first time $\tau$ such that, for some $j$, 
\begin{align*}
    \inf_{0 \leq t <\tau} |\lambda_t - U^j_t| = 0.
\end{align*}
So in SLE with forcing the dynamics of $\lambda_t$ is influenced by the location of the \emph{force points} $U^1_t, \ldots, U^n_t$, but all other points evolve according to the normal Loewner flow generated by $\lambda_t$.  The connection to energy-minimizers is when $\kappa =0$, in which case \eqref{Eq:SLErhoSDE} becomes a deterministic system of ODE's.

For our purposes it will be convenient to occasionally reverse the direction of the flow, and we define \emph{upwards SLE$_\kappa(\rho_1,\ldots, \rho_n)$ starting from $(\xi_0, V^1_0, \ldots, V^n_0)$} to be the process given by flow \eqref{Eq:LoewnerEqUp} where $\xi_t$ and the force points $V^1_t, \ldots, V^n_t$ satisfy
\begin{align*}
    d\xi_t = \sqrt{\kappa}dB_t + \sum_{j=1}^n \Real \frac{-\rho_j}{\xi_t - V^j_t}dt, \qquad dV^j_t = \frac{-2}{V^j_t - \xi_t} dt, \qquad j=1,\ldots, n,
\end{align*}
with initial conditions $(\xi_0, V^1_0, \ldots, V^n_0) \in \mathbb{R} \times \overline{\mathbb{H}}^{n}$.  

In this paper we only consider deterministic $\kappa=0$ processes, and in this case we can exchange the downwards and  upwards points of view when convenient by reversing the direction of the flow and keeping track of the location of the forcing points.


\subsection{Quasiconformal mappings}

We recall several properties of two-dimensional quasiconformal mappings.  For proofs and more details, see \cite{Astala}, \cite{LehtoUFT} or \cite{LehtoQC}. Let $\Omega_1, \Omega_2$ be domains in the Riemann sphere $\chat$.  A \emph{$K$-quasiconformal map} $f: \Omega_1 \rightarrow \Omega_2$ is an orientation-preserving homeomorphism which is absolutely continuous on a.e. line parallel to the axes and differentiable at Lebesgue-almost every $z \in \Omega_1$, and whose ``complex directional derivatives''
\begin{align*}
	\partial_\alpha f (z) := \lim_{r \rightarrow 0} \frac{f(z + re^{i \alpha})-f(z)}{re^{i\alpha}}
\end{align*}
satisfy
\begin{align}\label{Eq:QCDirDerivBounds}
	\max_\alpha |\partial_\alpha f(z)| \leq K \min_\alpha |\partial_\alpha f(z)|
\end{align}
at points $z$ of differentiability.  More succinctly, $f$ is $K$-quasiconformal if $f \in W^{1,2}_{\text{loc}}(\Omega_1)$ with derivatives satisfying \eqref{Eq:QCDirDerivBounds} almost everywhere.  Conformal maps are $K$-quasiconformal with $K=1$.  

Our interest in quasiconformal maps stems from the fact that finite-energy arcs $\gamma \subset \mathbb{H} \cup \{0\}$ are images $f([0,i])$ of the imaginary axis segment $[0,i]$ under a $K=K(I(\gamma))$-quasiconformal map $f:\mathbb{H} \rightarrow \mathbb{H}$ which fixes $0$ and $\infty$ \cite[Prop. 2.1]{WangReverse}. Furthermore, families of quasiconformal maps have nice compactness properties, as expressed in the following two propositions.   We will use these in tandem with the lower semi-continuity of energy \eqref{Ineq:LoewnerEnergyLSC} to obtain Loewner-energy minimizers.

\begin{proposition}\cite[Thm. 2.1]{LehtoUFT}
    A family $F$ of $K$-quasiconformal mappings of $\Omega_1 \subset \chat$ is normal in the spherical metric if there exists  three distinct points $z_1,z_2,z_3 \in \Omega_1$ and $\epsilon >0$ such that for any $f \in F$,  $d_{\hat{\nC}}(f(z_j),f(z_k)) > \epsilon$, whenever $j \neq k$, $j,k=1,2,3$.
\end{proposition}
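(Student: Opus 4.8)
\textit{Proof proposal.} The plan is to deduce normality from the Arzel\`a--Ascoli theorem. Since the target sphere $\chat$ is compact, every value $f(z)$ automatically lies in a compact set, so the only thing to establish is that $F$ is equicontinuous in the spherical metric on compact subsets of $\Omega_1$. The entire content therefore reduces to a single uniform modulus-of-continuity estimate for the normalized family: I would show that for each compact $E \subset \Omega_1$ there is a function $\eta = \eta_{K,\epsilon,E}$ with $\eta(s) \to 0$ as $s \to 0^+$ such that
\begin{align*}
    d_{\hat{\nC}}\big(f(z), f(w)\big) \leq \eta\big(d_{\hat{\nC}}(z,w)\big) \qquad \text{for all } f \in F,\ z,w \in E.
\end{align*}
The dependence of $\eta$ only on $K$, $\epsilon$, and $E$ (and not on the individual $f$) is precisely what makes the family equicontinuous.

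The main tool is the conformal modulus of ring (doubly connected) domains together with its quasi-invariance under $f$: because each $f$ is $K$-quasiconformal, it distorts the modulus of any ring $R \subset \Omega_1$ by a factor of at most $K$, so that $\operatorname{mod}(f(R)) \geq \operatorname{mod}(R)/K$. First I would fix the compact $E$ and, using compactness of $E$ and the distinctness of the three anchor points $z_1,z_2,z_3$, choose $R_0 > 0$ small enough that for every $z \in E$ the spherical ball $B(z,R_0)$ lies in $\Omega_1$ and excludes at least two of the $z_j$. Then, given $z,w \in E$ with $s := d_{\hat{\nC}}(z,w)$ small, the round ring $A := \{\zeta : s < d_{\hat{\nC}}(\zeta,z) < R_0\}$ separates the continuum $\{z,w\}$ from two anchors, say $z_j, z_k$, and its modulus is comparable to $\log(R_0/s)$, which tends to $\infty$ as $s \to 0^+$.

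Applying $f$, the image ring $f(A)$ still separates $\{f(z),f(w)\}$ from $\{f(z_j),f(z_k)\}$, and $\operatorname{mod}(f(A)) \geq \operatorname{mod}(A)/K$ is likewise large. Here is exactly where the three-point hypothesis does its work: by assumption $d_{\hat{\nC}}(f(z_j),f(z_k)) \geq \epsilon$, so the outer complementary continuum of $f(A)$ has definite spherical size, which the large-modulus ring must enclose. The \Teich modulus estimate --- comparison of an arbitrary separating ring with the extremal \Teich ring, whose modulus function is monotone --- then forces the inner complementary continuum $\{f(z),f(w)\}$ to have spherical diameter at most some $\eta(s)$ with $\eta(s) \to 0$ as $\operatorname{mod}(f(A)) \to \infty$, hence as $s \to 0^+$. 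This is the required uniform estimate, and Arzel\`a--Ascoli then gives normality.

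The main obstacle is the quantitative geometric step at the end: converting ``$f(A)$ has large modulus and its outer continuum has spherical size $\geq \epsilon$'' into an explicit bound on the diameter of the inner continuum. This rests on the extremal properties and monotonicity of the \Teich and Gr\"{o}tzsch ring functions, carried out in the spherical rather than Euclidean metric, since both the anchors and the values $f(z)$ may lie near $\infty$. A secondary technical point is the quasi-invariance inequality $\operatorname{mod}(f(A)) \geq \operatorname{mod}(A)/K$ itself, which is where the analytic hypotheses on $f$ --- membership in $W^{1,2}_{\text{loc}}$ with the dilatation bound \eqref{Eq:QCDirDerivBounds} --- are genuinely consumed, via the extremal-length (length--area) characterization of modulus; this is standard but must be invoked honestly.
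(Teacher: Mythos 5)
The paper does not prove this proposition; it is quoted verbatim from Lehto with a citation and used as a black box, so there is no in-paper argument to compare against. Your proposal is the standard textbook proof (essentially the one in Lehto's own text and in Lehto--Virtanen/V\"{a}is\"{a}l\"{a}): since $\chat$ is compact, normality reduces to spherical equicontinuity on compacta, which you extract from the $K$-quasi-invariance of ring moduli plus the \Teich-type estimate that a ring of large modulus whose outer complementary continuum has spherical diameter at least $\epsilon$ must have an inner complementary continuum of small spherical diameter --- and the three-point separation hypothesis is consumed exactly where you say it is, in guaranteeing that definite outer size. The outline is correct and correctly locates the two nontrivial ingredients (the length--area proof of $\operatorname{mod}(f(A)) \geq \operatorname{mod}(A)/K$, and the spherical version of the Gr\"{o}tzsch/\Teich modulus bounds). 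Two minor points of hygiene: take $R_0$ smaller than both $\operatorname{dist}(E,\partial\Omega_1)$ and half the minimal pairwise spherical distance among $z_1,z_2,z_3$, so that $A \subset \Omega_1$ (needed for the dilatation bound to apply at all) and every ball $B(z,R_0)$ omits at least two anchors; and since $w$ sits on the inner boundary circle $d_{\hat{\nC}}(\zeta,z)=s$, use the closed ball $\overline{B}(z,s)$ as the inner continuum so that $f(z)$ and $f(w)$ genuinely lie in one complementary component of $f(A)$.
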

\noindent Furthermore, the subsequential locally-uniform limits are also either $K$-quasiconformal or constant, a generalization of the Hurwitz theorem for conformal mapping.
\begin{proposition}\cite[Thm. 2.2, 2.3]{LehtoUFT}
	Let $f_n: \Omega_1 \rightarrow \Omega_2$ be a sequence of $K$-quasiconformal maps which converges locally uniformly to $f$.  Then $f$ is either $K$-quasiconformal or $f$ maps all of $\Omega_1$ to a single boundary point of $\Omega_2$.
\end{proposition}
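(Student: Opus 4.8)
The plan is to separate the statement into a degeneracy dichotomy and a distortion bound, and to carry both through the limit using the geometric (modulus) description of quasiconformality rather than the analytic one. Throughout set $k := (K-1)/(K+1) < 1$, so each $f_n$ satisfies $|\bar\partial f_n| \le k\,|\partial f_n|$ a.e. First I would record the a priori estimates that are stable under locally uniform limits. Since $f_n \to f$ locally uniformly, on any compact $E \Subset \Omega_1$ the images $f_n(E)$ lie in one fixed compact subset of $\chat$, so the quasiconformal (Mori-type) Hölder estimate furnishes a uniform modulus of continuity; hence $f$ is continuous and locally $\tfrac1K$-Hölder. For a Sobolev bound I would use $J_{f_n} = |\partial f_n|^2 - |\bar\partial f_n|^2 \ge (1-k^2)|\partial f_n|^2$ together with $\int_E J_{f_n} = \mathrm{Area}(f_n(E))$, which is uniformly bounded; thus $\{f_n\}$ is bounded in $W^{1,2}(E)$, and after passing to a subsequence $\partial f_n \rightharpoonup \partial f$, $\bar\partial f_n \rightharpoonup \bar\partial f$ weakly in $L^2_{\mathrm{loc}}$ (the weak limits are the distributional derivatives of $f$ because $f_n \to f$ in distributions), so $f \in W^{1,2}_{\mathrm{loc}}(\Omega_1)$.

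Next I would establish the dichotomy via the two-sided modulus bounds $\tfrac1K\,\mathrm{mod}(A) \le \mathrm{mod}(f_n(A)) \le K\,\mathrm{mod}(A)$ that every $K$-qc homeomorphism satisfies on rings $A \Subset \Omega_1$. Suppose $f$ is non-constant, so there is $c$ with $f(c) =: w_1 \neq w_0$ for some value $w_0$; if also $f(a) = f(b) = w_0$ with $a \neq b$, take a round ring $A$ about $a$ whose inner disk omits $b$ and $c$. Then $f_n(A)$ separates $f_n(a)$ from the outer component holding $f_n(b) \to w_0$ and $f_n(c) \to w_1$, so the two complementary continua have distance tending to $0$ while the outer one has $\diam \ge |w_0-w_1|/2 > 0$; in the limit this pinches $\mathrm{mod}(f_n(A))$ either to $0$ (violating the lower bound) or, if the inner component collapses to the point $w_0$, to $\infty$ (violating the upper bound). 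Either way we contradict the bounds, forcing $a=b$, so $f$ is injective and hence a homeomorphism onto its image. Granting this, for any quadrilateral $Q$ with $\bar Q \subset \Omega_1$ the uniform convergence $f_n \to f$ on $\bar Q$ together with injectivity of $f$ gives kernel convergence $f_n(Q) \to f(Q)$, and the continuity of the modulus under such convergence (Lehto--Virtanen) yields $\mathrm{mod}(f(Q)) = \lim_n \mathrm{mod}(f_n(Q)) \le K\,\mathrm{mod}(Q)$, so $f$ is $K$-quasiconformal. To pin down the degenerate case as a boundary value, I would apply the \emph{lower} modulus bound to the inverses: if $f \equiv w_0$ with $w_0 \in \Omega_2$ interior, then for a small round ring $S$ about $w_0$ the preimage $f_n^{-1}(S)$ gets squeezed into a thin collar of $\partial\Omega_1$ (its inner complementary component exhausts $\Omega_1$ because interior compacta eventually map into the inner disk), forcing $\mathrm{mod}(f_n^{-1}(S)) \to 0$ and contradicting $\mathrm{mod}(f_n^{-1}(S)) \ge \tfrac1K\,\mathrm{mod}(S) > 0$; hence $w_0 \in \partial\Omega_2$.

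The main obstacle is precisely the step of passing the distortion bound through the limit, and it is why I would route through moduli rather than the Beltrami coefficients $\mu_n = \bar\partial f_n / \partial f_n$. Although weak-$*$ compactness gives $\mu_n \rightharpoonup^* \mu$ with $\|\mu\|_\infty \le k$, one cannot conclude $\bar\partial f = \mu\,\partial f$: the identity $\bar\partial f_n = \mu_n\,\partial f_n$ is a product of two merely weakly convergent sequences, and the pointwise constraint $|\bar\partial f| \le k|\partial f|$ cuts out a non-convex subset of $\mathbb{C}^2$, so naive weak lower semicontinuity does not apply. The geometric definition sidesteps this entirely, since the bound $\mathrm{mod}(f(Q)) \le K\,\mathrm{mod}(Q)$ is linear in the modulus functional and survives the limit by the continuity-of-modulus lemma; the only genuinely analytic inputs then become that lemma and the modulus degeneration estimates used in the dichotomy, both of which are standard consequences of the Teichmüller/Grötzsch extremal-length theory.
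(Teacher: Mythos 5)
First, a point of comparison: the paper offers no proof of this proposition at all --- it is quoted from Lehto's book --- so there is no internal argument to measure yours against. Your overall route is the standard one and is the right choice: working with the geometric (modulus) characterization of quasiconformality so that the distortion inequality survives the limit, rather than trying to pass $\bar\partial f_n=\mu_n\,\partial f_n$ through two merely weakly convergent factors. Your non-degenerate step (kernel convergence of quadrilaterals plus continuity of the modulus) and your degenerate step (pinching $f_n^{-1}(S)$ against $\partial\Omega_1$) are sound, granted that the $f_n$ are onto $\Omega_2$; note that without surjectivity the statement itself fails ($f_n(z)=z/n$ on the disk converges to an \emph{interior} constant) and $f_n^{-1}(S)$ need not be a ring domain, so you should make that hypothesis explicit (it holds in the paper's application, where the $q_n$ are self-maps of $\mathbb{H}$).

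The genuine gap is in the injectivity dichotomy. From the constraints you list --- $\dist(E_n,F_n)\to 0$, $\diam(F_n)$ bounded below, $E_n$ shrinking to the point $w_0$ --- it does \emph{not} follow that the modulus of $f_n(A)$ tends to $0$ or to $\infty$. Take $E_n=\overline{D}(w_0,\epsilon_n)$ and $F_n\supset\{|w-w_0|\ge 2\epsilon_n\}$ reaching out to $w_1$: every hypothesis holds, the inner continuum collapses to $w_0$, yet the separating ring $\{\epsilon_n<|w-w_0|<2\epsilon_n\}$ has modulus $\tfrac{\log 2}{2\pi}$ for all $n$. When the inner continuum collapses at a rate comparable to its distance from the outer one, the modulus stays bounded between two positive constants and neither of your bounds is violated. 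The standard repair is quantitative rather than dichotomous: fix $R$ with $b,c\notin\overline{D}(a,R)\subset\Omega_1$, apply the lower bound $\tfrac{1}{2\pi K}\log(R/r)$ to $A_r=\{r<|z-a|<R\}$, and bound the modulus of $f_n(A_r)$ above by a Teichm\"{u}ller-type estimate in terms of $|f_n(a)-f_n(b)|/\diam f_n(\overline{D}(a,r))$. Since $|f_n(a)-f_n(b)|\to 0$, this forces $\diam f_n(\overline{D}(a,r))\to 0$, i.e.\ $f$ is \emph{constant} on $D(a,R)$ --- which is not yet a contradiction. A second, topological step is then required: the set where $f$ is locally constant equal to $w_0$ is open, and running the same estimate at a boundary point of the level set $f^{-1}(w_0)$ (using any nearby second preimage of $w_0$ together with $c$) shows it is also closed in $\Omega_1$, whence $f\equiv w_0$, contradicting the existence of $c$. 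A single fixed ring cannot deliver the contradiction in one step.
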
  

\section{The energy minimizers to a point (EMP) curves}\label{Sec:Wang}



We begin by considering the minimization question ($\ref{Q:HMinimizer}$): what is the infimal energy needed for a curve to go from 0 to $r_0e^{i\theta_0}$ in $\mathbb{H}$, and what is the nature of $\gamma$ which achieve the minimum, if this is possible?  Our deterministic answer is found below in Theorem \ref{Thm:Wang}, but is prefaced by the following two lemmas.  The latter respectively state that minimizers exist, and that, under the centered downwards Loewner flow, the tip of a finite-energy curve tends towards the imaginary axis.
\begin{lemma}\label{Lemma:AngleEnergyMinimizerExists}
    For any $z \in \mathbb{H}$, there exists a simple curve $\gamma_z$ from 0 to $z$ in $\mathbb{H}$ such that 
    \begin{align}\label{Eq:AngleEnergyInf}
        I(\gamma_z) = \inf_{\gamma \in \Gamma_z} I(\gamma),
    \end{align}
    where $\Gamma_z$ is the collection of all curves in $\mathbb{H}$ from 0 to $z$.
\end{lemma}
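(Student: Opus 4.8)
The plan is a direct compactness argument in the spirit of the quasiconformal machinery just recalled: extract a convergent subsequence of a minimizing sequence using the quasiconformal representation of finite-energy arcs, and then conclude by lower semi-continuity of the energy. First I would check that the infimum $m := \inf_{\gamma \in \Gamma_z} I(\gamma)$ is finite, by exhibiting a single finite-energy competitor. Any smooth simple arc from $0$ to $z$ (for instance the sub-arc from $0$ to $z$ of the Euclidean circle through these two points centered on $\mathbb{R}$, which lies in $\mathbb{H}\cup\{0\}$) has a driving function of finite energy on its capacity interval, by standard Loewner regularity. Hence $m<\infty$, and I may take a minimizing sequence $\gamma_n \in \Gamma_z$ with $I(\gamma_n)\downarrow m$, discarding finitely many terms so that $I(\gamma_n)\le m+1=:C$ for all $n$.

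For the compactness step, by \cite[Prop. 2.1]{WangReverse} each $\gamma_n$ is the image $f_n([0,i])$ of the imaginary-axis segment $[0,i]$ under a $K$-quasiconformal map $f_n:\mathbb{H}\to\mathbb{H}$ fixing $0$ and $\infty$, with $K=K(C)$ depending only on the energy bound; since $0\mapsto 0$ is the base and $i\mapsto$ the tip, we have $f_n(i)=z$ for every $n$. Thus $\{f_n\}$ is a family of $K$-quasiconformal maps sending the three fixed points $0,i,\infty$ to the three fixed points $0,z,\infty$, which are pairwise at positive spherical distance. The normality criterion \cite[Thm. 2.1]{LehtoUFT} then yields a subsequence $f_{n_k}\to f$ locally uniformly, and the Hurwitz-type result \cite[Thm. 2.2, 2.3]{LehtoUFT} shows $f$ is $K$-quasiconformal, since it cannot be constant as it takes the three distinct values $0,z,\infty$. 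Consequently $\gamma := f([0,i])$ is a simple arc with $f(0)=0\in\mathbb{R}$, $f((0,i])\subset\mathbb{H}$ and $f(i)=z$, so $\gamma\in\Gamma_z$.

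It remains to pass from this geometric convergence to the energy. The local uniform convergence $f_{n_k}\to f$ forces Carath\'eodory convergence of the hulls $\Fill(\gamma_{n_k})\to\Fill(\gamma)$, hence convergence of half-plane capacities, $\hcap(\gamma_{n_k})\to\hcap(\gamma)$; writing $T_{n_k}=\tfrac12\hcap(\gamma_{n_k})$ and $T_*=\tfrac12\hcap(\gamma)\in(0,\infty)$ (nondegenerate since $\gamma$ reaches $z\in\mathbb{H}$), we get $T_{n_k}\to T_*$. By continuity of the Loewner correspondence, Carath\'eodory convergence of the capacity-parametrized hulls yields uniform convergence of the driving functions $\lambda_{n_k}\to\lambda$, where $\lambda$ drives $\gamma$ on $[0,T_*]$. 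The lower semi-continuity estimate \eqref{Ineq:LoewnerEnergyLSC} then gives
\begin{align*}
    I(\gamma) = I(\lambda) \le \liminf_{k\to\infty} I(\lambda_{n_k}) = \liminf_{k\to\infty} I(\gamma_{n_k}) = m.
\end{align*}
Since $\gamma\in\Gamma_z$ also forces $I(\gamma)\ge m$, we conclude $I(\gamma)=m$, which is \eqref{Eq:AngleEnergyInf}.

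I expect this last step to be the main obstacle. The quasiconformal propositions deliver compactness in the geometric (map and curve) category, whereas lower semi-continuity of the energy is formulated for driving functions. Bridging the two requires that Carath\'eodory convergence of the curves — complicated here by the fact that the competitors have \emph{varying} capacity times $T_n$ rather than a common terminal time — implies uniform convergence of the associated drivers. This continuity of the Loewner transform is the delicate point one must justify; the extraction of the limit curve itself is comparatively routine once the three-point normality is in place.
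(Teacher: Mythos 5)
Your overall architecture --- finite competitor, quasiconformal representation with the three-point normalization $0,i,\infty \mapsto 0,z,\infty$, normality plus the Hurwitz-type theorem to extract a limiting $K$-quasiconformal map and hence a limit curve $\gamma \in \Gamma_z$, then lower semi-continuity --- is exactly the paper's. The gap is the step you yourself flag: the assertion that \Cara convergence of the capacity-parametrized hulls yields uniform convergence of the driving functions. This is not a standard continuity property of the Loewner transform and does not hold as stated: \Cara (kernel) convergence of the complementary domains controls the mapping-down functions $g_t^{n}$ only locally uniformly in the \emph{interior}, whereas the driver $\lambda_n(t)=g_t^{n}(\gamma_n(t))$ is a boundary value at the moving tip, and interior convergence of conformal maps gives no control there. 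One genuinely needs convergence of the curves as \emph{parametrized} curves in capacity time, not merely \Cara convergence of the hulls.

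The paper closes this gap in two steps that your sketch omits. First, it handles the varying terminal times by appending to each competitor an appropriate-length segment of the hyperbolic geodesic from $z$ to $\infty$ in its complement (which adds no energy and keeps the curves $K$-quasiarcs), so that all competitors live on a common capacity interval $[0,T]$. Second, it invokes the fact from the proof of \cite[Thm. 4.1]{LMR} that the modulus of continuity of a $K$-quasiarc in its capacity parametrization depends only on $K$; this makes the family $\{\gamma_n\}$ bounded and equicontinuous, so Arzela--Ascoli gives uniform convergence of the capacity-parametrized curves, and the limit is identified with $q([0,i])$ via the hcap convergence you noted. Only then does one apply \cite[Thm. 1.8]{YuanTop}, which says that uniform convergence of \emph{simple} curves implies uniform convergence of their drivers, after which lower semi-continuity finishes the proof exactly as you wrote. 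So your diagnosis of where the difficulty lies is accurate, but the bridge requires the quasiarc equicontinuity input and the curve-to-driver continuity theorem, not a generic continuity of the Loewner correspondence.
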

The argument is a standard application of the compactness of quasiconformal mappings and the lower semi-continuity of energy \eqref{Ineq:LoewnerEnergyLSC}. We will show below in Theorem \ref{Thm:Wang}(i) that the minimizer is unique.
\begin{proof}
    By scale invariance of energy we may assume $z = e^{i\theta}$.  In Lemma \ref{Lemma:CircularArc} below we will see that the orthogonal circular arc segment from 0 to $e^{i\theta}$ has energy \begin{align}\label{Eq:CAEnergy}
        -9\log(\sin(\theta)),
    \end{align}
    and so the infimum in \eqref{Eq:AngleEnergyInf} is not $+\infty$ and it suffices to consider those Jordan arcs in $\Gamma_z$ with energy bounded by \eqref{Eq:CAEnergy}.  By \cite[Prop. 2.1]{WangReverse}, each such $\gamma$ is a $K$-quasislit half-plane for some fixed $K = K(\theta)$, and so there exists a $K$-quasiconformal self-map $q$ of $\mathbb{H}$ fixing $0$ and $\infty$ with $q([0,i]) = \gamma$.  
    
    Let $\{\gamma_n\}$ be a sequence such that $I(\gamma_n)$ tends to the infimum in \eqref{Eq:AngleEnergyInf}, and let $\{q_n\}$ be corresponding $K$-quasiconformal maps.  The family $\{q_n\}$ is normal in the spherical metric, and any limiting function is either a constant in $\partial_{\chat}\nH$ or a $K$-quasiconformal self-map of $\mathbb{H}$ \cite[\S2.2f]{LehtoUFT}.  The former cannot happen because all $q_n$ map $i$ to $e^{i\theta}$, and hence, by moving to a subsequence, which we relabel as $q_n$ again, we have a locally uniform limit $q_n \rightarrow q$ in $\mathbb{H}$ and a limiting curve $\gamma:=q([0,i])$.  In fact, the convergence is uniform on $[0,i]$ by Schwarz reflection of the $q_n$ and $q$.  We argue that the convergence is also uniform in the capacity parametrizations of $\gamma_n$ and $\gamma$.
    
    All the curves $\gamma, \gamma_n$ are uniformly bounded in $\mathbb{H}$, and so they are also bounded in half-plane capacity.  By extending each $\tilde{\gamma} \in \{\gamma, \gamma_n\}$ by an appropriate-length segment of the hyperbolic geodesic from $e^{i \theta}$ to $\infty$ in $\nH \backslash \tilde{\gamma}$, we can consider all the $\tilde{\gamma}$ to be defined on the same interval $[0,T]$ of capacity time. By the proof of Theorem 4.1 in \cite{LMR}, the modulus of continuity of a $K$-quasiarc in its capacity parametrization depends only upon $K$, and hence $\{\gamma, \gamma_n\}$ is a bounded and equicontinuous family.\footnote{We have changed the curves with the hyperbolic geodesic segments, but this does not change their Loewner energy, and so the augmented curves are still all $K$-quasiarcs.}  Hence by Arzela-Ascoli we move to a further subsequence, if necessary, and obtain a uniform capacity-parametrization limit $\gamma_n \rightarrow \gamma'$ on $[0,T]$.  But clearly $\gamma'$ must be $\gamma$: since $\mathbb{H}\bs q_n([0,si]) \rightarrow \mathbb{H} \bs q([0,si])$ in the \Cara sense for any $0 \leq s \leq 1$,  $\hcap(q_n([0,si])) \rightarrow \hcap(q([0,si]))$, and it readily follows from the uniform convergence $q_n \rightarrow q$ that the two limits are identical.  
    
    Since $\gamma_n \rightarrow \gamma$ uniformly and all the curves are simple, we also have the uniform convergence $\lambda_n \rightarrow \lambda$ of their associated drivers on $[0,T]$ by \cite[Thm. 1.8]{YuanTop}. The lower semicontinuity of energy then yields 
    \begin{align*}
        I(\gamma) \leq \liminf_{n \rightarrow \infty}I(\gamma_n) = \inf_{\gamma \in \Gamma_z} I(\gamma),
    \end{align*}
    and as $e^{i\theta} \in \gamma$, we have that $\gamma$ is a minimizer.
\end{proof}

\begin{lemma}[``Orthogonal approach'']\label{Lemma:WangEvenApproach}
    Suppose $\lambda$ is the driver for the simple curve $\gamma:[0,T] \rightarrow \nH \cup \{0\}$ with  $I(\gamma) < \infty$.  Then under the centered downward Loewner flow generated by $\lambda$, the image $z_t:=G_t(\gamma(T))$ of the tip of the curve satisfies
    \begin{align}\label{Lim:FiniteEnergyAngle}
        \lim_{t \rightarrow T^-} \arg(z_t) = \frac{\pi}{2}.
    \end{align}
\end{lemma}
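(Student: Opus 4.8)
The plan is to reduce the statement to a rescaling limit in which the residual curve has \emph{small energy} rather than merely short capacity time, and then to invoke the continuous dependence of the curve on its driver at the zero driver. Write $\epsilon := T-t$ and let $G_t:\mathbb{H}\setminus\gamma(0,t]\rightarrow\mathbb{H}$ be the centered downward map, so $G_t(\gamma(t))=0$. The image $\eta_t := G_t(\gamma[t,T])$ of the residual arc is a simple curve issuing from $0$ with tip $z_t$. By additivity of half-plane capacity under composition, $\hcap(g_t(\gamma[t,t+u]))=2u$, so $u\mapsto G_t(\gamma(t+u))$ is the capacity parametrization of $\eta_t$ on $[0,\epsilon]$, and by the concatenation property of Loewner chains its driving function is $u\mapsto \lambda(t+u)-\lambda(t)$. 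In particular the energy of the residual curve is $I(\eta_t)=\tfrac12\int_t^T\dot\lambda(v)^2\,dv$, which tends to $0$ as $t\rightarrow T^-$ since $\dot\lambda\in L^2([0,T])$ and this is the tail of a convergent integral.

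Next I would zoom in. Rescaling by $\epsilon^{-1/2}$ sends capacity time $\epsilon$ to $1$; by \eqref{Eq:ScaledDriver} the rescaled curve $\hat\eta_t:=\epsilon^{-1/2}\eta_t$ has driver $\hat\lambda_t(u)=\epsilon^{-1/2}(\lambda(t+\epsilon u)-\lambda(t))$ on $[0,1]$. Since energy is scale invariant, $I(\hat\eta_t)=I(\eta_t)\rightarrow 0$, while the tip satisfies $\hat\eta_t(1)=\epsilon^{-1/2}z_t$, so $\arg z_t=\arg\hat\eta_t(1)$ and it suffices to show $\hat\eta_t(1)\rightarrow 2i$. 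The Cauchy--Schwarz bound \eqref{Eq:LoewnerHolder} then gives $|\hat\lambda_t(u)|\le\sqrt{2I(\hat\eta_t)}\,\sqrt{u}\le\sqrt{2I(\hat\eta_t)}$, so $\hat\lambda_t\rightarrow 0$ uniformly on $[0,1]$.

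It remains to show that drivers tending uniformly to $0$ (within a family of uniformly bounded energy) generate curves tending to the vertical segment $[0,2i]$, the curve of the zero driver. Here I would use compactness: the $\hat\eta_t$ have energy bounded uniformly for $t$ near $T$, so by the curve-compactness property \ref{Lemma:CurvesCompact} they form a precompact family in $C([0,1])$. Given any sequence $t_n\rightarrow T^-$, pass to a uniformly convergent subsequence $\hat\eta_{t_n}\rightarrow\gamma_*$; by \cite[Thm. 1.8]{YuanTop} the associated drivers converge to the driver of $\gamma_*$, which must therefore equal $\lim\hat\lambda_{t_n}=0$, forcing $\gamma_*=[0,2i]$ and $\hat\eta_{t_n}(1)\rightarrow 2i$. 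Since every subsequence admits a further subsequence with this same limit, $\hat\eta_t(1)\rightarrow 2i$, whence $\arg z_t\rightarrow\pi/2$. Alternatively, one can bypass compactness and directly invoke the \Hol-$1/2$ estimate of \cite{LMR}, that a driver of small \Hol-$1/2$ norm generates a quasiarc uniformly close to the vertical segment.

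The main obstacle is exactly this last implication: convergence of drivers does not imply convergence of curves for arbitrary driving functions, so the argument genuinely relies on the quasiarc structure of finite-energy curves, packaged either as the curve-compactness of property \ref{Lemma:CurvesCompact} together with the curve-to-driver continuity of \cite{YuanTop}, or as the small-norm estimate of \cite{LMR}. The rescaling is what makes this usable: it converts the harmless smallness of the residual capacity time (which by itself says nothing about the argument of $z_t$) into genuine smallness of the energy, so that the residual curve is compared against a fixed, order-one vertical segment.
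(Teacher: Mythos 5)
Your proof is correct, but it takes a genuinely different route from the paper's. The paper deduces the lemma as a soft consequence of Lemma \ref{Lemma:AngleEnergyMinimizerExists}: it defines $m(\theta)$ as the minimal energy among curves from $0$ to $e^{i\theta}$, shows $m$ is monotone in $|\theta-\pi/2|$ (by flowing \emph{up} with the zero driver, which pushes the image of the tip toward the imaginary axis at no energy cost) and strictly positive for $\theta\neq\pi/2$, and then observes that if $\arg(z_{t_n})$ stayed $\epsilon$-far from $\pi/2$ along some $t_n\to T^-$, the residual curve $G_{t_n}(\gamma[t_n,T])$ would have to expend at least $m(\pi/2-\epsilon)>0$ energy, contradicting the vanishing of the tail $\int_{t_n}^T\dot\lambda^2$. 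You replace the minimal-energy function by a rescaling-plus-compactness step: blow the residual curve up to unit capacity time, note via \eqref{Eq:LoewnerHolder} that its driver is uniformly small, and identify the unique subsequential limit as $[0,2i]$ using the curve compactness of property ($\ref{Lemma:CurvesCompact}$) together with the curve-to-driver continuity of \cite{YuanTop}. Both arguments rest on the same two pillars --- vanishing tail energy and the quasiconformal compactness machinery --- but yours does not require the existence of minimizers, and it yields the strictly stronger conclusion $(T-t)^{-1/2}z_t\to 2i$, i.e.\ $|z_t|/(2\sqrt{T-t})\to 1$ in addition to \eqref{Lim:FiniteEnergyAngle}; the paper's route is shorter given that Lemma \ref{Lemma:AngleEnergyMinimizerExists} and the monotonicity of $m$ are needed later anyway. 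One point to make explicit when you apply \cite[Thm.~1.8]{YuanTop}: that result is for simple curves, so you should note that the subsequential limit $\gamma_*$ lies in the compact family of property ($\ref{Lemma:CurvesCompact}$), hence has finite energy and is therefore a quasi-arc, in particular simple.
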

For example, note the increase of the argument of $g_t(\gamma(T)) - \lambda_t$ towards $\pi/2$ from the left to the middle image in Figure \ref{Fig:LoewnerFlow}.  Of course, if $\gamma$ in its entirety has finite energy, then so does $\gamma([0,\tau])$ for any $\tau <T$, and so \eqref{Lim:FiniteEnergyAngle} also holds as $t \rightarrow \tau^-$ for any point $\gamma(\tau)$ on $\gamma$. 

This lemma is very similar to \cite[Lemma B]{PeltWang}, but the difference here is that we do not assume the minimal energy formula for curves through a point $e^{i\theta} \in \mathbb{H}$.  Indeed, we will use Lemma \ref{Lemma:WangEvenApproach} in our deterministic proof of this formula.  While the lemma could be proven through direct analysis of the Loewner equation, it is also a simple consequence of Lemma \ref{Lemma:AngleEnergyMinimizerExists}.\footnote{The author is grateful to Don Marshall for suggesting the following elementary proof, which replaced an earlier more complicated argument.}
\begin{proof}
  The function
  \begin{align}\label{Def:MinimalEnergy}
      m(\theta) := \min_{\gamma \in \Gamma_\theta} I(\gamma),
  \end{align}
  where $\Gamma_\theta$ is the collection of all curves from 0 to $e^{i \theta}$ in $\mathbb{H}$, is well defined by the previous lemma.  We first show that $m$ is non-increasing on $(0,\pi/2]$  and non-decreasing on $[\pi/2, \pi)$; by symmetry it suffices to consider the former case.  Indeed, fix $0 < \theta < \pi/2$ and a curve $\gamma_\theta \in \Gamma_\theta$ such that $I(\gamma_\theta) = m(\theta)$.  Now, beginning with $\gamma_\theta$ in $\mathbb{H}$, apply the \emph{upwards} Loewner flow described by \eqref{Eq:LoewnerEqUp} using the zero driver $\xi_t \equiv 0$.  Under this flow, the argument of the image of the tip of $h_t(\gamma_\theta)$ increases to $\frac{\pi}{2}^-$ as $t \rightarrow \infty$, as one can see from the explicit conformal map $h_t(z) = \sqrt{z^2-4t}$.  Since the zero driver does not add energy, the constructed curve $h_t(\gamma_\theta) \cup L_t$, where $L_t$ is the line segment from $0$ to $h_t(0) = 2i\sqrt{t}$, has identical energy to $\gamma_\theta$, thus showing $m(\theta') \leq m(\theta)$ for $\theta < \theta' < \pi/2$ as claimed.
  
  We also see that $m(\theta) > 0$ if $\theta \neq \pi/2$, for if $m(\theta)$ vanished, then any minimizer $\gamma_{\theta}$ would be driven by the zero driver, which corresponds to the imaginary axis, not a curve through $e^{i \theta}$.   Thus $m$ is non-increasing on $(0,\pi/2]$.
  
  Now, if there are times $t_n \rightarrow T^-$ such that $|\arg(z_{t_n}) - \pi/2| > \epsilon$, then $\lambda$ must expel at least some $m(\pi/2-\epsilon) = m(\pi/2+\epsilon)>0$ amount of energy on each $[t_n,T]$ by the above, contradicting $\lim_{t \rightarrow T^-} \int_t^T \dot{\lambda}^2(s)ds =0$. 
\end{proof}

Our main results about the EMP curves are in the following theorem.  We recall that the statement of $(i)$ is from Wang and is included because we provide an alternative, deterministic proof.

\begin{theorem}\label{Thm:Wang}
    Let $\theta \in (0,\pi)$.
    \begin{enumerate}[$(i)$]
        \item \label{Thm:WangEnergy}\cite[Proposition 3.1]{WangReverse} There exists a unique $\gamma_\theta$ from 0 to $e^{i\theta}$ in $\mathbb{H}$ which minimizes the Loewner energy among all such curves.  Furthermore, $\gamma_\theta$ is given by downwards SLE$_0(-8)$ starting from $(\lambda(0), U(0)) = (0,e^{i\theta})$, and satisfies
        \begin{align}\label{Eq:WangEnergy}
            I(\gamma_\theta) = -8\log(\sin(\theta)).
        \end{align}
        The driving function $\lambda_\theta$ is $C^\infty([0,\tau_\theta))$ and monotonic.
        \item\label{Thm:WangDriver} (Driver and welding) For $0<\theta <\pi/2$, the upwards-flow driver $\xi_\theta(t) = \lambda_\theta(\tau-t) - \lambda_\theta(\tau)$ is explicitly
        \begin{align}\label{Eq:WangDriver}
        \xi_\theta(t) &= - \frac{4\sqrt{2}}{\sqrt{3}}\left( \sqrt[3]{\sqrt{\frac{\sin^6(\theta)}{36\cos^2(\theta)}+t^2}+t} -\sqrt[3]{\sqrt{\frac{\sin^6(\theta)}{36\cos^2(\theta)}+t^2}-t} \right)^{3/2}
    \end{align}
    for $0 \leq t \leq \frac{1}{6}(1 - \frac{1}{2}\cos(2\theta)) =:\tau = \tau_\theta$.  In particular, $\xi_\theta(0)=0$ and $\xi_\theta(\tau) = -\frac{4}{3}\cos(\theta)$.  For $\pi/2 < \theta < \pi$, $\xi_\theta = -\xi_{\pi-\theta}$.  
    
    For any $0 < \theta < \pi$, the conformal welding $\varphi_\theta$ corresponding to the Loewner-flow normalization is
    \begin{align}\label{Eq:WangWeld}
        \varphi_\theta(x) = \frac{-x}{\sqrt{1 + \pi \frac{\cos(\theta)}{\sin^3(\theta)}x^2}},
    \end{align}
    where $\varphi_\theta: [x_\theta,0] \rightarrow [0,y_\theta]$, with \begin{align}\label{Eq:WangWeldInterval}
        x_\theta = \frac{-\sqrt{\sin^3(\theta)}}{\sqrt{\sin(\theta) - \theta \cos(\theta)}} \qquad \text{ and } \qquad y_\theta = \frac{\sqrt{\sin^3(\theta)}}{\sqrt{\sin(\theta) + (\pi-\theta)\cos(\theta)}}.
    \end{align}
        \item \label{Thm:WangUniversal} (Universality of driver and welding) For each fixed $\theta \neq \pi/2$, the driver \eqref{Eq:WangDriver} and welding \eqref{Eq:WangWeld} are universal, in the sense that they extend (using the identical formulas) to all $t \geq 0$ and $x \leq 0$, respectively, and these extensions generate every EMP curve to $e^{i\alpha}$, $\alpha \neq \pi/2$, up to scaling, translation and reflection in the imaginary axis.  
        
        More precisely, for every $t >0$, $\xi_\theta |_{[0,t]}$ generates the scaled and translated EMP curve $r_t\gamma_{\alpha(t)} + \xi_\theta(t)$, where the range of $t \mapsto \alpha(t)$ is the connected subinterval of $(0,\pi)\bs \{\pi/2\}$ containing $\theta$.  Explicitly, the curve generated by $\xi_\theta$ on the interval
        \begin{align*}
            [0,t_\alpha] := \Big[0, \frac{\sin^3(\theta)\cos(\alpha)}{6\cos(\theta)\sin^3(\alpha)}\Big( 1 - \frac{1}{2}\cos(2\alpha)\Big) \Big]
        \end{align*}
        is (a translation of) the scaled EMP curve $r \gamma_\alpha$, where 
        \begin{align}\label{Eq:WangUniversalr}
            r = \sqrt{\frac{\sin^3(\theta)}{\cos(\theta)} \cdot \frac{\cos(\alpha)}{\sin^3(\alpha)}}.
        \end{align}
        
        Similarly, for every $u<0$, $\varphi_\theta|_{[u,0]}$ is the welding function of the scaled EMP curve $r_u \gamma_{\alpha(u)}$, where the range of $u \mapsto \alpha(u)$ is the connected subinterval of $(0,\pi)\bs \{\pi/2\}$ containing $\theta$.  Explicitly, the curve welded by $\varphi_\theta$ on the interval 
        \begin{align*}
            [u_\alpha,0] := \Big[ -\sqrt{\frac{\sin^3(\theta)}{\cos(\theta)} \cdot \frac{\cos(\alpha)}{\sin(\alpha)-\alpha \cos(\alpha)}},0 \Big]
        \end{align*}
        by the upwards centered Loewner map is the scaled EMP curve $r \gamma_\alpha$, with $r$ again given by \eqref{Eq:WangUniversalr}.

        
        \item\label{Thm:Wang0+} (Limiting curve) As $\theta \rightarrow 0^+$, $\gamma_\theta$ converges pointwise to $\gamma_0$ for $0 \leq t \leq \tau_0 = 1/12$, where $\gamma_0$ is the curve with backwards driver
        \begin{align*}
            \xi_0(t) = -\frac{8}{\sqrt{3}}\sqrt{t}
        \end{align*}
        on $[0,\tau_0]$. Furthermore, the Loewner mapping-down functions $g_\theta:\mathbb{H}\bs \gamma_\theta \rightarrow \mathbb{H}$ converge locally uniformly to $g_0: \mathbb{H} \bs \Fill(\gamma_0) \rightarrow \mathbb{H}$ on $\mathbb{H} \bs \Fill(\gamma_0)$.  Also, $\gamma_0$ and its reflection $1/\overline{\gamma}_0 =:\gamma_0^*$ over $\partial \mathbb{D}$ form a boundary geodesic pair in $(\mathbb{H}; 0, \infty, 1)$ and are a subset of the algebraic variety
        \begin{align}\label{Eq:Variety}
            (4-3x)y^2 = 3x(x-1)^2.
        \end{align}
        In particular, $\gamma_0(t)$ meets $\mathbb{R}$ at $x=1$ when $t=1/12$, and the angle between $\mathbb{R}$ and $\gamma$ with respect to the bounded component of $\mathbb{H}\bs \gamma_0$ is $\pi/3$.
    \end{enumerate}
\end{theorem}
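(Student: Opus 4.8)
I will prove part $(\ref{Thm:Wang0+})$, taking parts $(\ref{Thm:WangEnergy})$--$(\ref{Thm:WangUniversal})$—in particular the explicit driver \eqref{Eq:WangDriver}—as given. The plan is to begin with the na\"{i}ve limit of the driver. Writing $A_\theta := \sin^6(\theta)/(36\cos^2(\theta))$, we have $A_\theta \to 0$ as $\theta \to 0^+$, so for each fixed $t>0$ the inner difference of cube roots in \eqref{Eq:WangDriver} tends to $\sqrt[3]{2t}$ and hence the whole $3/2$-power bracket to $(2t)^{1/2}$, giving $\xi_\theta(t) \to -\tfrac{4\sqrt2}{\sqrt3}(2t)^{1/2} = -\tfrac{8}{\sqrt3}\sqrt{t} = \xi_0(t)$, uniformly on $[0,\tau_0]$, while $\tau_\theta = \tfrac16(1-\tfrac12\cos 2\theta) \to \tfrac1{12} = \tau_0$. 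Reversing, the downward driver converges to $\lambda_0(t) = \mathrm{const} - \tfrac{8}{\sqrt3}\sqrt{\tau_0 - t}$, whose derivative blows up at $t=\tau_0$—exactly when $\gamma_0$ reaches $\nR$—so $I(\gamma_0)=\infty$ and the quasiconformal compactness behind Lemma \ref{Lemma:AngleEnergyMinimizerExists} is \emph{unavailable}. This is the main obstacle, and I would get around it by decoupling the two convergence claims. For the maps, any $z$ in the unbounded component of $\nH \bs \Fill(\gamma_0)$ is never swallowed, so $g^\theta_t(z)-\lambda_\theta(t)$ stays bounded away from $0$; the Loewner field \eqref{Eq:BackgroundLoewnerEq} is then Lipschitz there uniformly in $\theta$, and a Gr\"{o}nwall estimate depending only on $\|\lambda_\theta - \lambda_0\|_\infty \to 0$ yields $g_\theta \to g_0$ locally uniformly—crucially insensitive to the blow-up of $\dot\lambda_0$. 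For the trace, I would fix $\delta>0$ and note that the portion of $\gamma_\theta$ away from the contact point carries energy $\tfrac12\int_\delta^{\tau_0}\dot\xi_\theta^2 \to \tfrac12\int_\delta^{\tau_0}\dot\xi_0^2 < \infty$, so on that portion the curve-compactness argument of Lemma \ref{Lemma:AngleEnergyMinimizerExists} applies; the tip converges separately since $\gamma_\theta(\tau_\theta) = e^{i\theta} \to 1$. Letting $\delta \to 0$ gives pointwise convergence on $[0,\tau_0]$.

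Next I would compute $\gamma_0$ explicitly. Because the upward driver $\xi_0 = c\sqrt{t}$, $c = -8/\sqrt3$, is scale-invariant by \eqref{Eq:ScaledDriver}, I would look for the self-similar solution $h_t(z) = \sqrt{t}\,\eta(z/\sqrt t)$ of \eqref{Eq:LoewnerEqIntroUp} normalized by $\eta(\zeta)\sim\zeta$ at $\infty$; substitution collapses the flow to the separable ODE $\tfrac{d\zeta}{\zeta} = \tfrac{(\eta-c)\,d\eta}{\eta^2 - c\eta + 4}$. The decisive point is that $c^2 = 64/3 > 16$ makes the roots $2/\sqrt3$ and $2\sqrt3$ real and, moreover, makes the coefficient $-\tfrac{c}{2}/(\eta_+-\eta_-) = \mp 1$ an \emph{integer}, so the integral is elementary and the solution \emph{algebraic}: $\zeta = (\eta - 2/\sqrt3)^{3/2}(\eta - 2\sqrt3)^{-1/2}$ (taking the branch landing in the right half-plane). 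The trace is the locus where $\zeta \in \nR$, i.e. $\Imag\big[(\eta - 2/\sqrt3)^3(\Conj{\eta} - 2\sqrt3)\big]=0$; expanding and clearing the denominator produces a cubic in $\Real(\eta), \Imag(\eta)$. From it I would read off that the trace meets $\nR$ at the numerator root (a $3/2$-branch point, where $\eta - 2/\sqrt3 \sim \zeta^{2/3}$ forces the real-$\zeta$ trace to leave along $e^{2\pi i/3}$, hence at angle $\pi/3$) and at a simple zero of $\Imag(\eta)$ (perpendicular approach—the base). The affine map sending base $\mapsto 0$ and tip $\mapsto 1$ (a reflection and a translation) then carries the cubic onto exactly \eqref{Eq:Variety}, and simultaneously records $\gamma_0(1/12)=1$ together with the tip angle $\pi/3$.

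Finally, for the geodesic-pair claim I would work directly from the explicit conformal map $g_0$ obtained by inverting the self-similar solution above. With $\gamma_0^* := 1/\Conj{\gamma_0}$, and the observation that $\gamma_0$ joins $0$ to $1 \in \partial\nD$ (so that $\gamma_0^*$ joins $1$ to $\infty$), it suffices to check that $g_0$ carries $\gamma_0^*$ to a vertical half-line, which—since $g_0$ fixes $\infty$—is equivalent to $\gamma_0^*$ being the hyperbolic geodesic from $1$ to $\infty$ in its component of $\nH \bs \gamma_0$. The symmetric statement, that $\gamma_0$ is the geodesic from $0$ to $1$ in its component of $\nH\bs\gamma_0^*$, then follows by conjugating with the anticonformal involution $R(z)=1/\Conj{z}$, which interchanges the two domains and swaps $\gamma_0 \leftrightarrow \gamma_0^*$. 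The one genuinely new feature over the \cite{MRW} setting is that the intersection $\gamma_0 \cap \gamma_0^* = \{1\}$ sits on $\partial\nH$; this boundary case is precisely what Corollary \ref{Cor:BoundaryGeodesic} is set up to record.
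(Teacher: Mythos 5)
Your proposal addresses only part~$(\ref{Thm:Wang0+})$, and for the convergence statements it runs essentially parallel to the paper: you correctly identify the central obstacle ($I(\gamma_0)=\infty$ because $\dot\lambda_0$ blows up at $\tau_0$), and your workaround --- a Gr\"{o}nwall/Lipschitz estimate for the maps on the unswallowed region (this is in substance the proof of \cite[Prop.~4.7]{Lawler}, which the paper invokes) plus truncated-energy compactness for the trace on $[0,\tau_0-\delta]$ --- matches the paper's use of the chopped energy $I_\epsilon(\theta)$. Where you genuinely diverge is in identifying $\gamma_0$ and proving the geodesic-pair property. The paper first establishes that $\gamma_0^*$ is a hyperbolic geodesic by passing the already-known geodesic pairs $\gamma_\theta\cup\gamma_\theta^*$ to the limit (via Hausdorff convergence of $F_\theta([iy_1,iy_2])$), and only then extracts the variety \eqref{Eq:Variety} from a degree-three branched cover $R(z)=z^2(z-3)/(1-3z)$ built by Schwarz reflection. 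You instead solve the Loewner ODE directly via the self-similar ansatz $h_t(z)=\sqrt{t}\,\eta(z/\sqrt t)$ for the driver $c\sqrt t$ (the classical Kager--Nienhuis--Kadanoff route), obtaining $\zeta=(\eta-\eta_+)^{3/2}(\eta-\eta_-)^{-1/2}$ and reading the variety off the real locus; the geodesic property is then a separate explicit verification. This is a legitimate and logically independent route --- it buys an explicit formula for $g_0$ and makes the algebraicity transparent from the integer partial-fraction coefficient --- at the cost of replacing the paper's soft limiting argument with two nontrivial computations.

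Two concrete issues. First, a sign slip: with $c=-8/\sqrt3$ the roots of $\eta^2-c\eta+4$ are $\eta_\pm=-2/\sqrt3$ and $-2\sqrt3$, not $+2/\sqrt3$ and $+2\sqrt3$; your formula as written corresponds to the reflected driver $+\tfrac{8}{\sqrt3}\sqrt t$. This is absorbed by your later ``reflection and translation,'' but as stated the branch bookkeeping (and hence the claim that the real-$\zeta$ trace leaves the branch point along $e^{2\pi i/3}$) is not consistent with the chosen sign of $c$. Second, the two steps that actually carry the content of the algebraic and geodesic claims --- that the affine renormalization of your cubic is \emph{exactly} $(4-3x)y^2=3x(x-1)^2$, and that $g_0$ maps $\gamma_0^*=1/\Conj{\gamma_0}$ onto a vertical half-line --- are asserted as computations to be done rather than carried out. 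Both are finite verifications and I expect them to succeed, but until they are executed the proof of the variety and of the boundary-geodesic-pair property is incomplete; note that the paper's limiting argument obtains the geodesic property with no computation at all, which is one reason it is structured the other way around.
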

\noindent See Figure \ref{Fig:Gamma0} for an illustration of $\gamma_0$ and the geodesic pairing property of part $(\ref{Thm:Wang0+})$ of the theorem.
\begin{remark}\label{Remark:RemainingWang} We preface the proof with several comments.
\begin{figure}
    \centering
    \includegraphics[scale=0.1]{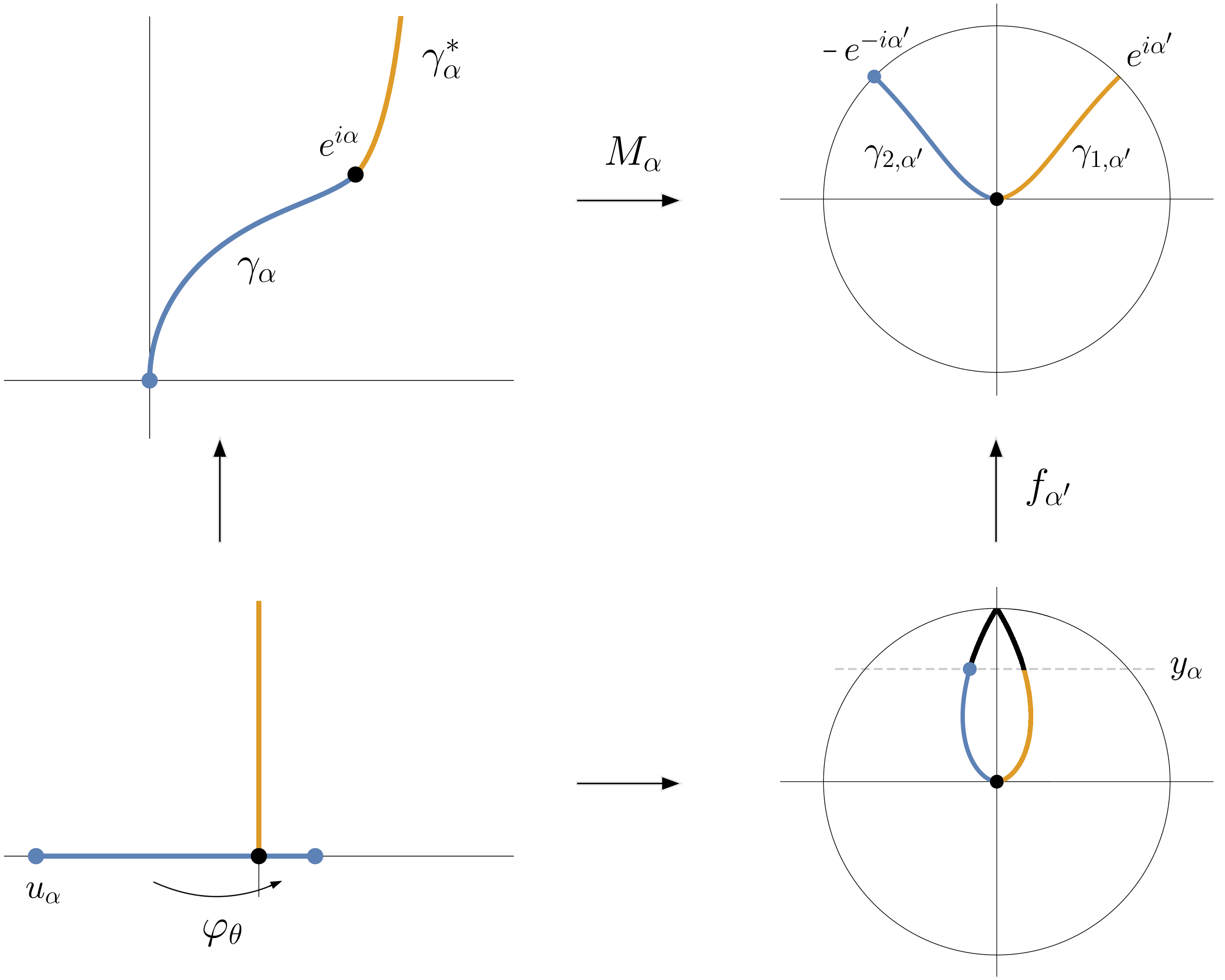}
    \caption{\small The function $\varphi_\theta$ in \eqref{Eq:WangWeld} is the welding for the blue half of the universal curve $\gamma$ of \cite[Remark 2.4]{MRW} in the sense of providing the welding for the bottom arrow of this diagram, for any $\alpha \in (0,\pi/2)$.  Here $\alpha' = \frac{\pi}{2} - \alpha$.}
    \label{Fig:UniversalCurveWang}
\end{figure}
\begin{enumerate}[$(a)$]
    \item\label{Remark:RestOfWang} When we map down an initial portion $\gamma_\theta([0,t])$ of a EMP curve, the remainder $G_t(\gamma_\theta([t,\tau]))$ is the EMP curve for the angle of $G_t(e^{i\theta})$, since if not, we could replace $G_t(\gamma_\theta([t,\tau]))$ with the minimizer and lower the energy.  
    
    In contrast, the initial portion $\gamma_\theta([0,t])$ mapped down is \emph{not} also a EMP curve: we see below in \eqref{Eq:WangSystem} that if $\theta \neq 0$, $\dot{\lambda}_\theta \neq 0$, and so Theorem \ref{Lemma:Inf98}$(\ref{Thm:LocalCompareWang})$ below applies to the initial segments of the EMP curves themselves.  Hence the symmetry of the Wang family is with respect to the ``top,'' or the portion remaining after mapping down, not with respect to the ``base'' or the portion mapped down, and it is therefore natural to express the driving function \eqref{Eq:WangDriver} in terms of the upwards flow $\xi$ rather than the downwards flow $\lambda$.  That is, the curve $\eta[0,t]$ generated by the upwards flow under $\xi$ on $[0,t]$, $t < \tau$, is always a minimizer.
    
    This is the opposite of what we will see for the EMW family, where what we map down is an EMW curve, but what remains is not.  See Remark \ref{Remark:DualFam}.
    
    \item Note that all quantities inside roots in \eqref{Eq:WangDriver} are non-negative, and so there is no ambiguity about branch cuts.
    
    \item We see in part $(\ref{Thm:Wang0+})$ that the curve $\gamma_0 \cup \gamma_0^*$ obtained in the $\theta \rightarrow 0^+$ limit is algebraic.  It is natural to ask if $\Gamma_\theta := \gamma_\theta \cup \gamma_\theta^*$ is also algebraic for other values of $\theta$.  The conjectural answer is that $\Gamma_\theta$ is algebraic if and only if $\theta \in \{0, \pi/2, \pi\}$.  Indeed, the downwards driver for $\Gamma_\theta$ is $\lambda_\theta = \xi_\theta(\tau_\theta-t) - \xi_\theta(\tau_\theta)$ on $[0,\tau_\theta]$, followed by the constant driver $\lambda_\theta(\tau_\theta)$ thereafter.  The expansion of $\xi_\theta$ at $t=0^+$, corresponding to the intersection point between $\gamma_\theta$ and $\gamma_\theta^*$, is
    \begin{align}\label{Eq:WangDriverExpansion}
        \xi(t) = -\frac{32}{3} \frac{\cos(\theta)}{\sin^3(\theta)} t^{3/2} + O(t^{7/2}),
    \end{align}
    and so the driver has global $C^{3/2}$-regularity.  If $\Gamma_\theta$ were algebraic (and non-singular at $e^{i\theta}$, which is natural), then it would be smooth at $e^{i\theta}$, say at least $C^{2+\epsilon}$.  By the correspondence between curve regularity and driver regularity \cite{RohdeWang}, this would mean the driver would be globally $C^{3/2+\epsilon}$, a contradiction.  
    
    The reason this is not a proof is that the curve-driver regularity result of \cite{RohdeWang} is only proven for curves up to $C^2$.  One expects it to hold for higher regularity as well (see \cite[Comment 4.1]{RohdeWang}), in which case the above argument becomes a proof.  Note that the ``if'' direction of the conjecture is proven by $(\ref{Thm:Wang0+})$ and the fact that $\gamma_{\pi/2}$ is a line segment orthogonal to $\mathbb{R}$. 

    \item\label{Remark:WangUniversalCurve} In \cite[Remark 2.4]{MRW}, a universal curve $\gamma$ in $\mathbb{D}$ was noted for the EMP family, which is similar in spirit to the universality properties of the welding and driver in part $(\ref{Thm:WangUniversal})$ of the theorem.  The connection is that our $\varphi_\theta$ in \eqref{Eq:WangWeld} welds half of \cite{MRW}'s universal curve $\gamma$ in the sense pictured in Figure \ref{Fig:UniversalCurveWang}. Readers interested in further details may consider the following more thorough description.
    
    Let $\gamma_\alpha^*$ be the hyperbolic geodesic in $\mathbb{H} \bs \gamma_\alpha$ from $e^{i\alpha}$ to $\infty$ and first note that $\gamma_\alpha \cup \gamma_\alpha^*$ is a geodesic pair in $(\mathbb{H}; 0, \infty, e^{i\alpha})$.  Indeed, applying $z \mapsto -1/\bar{z}$ to $\gamma_\alpha \cup \gamma_\alpha^*$ gives a curve from 0 through $e^{i \alpha}$ with the same minimizing energy by invariance of energy under reversal and reflection \cite{WangReverse}.  Hence by uniqueness of minimizers, $-1/\gamma_\alpha^* = \gamma_\alpha$, which shows $\gamma_\alpha$ is the hyperbolic geodesic from 0 to $e^{i \alpha}$ in $\mathbb{H} \bs \gamma_\alpha^*$.
    
    A change of coordinates puts us in the setting of \cite[Figure 1]{MRW}: the M\"{o}bius map $M_\alpha:\mathbb{H} \rightarrow \mathbb{D}$ given by 
    \begin{align}\label{Eq:GeodesicMobius}
        M_\alpha(z) = ie^{-i\alpha}\frac{z-e^{i\alpha}}{z-e^{-i\alpha}}
    \end{align}
    takes $\gamma_\alpha \cup \gamma_\alpha^*$ to a geodesic pair $\gamma_{1,\alpha'} \cup \gamma_{2,\alpha'}$ in $(\mathbb{D}; e^{i\alpha'}, -e^{-i \alpha'}, 0)$, where $\alpha' = \frac{\pi}{2}-\alpha$.  The universal curve $\gamma$ is a Jordan curve $\gamma$ in $\mathbb{D} \cup \{i\}$ that has the property that for each $\alpha'$, there exists $y_{\alpha'}$ such that the conformal map $f_{\alpha'} : \mathbb{D} \bs (\gamma \cap \{\Imag(z)\geq y_{\alpha'}\}) \rightarrow \mathbb{D}$ with $f_{\alpha'}(0) = 0$, $f_{\alpha'}'(0)>0$ satisfies $f_{\alpha'}(\gamma \cap \{ \Imag(z) < y_{\alpha'} \}) = \gamma_{1,\alpha'} \cup \gamma_{2,\alpha'}$ (in the notation of \cite{MRW}, $\gamma = G_{\pi/2}^{-1}(I)$ for $I = (\pi/2, \infty)$ and $G_\theta$ defined there).
    
    Now, $\varphi_\theta|_{[u_\alpha,0]}$  welds $r_u \gamma_\alpha$, the minimizer $\gamma_\alpha$ scaled by some $r_u >0$, whose image under $f_{\alpha'}^{-1} \circ M_\alpha \circ z/r_u$ is $\gamma \cap \{\Imag(z) < y_{\alpha'}\} \cap \{\Real(z) \leq 0\}$, and in this sense we say $\varphi$ is the welding for $\gamma$. 
\end{enumerate}
\end{remark}
\begin{proof}
    Minimizing drivers exist by Lemma \ref{Lemma:AngleEnergyMinimizerExists}, and we proceed to show that any such $\lambda$ satisfies a certain differential equation at all points of differentiability, which will give uniqueness and the formula \eqref{Eq:WangEnergy} for $m(\theta)$, with $m$ as in \eqref{Def:MinimalEnergy}.  
    
    Indeed, let $\lambda$ be a minimizer, and let $z(t) = x(t) + iy(t) := G_t(e^{i\theta}) = g_t(e^{i\theta}) - \lambda(t)$ be the image of $e^{i\theta}$ under the centered downwards flow generated by $\lambda$.  Note that by the Loewner equation \eqref{Eq:BackgroundLoewnerEq}, $G_t$ satisfies
    \begin{align}\label{Eq:LoewnerEqCentered}
        \dot{G}_t(z) = \frac{2}{G_t(z)} - \dot{\lambda}(t)
    \end{align}
    at points of differentiability of $\lambda$. Since $I(\lambda)<\infty$, $\lambda$ is absolutely continuous and so differentiable for a.e. $t$. We claim that for a.e. $t$, 
    \begin{align}\label{Eq:WangLambdaPrimeFormula}
        \dot{\lambda}(t) = \frac{8x(t)}{|z(t)|^2}.
    \end{align}
    By scale invariance the energy depends only on the angle, and by Lemma \ref{Lemma:WangEvenApproach} the angle $\theta(t):= \Arg(z(t))$ must tend towards $\pi/2$ if $\lambda$ minimizes energy.  And indeed, it must move strictly monotonically: if the angle ever decreases and then returns to the same value, energy is wasted, while if the angle is constant over some interval, then $\lambda$ cannot be constant and we have also wasted energy.  Hence $z(t)$ should traverse through the angles as efficiently as possible; the change in angle to the change in energy, $d\theta/dm$, must be optimal.  That is, $d\theta/dm$ must be maximized when $\theta <\pi/2$ and minimized when $\theta > \pi/2$.
    
    Suppose first that $\lambda$ is right-differentiable at $t=0$ and that $t=0$ is a Lebesgue point for $\dot{\lambda}^2$.  Then the energy expelled on a small interval $[0,\Delta t]$ is $\frac{1}{2}\dot{\lambda}^2(0)\Delta t + o(\Delta t)$.  Furthermore, $\theta$ is right-differentiable at $t=0$, and
    \begin{align*}
        \Delta \theta = \dot{\theta}(0)\Delta t + o(\Delta t) = \Imag\Big(\frac{\dot{z}(0)}{z(0)}\Big)\Delta t + o(\Delta t) = (-4x(0)y(0)+\dot{\lambda}(0)y(0))\Delta t + o(\Delta t)
    \end{align*}
    by \eqref{Eq:LoewnerEqCentered} and the fact that $|z(0)|=1$.  We thus have
    \begin{align*}
        \frac{\Delta \theta}{\Delta I} = \frac{(-4xy+\dot{\lambda}y)\Delta t + o(\Delta t)}{\frac{1}{2}\dot{\lambda}^2\Delta t + o(\Delta t)} \rightarrow \frac{-8xy}{\dot{\lambda}^2} + \frac{2y}{\dot{\lambda}}
    \end{align*}
    as $\Delta t \rightarrow 0$, where $x,y$ and $\dot{\lambda}$ are evaluated at $t=0$, and $\dot{\lambda} =\dot{\lambda}(0)$ is the right derivative of $\lambda$.  This expression is optimized with respect to $\dot{\lambda}$ when $\dot{\lambda}=8x$, which yields a local max when $x>0$ and a local min when $x<0$, as needed.  Thus any minimizer for which $\dot{\lambda}(0)$ exists and where $t=0$ is a Lebesgue point of $\dot{\lambda}^2$ must satisfy  \eqref{Eq:WangLambdaPrimeFormula} at $t=0$ (recall $|z(0)|=1$).
     
     More generally, let $t_0$ be a point of differentiability of $\lambda$ and a Lebesgue point of $\dot{\lambda}^2$.  Note that the remaining curve $\tilde{\gamma}:=G_{t_0}(\gamma([t_0,\tau]))$ must be an energy minimizer through $z(t_0)$, as discussed above in Remark \ref{Remark:RemainingWang}. Thus $\tilde{\gamma}/|z(t_0)|$ is a minimizer as in the previous paragraph, and so its driver $\tilde{\lambda}$ has initial right derivative $8\tilde{x}(0) = 8x(t_0)/|z(t_0)|$.  Recalling the scaling relation \eqref{Eq:ScaledDriver}, we therefore have 
     \begin{align*}
         |z(t_0)|\dot{\lambda}(t_0) = \dot{\tilde{\lambda}}(0) = \frac{8x(t_0)}{|z(t_0)|},
     \end{align*}
     as in \eqref{Eq:WangLambdaPrimeFormula}.  Since $\lambda$ is differentiable at a.e. $t$ and a.e. $t$ is a Lebesgue point of the integrable function $\dot{\lambda}^2$, \eqref{Eq:WangLambdaPrimeFormula} holds as claimed.  
     
    
    By \eqref{Eq:LoewnerEqCentered} we thus obtain the system of differential equations
    \begin{align}\label{Eq:WangSystem}
        \dot{\lambda}(t) = \frac{8x}{x^2 + y^2}, \qquad \dot{x}(t) = \frac{-6x}{x^2+y^2}, \qquad \dot{y}(t) = \frac{-2y}{x^2+y^2}
    \end{align}
    for which the triple $(\lambda,x,y)$ generated by $\lambda$ is an a.e.-$t$ solution, and where each component is absolutely continuous.  Since we can thus recover each of $\lambda, x$ and $y$ through integration and the three right-hand sides in \eqref{Eq:WangSystem} are continuous, we have that \eqref{Eq:WangSystem} actually holds for \emph{all} $t$, and hence each of $\lambda, x$ and $y$ is $C^1$ on $[0,\tau)$.  By bootstrapping in \eqref{Eq:WangSystem}, then, each is $C^2$, and continuing, each is $C^\infty([0,\tau))$.\footnote{We note that Wang \cite[equation (3.2)]{WangReverse} also obtained this ODE for $\dot{\lambda}$, but only by means of using the minimal-energy formula \eqref{Eq:WangEnergy}, whereas we go the opposite direction, using \eqref{Eq:WangSystem} to derive \eqref{Eq:WangEnergy}.}
    
    Classical solutions to \eqref{Eq:WangSystem} are also unique: starting at any point $(x_0,y_0)$, both $x(t)$, $y(t)$ are bounded away from 0 on a small time interval, and so the function $f(t,\lambda,x,y) = \big(8x(x^2+y^2)^{-1}, -6x(x^2+y^2)^{-1}, -2y(x^2+y^2)^{-1}\big)$ is Lipschitz.  Thus we have smoothness, uniqueness, as well as monotonicity of $\lambda$ from \eqref{Eq:WangSystem}.

    We also note that \eqref{Eq:WangSystem} immediately gives that the EMP curve is downwards SLE$_0(-8)$ starting from $(\lambda_0, U_0) = (0, e^{i\theta})$, as
    \begin{align*}
        \Real \frac{8}{U(t)-\lambda(t)} = \Real \frac{8}{g_t(e^{i\theta})-\lambda(t)} = \Real \frac{8\overline{z(t)}}{|z(t)|^2} = \dot{\lambda}(t).
    \end{align*}
    
    For the energy formula \eqref{Eq:WangEnergy} for $m(\theta)$, note that if we flow down starting from a fixed $\theta_0 = \theta(0)$, we know the remaining curve is always the minimizer for the angle 
    \begin{align}\label{Eq:WangMinimizerAngle}
        \theta(t) = \arg(z(t)),
    \end{align}
    the argument of the image of the tip (see the remark before the proof).  Hence through the composition $m(\theta(t))$ we may regard $m$ as a function of $t$, and we have
    \begin{align*}
        \frac{dm}{d\theta} = \frac{\dot{m}}{\dot{\theta}} =  \frac{-\frac{1}{2}\dot{\lambda}^2}{\frac{-4xy}{(x^2+y^2)^2} + \frac{\dot{\lambda}y}{x^2+y^2}} = -8\frac{x}{y} = -8\cot(\theta),
    \end{align*}
    and therefore
    \begin{align*}
        m(\pi/2) - m(\theta_0) = -m(\theta_0) = \int_{\theta_0}^{\pi/2} -8\cot(\theta)d\theta = 8\log(\sin(\theta_0)),
    \end{align*}
    as claimed, completing the proof of $(i)$.
    \bigskip
    
    Our formulas in $(\ref{Thm:WangDriver})$ for the driving function  and the capacity time now are exercises in ODE.  We note from \eqref{Eq:WangSystem} that $x$ is monotonically decreasing (recall we are assuming $0 < \theta < \pi/2$), and so we may reparametrize $\lambda$ as a function of $x$ and note $\frac{d\lambda}{dx} = -\frac{4}{3}$ from \eqref{Eq:WangSystem}, and hence 
    \begin{align}\label{Eq:WangLambdaStep1}
        \lambda(t) = \lambda(x(t)) - \lambda(x(0)) = \frac{4}{3}\cos(\theta) - \frac{4}{3}x(t).
    \end{align}
    In particular,
    \begin{align}\label{Eq:WangTerminalLambda}
        \lambda(\tau) = \frac{4}{3}\cos(\theta).    
    \end{align}
    To determine $x(t)$, we note from \eqref{Eq:WangSystem} that
    \begin{align}\label{Eq:WangLambdaStep2}
        \frac{dx}{dy} = 3 \frac{x}{y}, \qquad \text{implying} \qquad x(t) = \frac{\cos(\theta)}{\sin^3(\theta)}y(t)^3
    \end{align}
    since $(x(0),y(0)) = (\cos(\theta),\sin(\theta))$.  Writing $b = b(\theta):=\frac{\cos(\theta)}{\sin^3(\theta)}$ and substituting back into the equation for $\dot{y}(t)$ yields
    \begin{align*}
        \dot{y}(t) = \frac{-2}{b^2y^5 + y}
    \end{align*}
    which has implicit solution
    \begin{align}\label{Eq:WangMinimizerYtImplicit}
        \frac{b^2}{6}y(t)^6 + \frac{1}{2}y(t)^2 = -2t + \frac{1}{6}\cos^2(\theta) + \frac{1}{2}\sin^2(\theta).
    \end{align}
    We thus see that $u(t) := y(t)^2$ satisfies the cubic
    \begin{align*}
        0 = u^3 + 3\frac{\sin^6(\theta)}{\cos^2(\theta)}u + 6 \frac{\sin^6(\theta)}{\cos^2(\theta)}\big(2t - \frac{1}{6}\cos^2(\theta) -\frac{1}{2}\sin^2(\theta)  \big) =: u^3 + pu + q.
    \end{align*}
    As the discriminant $4p^3 + 27q^2$ is manifestly positive, by Cardano's cubic formula the real root is
    \begin{multline}\label{Eq:Wangy}
        y(t)^2 = \sqrt[3]{\frac{\sin^6(\theta)}{\cos^2(\theta)}6(\tau-t) + \sqrt{\frac{\sin^{12}(\theta)}{\cos^4(\theta)}36(\tau-t)^2 + \frac{\sin^{18}(\theta)}{\cos^6(\theta)}}}\\
        + \sqrt[3]{\frac{\sin^6(\theta)}{\cos^2(\theta)}6(\tau-t) - \sqrt{\frac{\sin^{12}(\theta)}{\cos^4(\theta)}36(\tau-t)^2 + \frac{\sin^{18}(\theta)}{\cos^6(\theta)}}}.
    \end{multline}
    Pulling out the trig functions and substituting back into \eqref{Eq:WangLambdaStep2}, and then into \eqref{Eq:WangLambdaStep1}, yields
    \begin{multline}\label{Eq:WangDriverDown}
    \lambda_\theta (t) = \\\frac{4}{3}\cos(\theta) - \frac{4}{3}\left( \sqrt[3]{6(\tau-t)+\sqrt{36(\tau-t)^2+\frac{\sin^6(\theta)}{\cos^2(\theta)}}} +\sqrt[3]{6(\tau-t)-\sqrt{36(\tau-t)^2+\frac{\sin^6(\theta)}{\cos^2(\theta)}}} \right)^{3/2},
    \end{multline}
    and the claimed formula for $\xi_\theta$ then follows from reversal and \eqref{Eq:WangTerminalLambda}.
    
    The formula for $\tau_\theta$ comes from sending $t \rightarrow \tau_\theta^-$ in \eqref{Eq:WangMinimizerYtImplicit}, which yields 
    \begin{align}\label{Eq:WangMinimizerHcapTime}
        \tau_\theta = \frac{1}{12}\cos^2(\theta) + \frac{1}{4}\sin^2(\theta) = \frac{1}{6}(1 - \frac{1}{2}\cos(2\theta)).
    \end{align}

The welding formula \eqref{Eq:WangWeld} follows from conjugating the welding $\omega$ on $\mathbb{R}$ constructed in \cite{MRW} for a smooth geodesic pair by the coordinate change to the chordal Loewner setting.  We start by considering $\gamma_{\theta}$ for fixed $0 < \theta < \pi$.  As noted above in Remark \ref{Remark:RemainingWang}($\ref{Remark:WangUniversalCurve}$), $\gamma_{\theta}^* := 1/\bar{\gamma}_{\theta}$ is the hyperbolic geodesic in $\mathbb{H}\bs \gamma_{\theta}$ from $e^{i\theta}$ to $\infty$.  We wish to say that $\Gamma = \Gamma_{\theta} := \gamma_{\theta} \cup \gamma_{\theta}^*$ is the conformal image of the $C^1$-geodesic pair $\gamma_{1,\theta'} \cup \gamma_{2,\theta'}$ in $(\mathbb{D}; e^{i\theta'}, -e^{-i\theta'}, 0)$ of \cite[Corollary 2.3]{MRW}, where $\theta' = \frac{\pi}{2}-\theta$.\footnote{So note $\gamma_{j,\beta}$ refer to the curves in $\mathbb{D}$, while $\gamma_{\alpha}$ to the curve in $\mathbb{H}$.  We use the former notation to stay close to the nomenclature used in \cite{MRW}.} See Figure \ref{Fig:UniversalCurveWang}.  By the uniqueness of smooth geodesic pairs \cite[Theorem 3.9]{MRW}, it suffices to show that $\Gamma$ is at least $C^1$ in its arc-length parametrization.  Indeed, note that the downwards driving function for $\Gamma$ is
    \begin{align*}
        \lambda_\theta(t) = \begin{cases}
            \xi_{\theta}(\tau_{\theta} -t) - \xi_{\theta}(\tau_{\theta}) & 0 \leq t \leq \tau_{\theta} \\
            - \xi_\theta(\tau_\theta) & \tau_{\theta} < t.
        \end{cases}
    \end{align*}
    In particular, $\lambda_\theta$ is smooth away from $t = \tau_{\theta}$, and furthermore has the same $C^{3/2}$-regularity at $t=\tau_{\theta}$ that $\xi_{\theta}$ does as $t=0$ (recall \eqref{Eq:WangDriverExpansion}).  Hence by the correspondence between driver and curve regularity \cite{Wong}, $\Gamma$ is $C^{2-\epsilon}$ near $e^{i\theta}$ in its capacity parametrization.\footnote{In fact, $\Gamma$ is actually weakly $C^{1,1}$ there, as we discuss below in \S\ref{Sec:CorollaryWong}. The point here, however, is that $\Gamma$ is at least $C^1$.}  In particular, $\Gamma$'s unit tangent vector varies continuously, and so $\Gamma$ is the claimed image of the $C^1$ geodesic pair $\gamma_{1,\theta'} \cup \gamma_{2,\theta'}$ in $(\mathbb{D};e^{i\theta'}, -e^{-i\theta'},0)$ for some $\theta' \in (-\pi/2,\pi/2)$.  Noting that the \Mob transformation $M_\alpha$ in \eqref{Eq:GeodesicMobius} with $\alpha=\theta$ sends the triple $(0, e^{i\theta}, \infty)$ to $(-e^{-i(\frac{\pi}{2}-\theta)},0,e^{i(\frac{\pi}{2}-\theta)})$, we see we may take $\theta' = \frac{\pi}{2}-\theta$, as claimed. In particular, $\gamma_{\theta} = \Gamma([0,\tau_\theta])$ corresponds to $\gamma_{2,\theta'}$ under $M_\theta$, as in Figure \ref{Fig:UniversalCurveWang} (with $\alpha$ replaced by $\theta$).  
    
    In what follows we use the notation of \cite[Figure 1]{MRW}.  Set $B = 
    \sin(\theta) + (\frac{\pi}{2}-\theta)\cos(\theta)$ and post-compose by the unique conformal map $G:\mathbb{D} \bs \gamma_{2,\theta'} \rightarrow \mathbb{C} \bs (-\infty, B]$ which maps the triple $(-i, e^{i\theta'},0)$ to $(-\frac{\pi}{2}\cos(\theta), B, \infty)$.  Thus $G \circ M_\theta$ sends the two sides of $\gamma_{\theta}$ to (portions of) two sides of the slit $(-\infty, B]$.  By the explicit construction in \cite[Ex. 3.1]{MRW}, the welding in the latter slit-plane setting is simply the shift $\omega(x) = x + 2\pi \cos(\theta)$ with
    \begin{align*}
        \omega: \Big(-\infty, -\sin(\theta) - \Big(\frac{3\pi}{2}-\theta\Big)\cos(\theta) \Big] \rightarrow \Big(-\infty,-\sin(\theta) + \Big(\frac{\pi}{2} + \theta \Big)\cos(\theta)\Big].
    \end{align*}
   That is, $M_\theta^{-1} \circ G^{-1}(x) = M_\theta^{-1}\circ G^{-1}(\omega(x)) \in \gamma_\theta$ for $x\leq-\sin(\theta)-(3\pi/2 - \theta)\cos(\theta)$.  As we seek the welding $\varphi_{\theta}:[a_{\theta}, 0] \rightarrow [0,b_{\theta}]$ giving the identifications generated by the chordal Loewner flow, we apply the conformal map $f: \mathbb{C} \bs (-\infty, B] \rightarrow \mathbb{H}$ given by $f(z) = -1/(i\sqrt{z-B})$, where $\log(z)$ is chosen so that $-\pi/2 \leq \Arg(z) \leq \pi/2$, and thus obtain $\tilde{\varphi} = \tilde{\varphi}_{\theta} := f \circ \omega^{-1} \circ f^{-1}$, 
    \begin{align*}
        \tilde{\varphi}(x) = \frac{-x}{\sqrt{1+2\pi \cos(\theta)x^2}},
    \end{align*}
    mapping $[-1/\sqrt{2 \sin(\theta) - 2\theta \cos(\theta)},0]$ to $[0,1/\sqrt{2\sin(\theta) + 2 (\pi-\theta) \cos(\theta)}]$.  However, $\tilde{\varphi}$ is not the Loewner-normalized welding if $f \circ G \circ M_\theta$ is not $z + O(1)$ as $z \rightarrow \infty$, corresponding to the hydrodynamic normalization \eqref{Eq:LoewnerNormalize}.  The map $G$ is obtained as the Schwarz reflection of the map $G_{\theta'}$ across the imaginary axis, where 
     \begin{align*}
        G_{\theta'}(z) = \frac{1}{2}\Big(z + \frac{1}{z} \Big) - i \cos(\theta)\log(z)
    \end{align*}    
    \cite[Lemma 2.2]{MRW}.  By noting
    \begin{align*}
        G_{\theta'}'(z) = \frac{(z-e^{i\theta'})(z+e^{-i\theta'})}{2z^2},
    \end{align*}
    we see
    \begin{align*}
        G(z) = B +  \frac{e^{i\theta'}+e^{-i\theta'}}{4e^{2i\theta'}}(z-e^{i\theta'})^2 + O(z-e^{i\theta'})^3, \qquad \mathbb{D} \ni z \rightarrow e^{i\theta'},
    \end{align*}
    and thus find 
    \begin{align*}
        f \circ G \circ M_\theta(z) = \frac{z}{\sqrt{2\sin^3(\theta)}} + O(1), \qquad z \rightarrow \infty.
    \end{align*}
    Hence after post-composing by $z \mapsto \sqrt{2\sin^3(\theta)}\,z$, we arrive at
    \begin{align*}
        \varphi_{\theta}(z) := \sqrt{2\sin^3(\theta)}\, \tilde{\varphi}_{\theta}\big(x/\sqrt{2\sin^3(\theta)}\big) = \frac{-x}{\sqrt{1+\pi \frac{\cos(\theta)}{\sin^3(\theta)}x^2}},
    \end{align*}
    which maps $[-\sin^{3/2}(\theta)/\sqrt{\sin(\theta) - \theta \cos(\theta)},0]$ to $[0,\sin^{3/2}(\theta)/\sqrt{\sin(\theta) +  (\pi-\theta) \cos(\theta)}]$.


    \bigskip
    
    We show part $(\ref{Thm:WangUniversal})$ for a welding and driver corresponding to fixed $0 < \theta < \pi/2$; the argument for $\pi/2 < \theta < \pi$ is similar.  Note that the $\varphi_\theta$ in \eqref{Eq:WangWeld} is defined for all $x<0$, and we claim that for any $u<0$, $\varphi_\theta|_{[u,0]}$ welds a scaled EMP curve $r_u \gamma_\alpha$ for some angle $0 < \alpha = \alpha_u < \pi/2$ (here the scale factor $r_u$ corresponds to welding $\varphi_\theta|_{[u,0]}$ with the centered upwards Loewner flow map).  Indeed, note that if we re-scale $\gamma_\theta$ by $c_\theta := \sqrt{\pi\cos(\theta)/\sin^3(\theta)}$, the corresponding welding (in the Loewner normalization) is
    \begin{align}\label{Eq:WangRescale}
        c_\theta \varphi_\theta(x/c_\theta) = \frac{-x}{\sqrt{1+x^2}},
    \end{align}
    which is independent of $\theta$.  Thus \eqref{Eq:WangRescale} is universal in the sense of the theorem statement for generating EMP curves with tip at angle $0 < \alpha <\pi/2$, and as \eqref{Eq:WangRescale} is a fixed re-scaling of $\varphi_\theta$, we see that $\varphi_\theta$ is also universal.  Hence $\varphi_\theta|_{[u,0]}$ generates a scaled EMP curve as claimed.
    
    By \eqref{Eq:WangWeldInterval}, $\theta \mapsto -x_\theta/y_\theta$ is strictly monotonic, and so the $\alpha$ of the EMP curve $r_u \gamma_\alpha$ generated by $\varphi_\theta$ on $[u,0]$ is entirely determined by the ratio $-u/\varphi_\theta(u)$. Solving $-u/\varphi_\theta(u) = -x_\alpha/y_\alpha$ yields
    \begin{align*}
        u_\alpha = -\sqrt{\frac{\sin^3(\theta)}{\cos(\theta)} \cdot \frac{\cos(\alpha)}{\sin(\alpha)-\alpha \cos(\alpha)}},
    \end{align*}
    and we observe that the scale factor $r_u$ is determined by $u_\alpha = r_u x_\alpha$, yielding
    \begin{align*}
        r_u = \sqrt{\frac{\sin^3(\theta)}{\cos(\theta)} \cdot \frac{\cos(\alpha)}{\sin^3(\alpha)}},
    \end{align*}
    as claimed.
       
    The same argument also applies to the driving function: $\xi_\theta$ is defined for all $t \geq 0$, and $c_\theta \xi_\theta(t/c_\theta^2)$ is independent of $\theta$, showing $\xi_\theta|_{[0,t]}$ generates a scaled (and translated) EMP curve $r_{u(t)}\gamma_\alpha + \xi_\theta(t)$ for any $t>0$ under its upwards Loewner flow.  To determine the $t$ corresponding to a given $\alpha$, we note that $\varphi_\theta$ and $\xi_\theta$ generate the same curves, and that the Loewner time for $r_u \gamma_\alpha$ is 
    \begin{align*}
        r_u^2 \tau_\alpha = \frac{\sin^3(\theta)\cos(\alpha)}{6\cos(\theta)\sin^3(\alpha)}\Big( 1 - \frac{1}{2}\cos(2\alpha) \Big) =t_\alpha.
    \end{align*}
    by the scaling relation for half-plane capacity and \eqref{Eq:WangMinimizerHcapTime}.  Thus $\xi_\theta|_{[0,t_\alpha]}$ generates $r_u \gamma_\alpha + \xi_\theta(t_\alpha)$.

    \bigskip
    For part $(\ref{Thm:Wang0+})$, we start by proving uniform convergence of the drivers $\xi_\theta$ to
    \begin{align}\label{Eq:theta0driver}
        \xi_0(t) = -\frac{8}{\sqrt{3}}\sqrt{t}
    \end{align}
    as $\theta \rightarrow 0^+$.  As we will need to attach hyperbolic geodesics from $\gamma_\theta(\tau_\theta)$ and $\gamma_0(\tau_0)$ to $\infty$, we actually show uniform convergence $\tilde{\xi}_{\theta,T} \rightarrow \tilde{\xi}_{0,T}$ as $\theta \rightarrow 0^+$, where, for $\alpha \geq 0$ and fixed $T > \tau_\alpha$,
    \begin{align*}
        \tilde{\xi}_{\alpha,T}(t) := \begin{cases}
            0 & 0 \leq t \leq T-\tau_\alpha,\\
            \xi_\alpha\big(t-(T-\tau_\alpha) \big) & T - \tau_\alpha < t \leq T.
        \end{cases}
    \end{align*}
    Thus $\tilde{\xi}_{\alpha,T}$ is the upwards driver $\lambda_{\alpha}(T-t) - \lambda_\alpha(T)$ for the hull which is $\gamma_\alpha$ followed by $T-\tau_\alpha$ units of time of the hyperbolic geodesic from $\gamma_\alpha(\tau_\alpha)$ to $\infty$ in $\mathbb{H} \bs \Fill(\gamma_\alpha)$.  Now, since $\tau_\theta \rightarrow \tau_0$ and $\theta \mapsto \xi_\theta(t)$ is continuous at $\theta=0^+$, point-wise convergence $\tilde{\xi}_{\theta,T}(t) \rightarrow \tilde{\xi}_{0,T}(t)$ is clear.  Furthermore, a calculation shows that 
    \begin{align*}
        \partial_\theta \tilde{\xi}_{\theta,T}(t) = \frac{\partial \xi_\theta}{\partial \theta}(t - (T-\tau_\theta)) \tau_\theta' >0    
    \end{align*}
    when $0 < \theta < \pi/2$, $T-\tau_\theta \leq t \leq T$, and hence the point-wise limit is monotone, allowing us to upgrade to the claimed uniform convergence on the compact interval $[0,T]$ by the classical Dini theorem.
    
    
    Write $\tilde{\gamma}_\alpha$ for the curve which is $\gamma_\alpha = \gamma_\alpha[0,\tau_\alpha]$ followed by the hyperbolic geodesic $\eta_\alpha$ from $\gamma_\alpha(\tau_\alpha)$ to $\infty$ in $\mathbb{H}\bs \Fill(\gamma_\alpha[0,\tau_\alpha])$, and $g_\alpha(t,z)$ for $\tilde{\gamma}_\alpha$'s mapping-down function.  By \cite[Prop. 4.7]{Lawler}, the above driver convergence yields that, for any $T>0$ and $\epsilon>0$, one has
    \begin{align}\label{Conv:DoubleUniform}
        g_\theta(t,z) \xrightarrow[\theta \rightarrow 0^+]{unif} g_0(t,z) \qquad \text{ on } \qquad [0,T] \times \{\, z \; : \; \dist\big(z, \Fill(\tilde{\gamma}_0[0,T])\big)>\epsilon  \,\}.
    \end{align}
    We wish to say that $g_\theta(\tau_\theta, \cdot) \rightarrow g_0(\tau_0,\cdot)$ uniformly on compacts of $\mathbb{H} \bs \Fill(\gamma_0)$, which easily follows (note that we are comparing the maps at the different times $\tau_\theta$ and $\tau_0$).  Indeed, for a fixed compact $K$ of $\mathbb{H} \bs \Fill(\gamma_0)$, $\tilde{\gamma}_0([\tau_0,\tau_\theta])\cap K = \emptyset$ when $\theta$ is small, and so $g_\theta(\tau_\theta, \cdot)$ is defined on $K$ for all sufficiently-small $\theta$ by \eqref{Conv:DoubleUniform}. For $z \in K$ and $\theta$ close to zero,
    \begin{align}
        |g_\theta(\tau_\theta, z) - g_0(\tau_0,z)| &\leq |g_\theta(\tau_\theta, z) - g_0(\tau_\theta,z)| + |g_0(\tau_\theta, z) - g_0(\tau_0,z)| \notag\\
        &\leq \epsilon_1 + C\sqrt{\diam(\tilde{\gamma}_0[0,\tau_\theta])\text{osc}(\tilde{\gamma}_0,\tau_\theta-\tau_0,\tau_\theta)} \label{Ineq:OneEstimate}\\
        &\leq \epsilon_1 + \epsilon_2\notag
    \end{align}
    for all small $\theta$, where the two estimates in \eqref{Ineq:OneEstimate} are by \eqref{Conv:DoubleUniform} and \cite[Lemma 4.1]{Lawler}, respectively, and $\text{osc}(\eta, \delta, T)$ is the $\delta$-modulus of continuity of the curve $\eta$ up to time $T$,
    \begin{align*}
        \text{osc}(\eta, \delta, T):= \sup\{\, |\eta(t)-\eta(s)| \; : \; 0 \leq s,t \leq T, \; |s-t| \leq \delta \,\}.
    \end{align*}
    We conclude that we have the claimed locally-uniform convergence, and hence also, recalling \eqref{Eq:WangTerminalLambda}, the locally-uniform convergence of the centered mapping-down functions $g_\theta(\tau_\theta, \cdot) - \lambda_\theta(\tau_\theta)$ to $g_0(\tau_0, \cdot) - \lambda_0(\tau_0)$.  Call $F_\theta(\cdot)$ and $F_0(\cdot)$ the inverses of the latter two maps.  Then $F_\theta \rightarrow F_0$ locally-uniformly on $\mathbb{H}$, and in particular, writing a given segment $\eta_0(I_0)$ of the hyperbolic geodesic $\eta_0$ from $1$ to $\infty$ in $\mathbb{H} \bs \Fill(\gamma_0)$ as $F_0([iy_1,iy_2])$ for some $0 < y_1 < y_2$, we have the Hausdorff convergence
    \begin{align}\label{Conv:HausSegment}
        F_\theta([iy_1,iy_2]) \xrightarrow[]{Haus} F_0([iy_1,iy_2]),
    \end{align}
    where we observe that $F_\theta([iy_1,iy_2]) \subset \gamma_\theta^*$, the hyperbolic geodesic from $\gamma_\theta(\tau_\theta)$ to $\infty$ in $\mathbb{H}\bs \gamma_\theta$, which, as we have seen, is the reflection  $1/\overline{\gamma}_\theta$.
    
    We claim that it follows that $\eta_0 = \gamma_0^*$, i.e. $\gamma_0$'s reflection in $\partial \mathbb{D}$ is the hyperbolic geodesic.  To this end, we first show uniform convergence of the capacity-parametrized curves $\gamma_\theta$ to $\gamma_0$ on $[0,\tau_0-\epsilon]$ for any $\epsilon>0$.  Indeed, consider the Loewner energy
    \begin{align}\label{Eq:ChoppedEnergy}
        I_\epsilon(\theta) := \frac{1}{2}\int_\epsilon^{\tau_0} \dot{\xi}_\theta(s)^2ds,
    \end{align}
    which we claim is continuous in $\theta \geq 0$.  Noting that $\theta \mapsto \sin^6(\theta)/\cos^2(\theta)$ is increasing and that we have excised the singularity in $\dot{\xi}$ when $t$ and $\theta$ are both zero, through coarse bounds one readily obtains a function $g \in L^1([\epsilon,\tau_0])$ such that $\dot{\xi}_\theta(s)^2 \leq g(s)$ for all $0 \leq \theta \leq \pi/4$.  Dominated convergence then shows that $I_\epsilon(\cdot)$ is continuous, and hence bounded, on $0\leq \theta \leq \pi/4$, say.  As $I_\epsilon(\theta)$ is the energy of $\gamma_\theta([0,\tau_0-\epsilon])$, by property ($\ref{Lemma:CurvesCompact}$) of the Loewner energy in \S\ref{Sec:BackgroundLoewnerEnergy}, we thus see that for any sequence $\theta_n \rightarrow 0$, $\gamma_{\theta_n}$ has a uniform limit $\gamma = \gamma(\{\theta_n\})$ on $[0,\tau_0-\epsilon]$.  As above for the drivers $\tilde{\xi}_{\theta,T}(t)$, we have
    \begin{align*}
        \lambda_\theta(t) = \xi_\theta(\tau_\theta -t) - \xi_\theta(\tau_\theta) \xrightarrow[]{unif} \xi_0(\tau_0-t) - \xi_0(\tau_0) = \lambda_0(t)
    \end{align*}
    on $[0,\tau_0-\epsilon]$, and so by \cite[Lemma 4.2]{LMR}, $\gamma = \gamma_0|_{[0,\tau_0-\epsilon]}$, and we conclude that the limit is unique and therefore that $\gamma_\theta \rightarrow \gamma_0$ uniformly on $[0,\tau_0-\epsilon]$, as claimed.  
    
    From \eqref{Conv:HausSegment} we know that the reflections
    \begin{align*}
        1/\overline{F_\theta([iy_1,iy_2])} \xrightarrow[]{Haus} 1/\overline{F_0([iy_1,iy_2])}
    \end{align*}
    for any $0 < y_1 < y_2$, while we now see from the uniform convergence that $1/\overline{F_\theta([iy_1,iy_2])} = \gamma_\theta([t_{1,\theta},t_{2,\theta}])$ also converges in the Hausdorff sense to a segment $\gamma_0([t_1,t_2])$ of $\gamma_0$.  By uniqueness of the limit we conclude that $1/\overline{\gamma_0([t_1,t_2])} = F_0([iy_1,iy_2])$, and thus that $\eta_0 \subset \gamma_0^*$, and therefore that $\eta_0 = \gamma_0^*$, as claimed.  Furthermore, since $\gamma_0$'s reflection $\gamma_0^*$ is a hyperbolic geodesic, $\gamma_0$ itself is the hyperbolic geodesic from 0 to 1 in its component of $\mathbb{H} \bs \gamma_0^*$.  That is, $\gamma_0 \cup \gamma_0^*$ is a boundary geodesic pair in $(\mathbb{D}; 0, \infty, 1)$.

    We can now show the algebraic formula \eqref{Eq:Variety} by constructing an explicit rational function $R$ such that
    \begin{align*}
        R^{-1}(\hat{\mathbb{R}}) = \gamma_0 \cup \gamma_0^* \cup \hat{\mathbb{R}} \cup \overline{\gamma}_0 \cup \overline{\gamma}_0^*,
    \end{align*}
    where $\hat{\mathbb{R}} = \mathbb{R} \cup \{ \infty\}$, and the bar continues to denote complex conjugation.  The argument is essentially identical to that for \cite[Prop. 4.1]{PeltWang}, although our context is slightly different than the ``geodesic multichord'' setting, as our two geodesics share a common boundary point (i.e. we are working with Krusell's ``fused multichords'' in \cite{Krusell}).  We sketch the details for the convenience of the reader, and refer to Figure \ref{Fig:Gamma0} for a visualization.  
    
    Write $\Omega_0$ for the component of $\mathbb{H}\bs \gamma_0^*$ containing $\gamma_0$, and take a conformal map $R_0$ from the bounded component of $\mathbb{H} \bs \gamma_0$ to the lower half plane $\overline{\mathbb{H}} := \{\, \overline{z} \,:\, z \in \mathbb{H}\,\}$. Since $\gamma_0$ is the hyperbolic geodesic from $0$ to $1$ in $\Omega_0$, we can Schwarz-reflect across $\gamma_0$ in the domain and across $\mathbb{R}$ in the codomain to extend $R_0$ to map the unbounded component of $\Omega_0 \bs \gamma_0$ to the upper half plane $\mathbb{H}$.  Note that reflecting across $\gamma_0$ in $\mathbb{H}$ sends a unique point $0 < x_0 <1$ to $\infty$.  By post-composing with an element of $\Aut(\nH)=PSL_2(\mathbb{R})$, we may assume that $R_0(x_0) = \infty$ and $R_0(0) = R_0(1)-1=0$.  
    
    Since $\gamma_0^*$ is the hyperbolic geodesic in $\mathbb{H} \bs \Fill(\gamma_0)$, we can again reflect $R_0$ to map the right-most component of $\mathbb{H} \bs \gamma_0^*$ back to the lower half plane $\overline{\mathbb{H}}$, taking $\gamma_0^*$ onto $[1,\infty)$. The map thus constructed sends $\mathbb{R} \cup \gamma_0 \cup \gamma_0^*$ to $\hat{\mathbb{R}}$, permitting us to once more Schwarz-reflect across $\mathbb{R}$ to obtain a meromorphic, degree-three branched cover $R$ of the sphere, which is therefore a rational function.  By construction, $R$ fixes $0,1$ and $\infty$ (and so is uniquely normalized), and has orders 2, 3 and 2 at these points, respectively.  Recalling the simple pole at $x_0$, we conclude $R(z) = \frac{z^2(az+b)}{z-x_0}$, and the constraints $R(1)=1$ and $R(z)-1 = \frac{c(z-1)^3}{z-x_0}$ then yield
    \begin{align*}
        R(z) &= \frac{z^2(z-3)}{1-3z} = 1 + \frac{(z-1)^3}{1-3z} = \Real(R(z)) + i \frac{2y\big(y^2(4-3x)-3x(x-1)^2\big)}{(1-3x)^2+9y^2}.
    \end{align*}
    We thus obtain \eqref{Eq:Variety} for the pre-image $R^{-1}(\mathbb{R}) = \{z \; : \; \Imag(R(z)) = 0\}$.


    Lastly, we note that the $\pi/3$ intersection of $\gamma_0(\tau_0-)$ with $\mathbb{R}$ follows from either the explicit formula for the intersection angle $\pi \theta$ of such drivers \cite[Prop. 3.2]{LMR} using $\kappa = 8/\sqrt{3}$, or from observing that $R(z)-1 = (z-1)^3/(1-3z)$ pulls back an interval $(-\epsilon, \epsilon)$ into six arcs that meet at equal angles around $x=1$, two of which lie on $\mathbb{R}$.
\end{proof}

\begin{remark}\label{Remark:FurtherComputations}
We note one other possible explicit calculation: for $0 \leq t \leq \tau$, the Loewner energy of $\gamma_\theta([0,t])$ is
        \begin{align*}
            I(\gamma_\theta([0,t])) = -4\log\Big(\sin^2(\theta) + \frac{\cos^2(\theta)}{\sin^4(\theta)} y(t)^4 \Big),
        \end{align*}
        where $y(t)^4 = \big(\Imag(g_t(e^{i\theta})-\lambda(t))\big)^4$ is explicitly given by the square of \eqref{Eq:Wangy}. To see this, recall that the curve remaining after mapping down $\gamma_\theta([0,t])$ is the minimizer through angle $\alpha(t) := \arctan(y(t)/x(t))$ (by symmetry we may assume that $0 < \theta < \pi/2$, and thus likewise that $\alpha \in (0,\pi/2)$), and so has energy $-8\log(\sin(\alpha(t)))$.  Recalling \eqref{Eq:WangLambdaStep2}, we thus see the energy of the first portion $\gamma_\theta([0,t])$ of the curve is
    \begin{align*}
        -8\log(\sin(\theta)) + 8\log(\sin(\alpha(t))) &=  -8\log(\sin(\theta)) - 4\log\Big(1 + \frac{\cos^2(\theta)}{\sin^6(\theta)}y(t)^4\Big),
    \end{align*}
    as claimed.

\end{remark}
We close section \S\ref{Sec:Wang} by noting two corollaries of Theorem 3.3.
\subsection{Corollary: boundary geodesic pairs}
Let $D \subsetneq \mathbb{C}$ be a simply-connected domain with boundary prime ends $a$ and $b$ and let $\zeta \in D$.  In \cite{MRW}, Marshall, Rohde and Wang defined a \emph{geodesic pair} in $(D; a,b,\zeta)$ as a simple curve $\gamma \subset D$, continuously parametrized on $(0,\infty)$, say, such that $\gamma(\tau)=\zeta$ for some $\tau \in (0,\infty)$, and where $\gamma(0,\tau)$ is the hyperbolic geodesic in $D\bs \gamma[\tau,\infty)$ from $a$ to $\zeta$, and $\gamma(\tau,\infty)$ is the hyperbolic geodesic in $D\bs \gamma(0,\tau]$ from $\zeta$ to $b$.  In \cite{MRW}, $\zeta$ always lies in the interior of $D$, but as a corollary to Theorem \ref{Thm:Wang}$(\ref{Thm:Wang0+})$ we may extend this definition to include $\zeta \in \partial D$.  Indeed, when $a_1, a_2$ and $\zeta$ are boundary prime ends of $D$, we define a \emph{boundary geodesic pair} in  $(D;a_1,a_2,\zeta)$ to be two simple curves $\gamma_1, \gamma_2 \subset D$ which connect $a_1$ to $\zeta$ and $\zeta$ to $a_2$, respectively, do not intersect in $D$, and have the property that $\gamma_j$ is the hyperbolic geodesic from $a_j$ to $\zeta$ in its component of $D \bs \gamma_{3-j}$, $j=1,2$. We require that $\zeta$ is distinct from the $a_j$ but allow $a_1=a_2$.

The following is the boundary geodesic pair version of \cite[Thm. 2.5]{MRW}.  
\begin{corollary}\label{Cor:BoundaryGeodesic}
    If $D \subset \mathbb{C}$ is a simply-connected domain with boundary prime ends $a_1,a_2$ and $\zeta$, where $\zeta \notin \{a_1,a_2\}$, there is a unique boundary geodesic pair $\gamma_1 \cup \gamma_2$ in $(D;a_1,a_2,\zeta)$.  If $a_1 \neq a_2$ and $\partial D$ has a tangent at $a_j$, then $\gamma_j$ intersects $\partial D$ orthogonally at $a_j$.  If $\partial D$ has a tangent at $\zeta$, then $\gamma_1$, $\gamma_2$ and $\partial D$ form three angles of $\pi/3$ at $\zeta$, and the hyperbolic geodesic $\eta_j$ in $D$ from $a_j$ to $\zeta$ bisects the angle between the $\gamma_j$.  If $a_1=a_2$, then $\gamma_1$, $\gamma_2$ and $\partial D$ form three angles of $\pi/3$ when $\partial D$ has a tangent at either $a_1$ or $\zeta$.
\end{corollary}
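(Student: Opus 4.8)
The plan is to reduce the entire statement to the model pair $\gamma_0 \cup \gamma_0^*$ in $(\mathbb{H}; 0, \infty, 1)$ already produced by Theorem \ref{Thm:Wang}$(\ref{Thm:Wang0+})$, and to transport existence, uniqueness, and the angle data along a Riemann map. First, assuming $a_1 \neq a_2$, I would fix a conformal map $\phi \colon D \to \mathbb{H}$ carrying the prime ends $(a_1, a_2, \zeta)$ to $(0, \infty, 1)$. Since the hyperbolic metric is a conformal invariant, $\phi$ sends hyperbolic geodesics to hyperbolic geodesics and respects the component structure of complements and the prime ends, so applying $\phi^{-1}$ to the model curves yields a pair satisfying the defining relations of a boundary geodesic pair in $(D; a_1, a_2, \zeta)$. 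This gives existence.

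For uniqueness, let $\gamma_1 \cup \gamma_2$ be an arbitrary boundary geodesic pair and push it to $\mathbb{H}$ by $\phi$. I would then run the Schwarz-reflection argument from the proof of Theorem \ref{Thm:Wang}$(\ref{Thm:Wang0+})$ (patterned on \cite[Prop. 4.1]{PeltWang} and \cite{MRW}): each of the three components of $\mathbb{H} \setminus (\gamma_1 \cup \gamma_2)$ maps conformally to $\mathbb{H}$, and because each $\gamma_j$ is a hyperbolic geodesic in its adjacent subdomain, the maps on neighboring components are anticonformal reflections of one another across $\gamma_j$ (and across the real segments). Iterating these reflections produces a single function on $\hat{\mathbb{C}}$ whose only possible singularities, lying over $0,1,\infty$, are removable or poles, hence a rational map $R$ of degree three. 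The combinatorics of the three regions force the local degrees $2,3,2$ over $0,1,\infty$, so after normalizing to fix these points $R$ is forced to equal $z^2(z-3)/(1-3z)$, exactly as in part $(\ref{Thm:Wang0+})$. Therefore $\gamma_1 \cup \gamma_2 = R^{-1}(\hat{\mathbb{R}}) \cap \overline{\mathbb{H}} = \gamma_0 \cup \gamma_0^*$, which proves uniqueness.

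The angle statements then read off from the same rational map and transport conformally. Ramification order $3$ at $\zeta$ means $R$ is locally $w \mapsto w^3$, whose real locus is six equally spaced rays; the three sectors inside $\mathbb{H}$ each subtend $\pi/3$, giving the three $\pi/3$ angles at $\zeta$. Order $2$ at $a_j$ makes $R$ locally $w \mapsto w^2$, whose $\mathbb{H}$-sectors are $(0,\pi/2)$ and $(\pi/2,\pi)$, forcing orthogonal intersection there. In the model, the geodesics from $0$ and from $\infty$ to $\zeta = 1$ are the semicircle on $[0,1]$ and the vertical ray above $1$, both meeting $\zeta$ vertically, i.e. along the bisector of the $\gamma_0$--$\gamma_0^*$ angle, which is the bisection claim. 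To carry these facts back to $D$ I would invoke the isogonality of $\phi$ at a boundary prime end where $\partial D$ has a tangent: under the tangent hypothesis at $a_j$ (resp. $\zeta$), $\phi$ preserves angles there, so orthogonality at $a_j$ and the three $\pi/3$ angles and bisection at $\zeta$ descend to $D$. The case $a_1 = a_2$ is handled by the same reflection philosophy with the appropriate degenerate model: now $\gamma_1,\gamma_2$ bound a bigon and three regions meet at both the common base and at $\zeta$, so the construction produces a degree-three map totally ramified over the two images, i.e. a \Mob conjugate of $w \mapsto w^3$, again rigid; this forces three $\pi/3$ angles at whichever of $a_1, \zeta$ carries a tangent, and uniqueness follows as before.

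I expect the main obstacle to be the uniqueness step -- specifically, showing that for an \emph{arbitrary} boundary geodesic pair, with no assumed symmetry and no a priori control on the intersection angle, the iterated Schwarz reflections are globally consistent and close up into a genuine degree-three rational map with the claimed ramification. This is precisely where the geodesic hypothesis must do the work: validity of the reflection across each $\gamma_j$ is equivalent to $\gamma_j$ being a hyperbolic geodesic in the relevant subdomain, and the $\pi/3$ angle at $\zeta$ should emerge as a \emph{consequence} of the degree-three branching rather than being assumed. A secondary technical point is the boundary conformality used to transport the angles, which is standard but must be invoked at prime ends possessing tangents rather than at ordinary boundary points.
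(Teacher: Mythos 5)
Your proposal is correct and follows essentially the same route as the paper: existence by conformally transporting the model pair $\gamma_0\cup\gamma_0^*$ from Theorem \ref{Thm:Wang}$(\ref{Thm:Wang0+})$, uniqueness via the Schwarz-reflection construction of a degree-three rational branched cover rigidified by its ramification over $0,1,\infty$ (reducing to $cz^3$ in the fused case $a_1=a_2$), and the angle and bisection claims read off from the local form of $R$ at the ramification points. You actually spell out the angle transport (isogonality at prime ends with tangents) in more detail than the paper does, but the argument is the same.
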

In the case of $\zeta \in D$, the tangent at $\zeta$ to the geodesic pair bisects the angle between the hyperbolic geodesic segments $\eta_j$ from $a_j$ to $\zeta$ in $D$ \cite[Thm. 2.5]{MRW}.  When $\zeta \in \partial D$ (and $\partial D$ has a tangent at $\zeta$), then $\eta_1$ and $\eta_2$ intersect tangentially at $\zeta$, and this no longer holds.  Corollary \ref{Cor:BoundaryGeodesic} says that the roles flip and that each of the $\eta_j$ now bisect the angle between the $\gamma_j$.  See Figure \ref{Fig:BoundaryGeodesics}.
\begin{figure}
    \centering
    \includegraphics[scale=0.1]{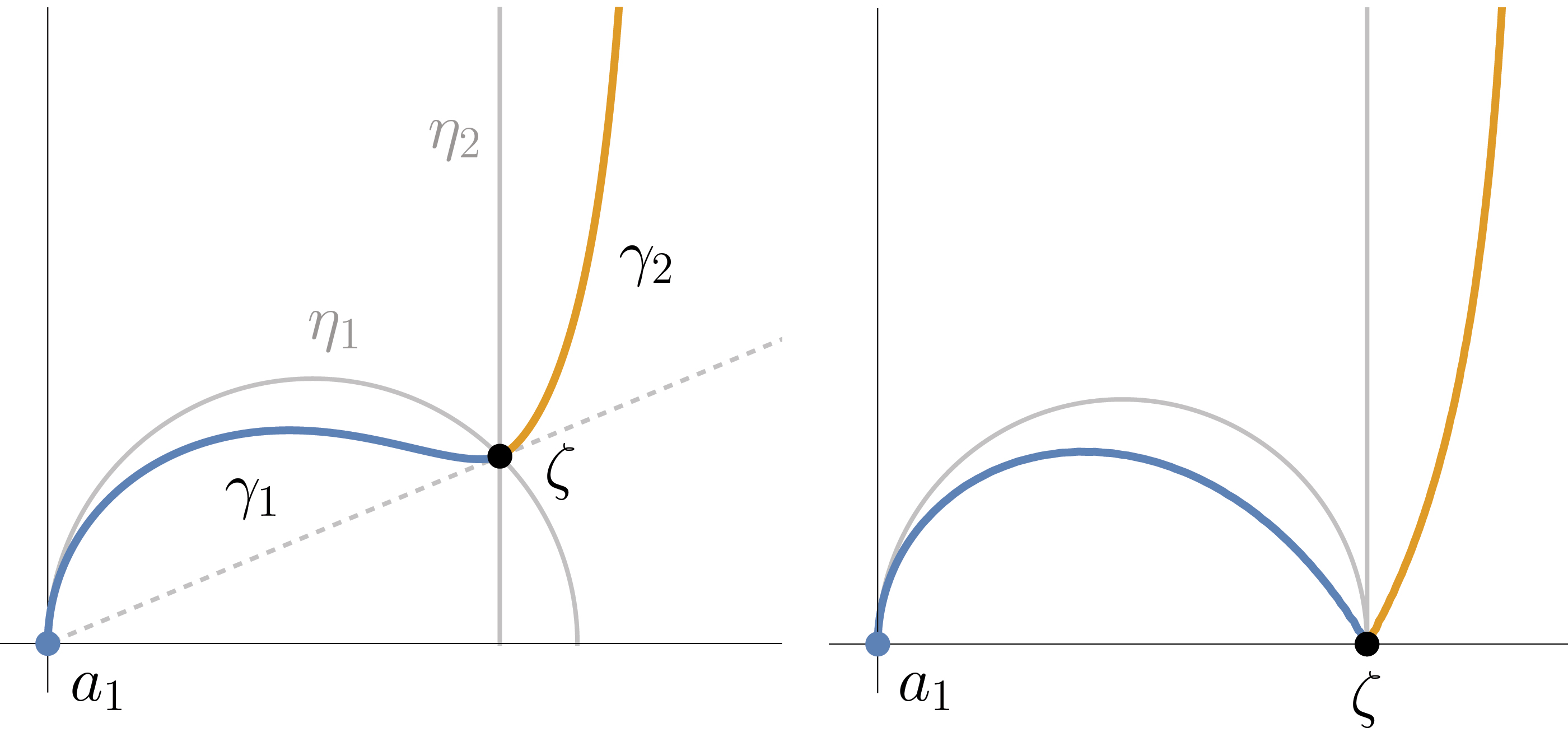}
    \caption{\small When $\zeta$ lies in the interior of $D$, the smooth geodesic pair bisects the angle between the hyperbolic geodesics $\eta_j$ connecting the $a_j$ to $\zeta$.  When $\zeta \in \partial D$, the roles are reversed.}
    \label{Fig:BoundaryGeodesics}
\end{figure}
\begin{proof}
    The proof of existence is nearly identical to that for \cite[Thm. 2.5]{MRW}; in the case $a_1 \neq a_2$, we simply apply conformal invariance to transport the geodesic pair $\gamma_0 \cup \gamma_0^*$ in $(\mathbb{H};0,\infty,1)$ constructed in Theorem   \ref{Thm:Wang}$(\ref{Thm:Wang0+})$ to the domain in question.  Uniqueness follows from the fact that, given a geodesic pair $(\eta_1 \cup \eta_2)$ in $(\mathbb{H};0,\infty,1)$, we can use it as in the proof of Theorem \ref{Thm:Wang}$(\ref{Thm:Wang0+})$ to build a degree-three analytic branched cover $R$ of $\chat$, resulting in the same variety \eqref{Eq:Variety} after post-composition by an element of $PSL(2,\mathbb{R})$. 
    
    When $a_1=a_2$, we can without loss of generality take $(D;a_1,a_1,\zeta) = (\mathbb{H};0,0,\infty)$, and we see that the two rays $\gamma_1 = \{\, re^{\pi i/3} \; : \; r>0 \,\}$ and $\gamma_2 = \{\, re^{2\pi i/3} \; : \; r>0 \,\}$ form a geodesic pair satisfying the stated properties.  Given any geodesic pair in $(\mathbb{H}; 0,0,\infty)$, the rational function $R$ we construct is again a third-degree branched cover of $\chat$, with ramification only above $0$ and $\infty$, each with degree three.  It follows that $R(z) =cz^3$ for some $c \in \mathbb{R} \bs \{0\}$, and we obtain the same pair of rays $\gamma_1 \cup \gamma_2$. 
\end{proof}

\subsection{Corollary: Wong's driver-curve regularity theorem is sharp}\label{Sec:CorollaryWong}
We recall the driver-curve regularity correspondence theorem of Carto Wong, which says that the (inverse) Loewner transform $\lambda \mapsto \gamma$ generally increases regularity by half a degree.
\begin{proposition}\cite[Thm. 4.7, Thm. 5.2, Thm. 6.2]{Wong}\label{Prop:Wong}
    Let $t \mapsto \gamma(t)$ be the capacity parametrization of a curve $\gamma \subset \mathbb{H}\cup\{0\}$ with driver $\lambda:[0,T]\rightarrow \mathbb{R}$, and set $\Gamma(t) := \gamma(t^2)$ for $0 \leq t \leq \sqrt{T}$.  If $\lambda \in C^\beta([0,T])$ for $\beta \in (1/2,3/2)\cup(3/2,2]$, then $\Gamma \in C^{\beta + 1/2}([0,\sqrt{T}])$.  If $\lambda \in C^{3/2}([0,T])$, then $\Gamma$ is weakly $C^{1,1}$ on $[0,\sqrt{T}]$.
\end{proposition}
\noindent Note that the parametrization of $\Gamma$ in terms of the square of the capacity time is only to deal with the technicality of how smooth curves necessarily ``jump'' off the real line like $2i\sqrt{t} + O(t)$ (see the expansion \eqref{Eq:InfinitesimalCurve} below), and so $t \mapsto \gamma(t)$ cannot be smooth at $t=0^+$.  For fixed positive $t$, however, there is no issue and $\gamma$ and $\Gamma$ have identical classes of regularity near $t$ and $\sqrt{t}$, respectively.   

From work on $\Gamma \mapsto \lambda$ direction of the Loewner transform by Rohde-Wang \cite[Thm. 1.1]{RohdeWang}, Wong's result gives optimal \Hol regularity for $\Gamma$ when $\beta \in (1/2,1) \cup (1,3/2)$.\footnote{Presumably this also holds for $\beta \in (3/2,2)$, cf. \cite[\S4.1]{RohdeWang}.  What we precisely mean is that $\lambda \in C^\beta$ if and only if $\Gamma \in C^{\beta +1/2}$ for $\beta \in (1/2,1) \cup (1,3/2)$.  For $\Gamma \in C^{1+1/2}$, \cite{RohdeWang} only shows that $\lambda \in C^{0,1}$, so we omit this value.  As we focus on the $\lambda \mapsto \Gamma$ direction, we do not further discuss the $\Gamma \mapsto \lambda$ map, other than to say that questions also remain in that direction about sharpness for certain values of $\beta$; see \cite[\S4.1]{RohdeWang}.} Wong was not sure, however, if the ``weak'' qualifier was needed in the case $\lambda \in C^{3/2}$ (see the paragraph after Theorem 5.2 in \cite{Wong}).  We recall again that the weak-$C^{1,1}$ class $C^{1,1}_w$ consists of those functions $\Gamma: (a,b) \rightarrow \mathbb{C}$ with Lipschitz derivative up to a logarithmic factor.  That is, $\Gamma \in C_w^{1,1}$ if $\Gamma'$ exists and there is $C < \infty$ such that 
\begin{align}\label{Ineq:WeakC11}
    |\Gamma'(s) - \Gamma'(t)| \leq C\Big(1 \vee \log \frac{1}{|s-t|} \Big)|s-t|
\end{align}
for all $s,t \in (a,b)$ with $s \neq t$, where $A \vee B := \max\{A,B\}$.  We say that $\Gamma$ is $C^{1,1}_w$ at a fixed $t$ if there is some $\delta>0$ such that $\Gamma'$ exists on $(t-\delta, t+\delta)$ and \eqref{Ineq:WeakC11} holds for all $s \in (t-\delta, t+\delta) \bs \{t\}$.  

As noted in the introduction, Lind and Tran \cite[Example 7.1]{LindTran} gave an example of a $C^{3/2}$ driver whose trace $\Gamma$ was $C^{1,1}$ but not $C^2$, and so one cannot always na\"{i}vely add half a degree of regularity and obtain the correct result.  It has not been known, however, if there exists a curve with $C^{3/2}$ driver where the logarithm in \eqref{Ineq:WeakC11} is the best one can do.  The EMP curves show that this is, indeed, the case.  We adopt the following terminology to state the result. 
\begin{definition}
For fixed $t$, we say a function $f$ which is $C^{1,1}_w$ at $t$ is \emph{sharply $C^{1,1}_w$} at $t$ if
\begin{align*}
    \limsup_{s \rightarrow t} \frac{|f'(s)-f'(t)|}{g(|s-t|)|s-t|} = +\infty
\end{align*}
whenever $g$ satisfies $g(x) = o(\log\frac{1}{x})$ as $x \rightarrow 0^+$.  That is, the logarithm in \eqref{Ineq:WeakC11} cannot be substituted with any function of slower growth.
\end{definition}
Equivalently, $f$ is sharply $C^{1,1}_w$ at $t$ if $f \in C^{1,1}_w$ and 
\begin{align*}
    \limsup_{s\rightarrow t} \frac{|f'(s)-f'(t)|}{|s-t|\log \frac{1}{|s-t|}} \neq 0.
\end{align*}
Indeed, the $\limsup$ collapses to zero if and only if $f'$ actually has a smaller order of growth.  

We can now state the main result of this section, which we phrase as a corollary to Theorem \ref{Thm:Wang}.

\begin{corollary}\label{Cor:WongSharp}
    The $\beta=3/2$ case of Wong's theorem is sharp.  In fact, for any $\theta \in (0,\pi)\bs \{\pi/2\}$, the geodesic pair in $(\mathbb{H}; 0,\infty, e^{i\theta})$ formed by the EMP curve $\gamma_\theta$ and its reflection $\gamma_\theta^*$ in $\partial \mathbb{D}$ has driver which is $C^{3/2}$ but trace whose capacity parametrization $t \mapsto \gamma(t)$ is sharply $C^{1,1}_w$ at $\tau_\theta$, the time yielding $\gamma(\tau_\theta)=e^{i\theta}$. 
\end{corollary}

\noindent Given \cite[Example 7.1]{LindTran}, Corollary \ref{Cor:WongSharp} also yields:

\begin{corollary}\label{Cor:Regularity}
    There exist simple curves with drivers of the same regularity ($C^{3/2}$), but where the half-plane capacity parametrizations have different regularity ($C^{1,1}$ versus sharply $C^{1,1}_w$). 
\end{corollary}

\begin{figure}
    \centering
    \includegraphics[scale=0.10]{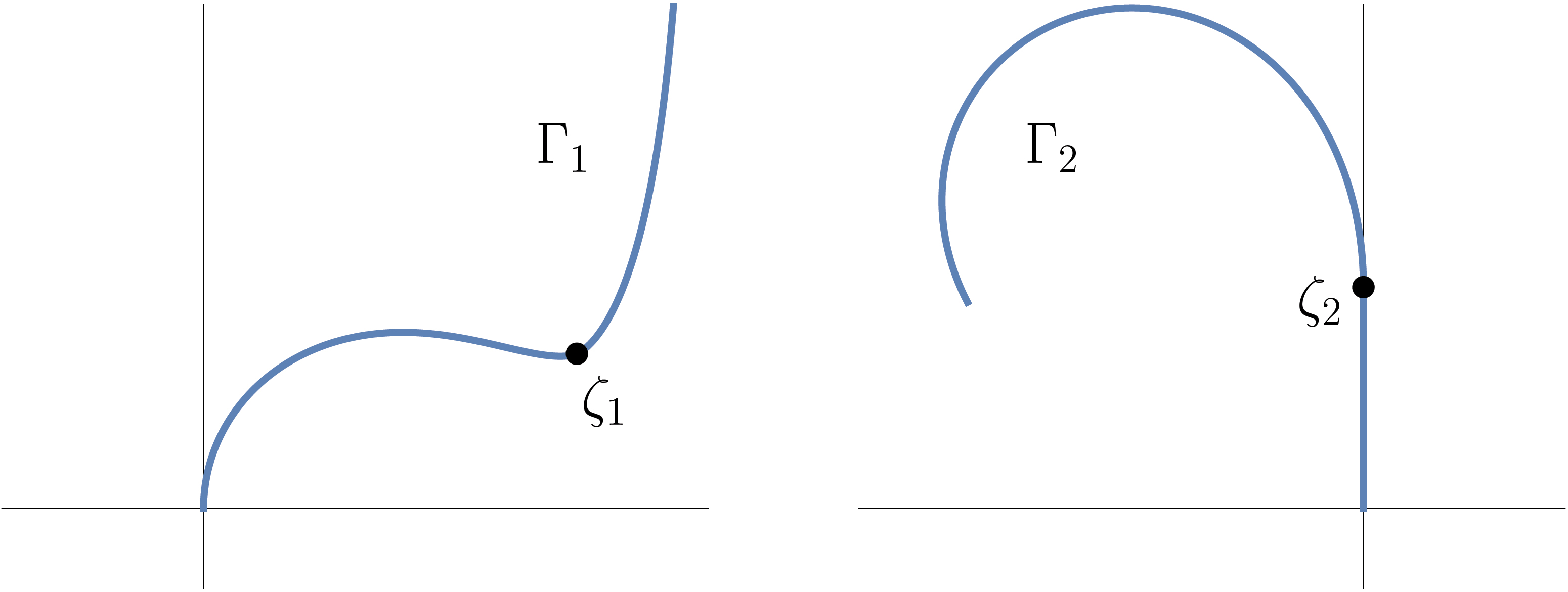}
    \caption{\small Curves $\Gamma_j$ whose drivers and conformal maps have the same regularity, but whose half-plane capacity parametrizations $\gamma_j$ have different regularity.  Owing to the non-smooth point $\zeta_j$ on each curve, both drivers are $C^{3/2}$ and the conformal maps $F_j$ from $\mathbb{H}$ satisfying $F_j(0)=\zeta_j$ have expansions $F_j'(0) = a_{j1} + a_{j2}z\log(z) + o\big(z\log(z)\big)$ for some $a_{jk} \neq 0$ as $z \rightarrow 0$ in the closed upper half plane.  While \cite[Example 7.1]{LindTran} showed the half-plane capacity parametrization of $\Gamma_2$ is $C^{1,1}$, we show that of $\Gamma_1$ is slightly worse, namely, $C^{1,1}_w$.  See Corollary \ref{Cor:WongSharp}, Example \ref{Eg:WongSharp} and Table \ref{Table:WongSharp}.
    }
    \label{Fig:WongSharp}
\end{figure}


\noindent For further discussion of these two examples see Figure \ref{Fig:WongSharp} and Table \ref{Table:WongSharp} and Example \ref{Eg:WongSharp} below.  

Our proof of Corollary \ref{Cor:WongSharp} will be aided by the following technical yet elementary result.
\begin{lemma}\label{Lemma:C11w} Let $I_j=(a_j,b_j)$, $j\in\{1,2\}$, be open intervals in $\mathbb{R}$.
    \begin{enumerate}[$(i)$]
       \item\label{Lemma:C11wComposition} Suppose $\gamma:I_2 \rightarrow \mathbb{C}$ is $C^{1,1}_w$ and $\sigma:I_1 \rightarrow I_2$ is $C^{1,1}_w$, with $\sigma'>0$ on $I_1$.  If $\gamma$ is not sharply $C^{1,1}_w$ at $t \in (a_2,b_2) \cap \sigma(I_1)$ and $\sigma$ is not sharply $C^{1,1}_w$ at $\sigma^{-1}(t)$, then $\gamma \circ \sigma$ is not sharply $C^{1,1}_w$ at $\sigma^{-1}(t)$.
        \item\label{Lemma:C11wInverse} If $\sigma:I_1 \rightarrow \mathbb{R}$ is $C^{1,1}_w$ with positive derivative, then $\sigma$ is sharply $C^{1,1}_w$ at $v \in (a_1,b_1)$ if and only if $\sigma^{-1}$ is sharply $C^{1,1}_w$ at $\sigma(v)$.
    \end{enumerate}
\end{lemma}
The proof of the lemma is a simple exercise in expansions, the chain rule, and the triangle inequality.  For completeness we include the argument for $(\ref{Lemma:C11wComposition})$.

\begin{proof}[Proof of $(i)$]
    Pick $\epsilon >0$ such that $t \in (a_2+\epsilon, b_2-\epsilon)$ and $v := \sigma^{-1}(t) \in (a_1+\epsilon, b_2-\epsilon)$, and write 
    \begin{align*}
        M_\gamma := \max_{[a_2+\epsilon,b_2-\epsilon]}|\gamma'| \qquad \text{ and } \qquad M_\sigma := \max_{[a_1+\epsilon,b_1-\epsilon]}|\sigma'|.
    \end{align*}  
    By assumption we have functions $g_1, g_2$ such that
    \begin{align*}
        |\gamma'(s) - \gamma'(t)| \leq C_1 g_1(|s-t|)|s-t| \qquad \text{ and } \qquad |\sigma'(u)-\sigma'(v)| \leq C_2 g_2(|u-v|)|u-v|
    \end{align*}
    for $s$ near $t$ and $u$ near $v$, respectively, where $g_j(x) = o(\log\frac{1}{x})$ as $x \rightarrow 0^+$.  We observe
    \begin{align*}
        |\gamma'(\sigma(u))\sigma'(u) - \gamma'(\sigma(v))\sigma'(v)| &\leq |\gamma'(\sigma(u))\sigma'(u) - \gamma'(\sigma(v))\sigma'(u)| + |\gamma'(\sigma(v))\sigma'(u) - \gamma'(\sigma(v))\sigma'(v)|\\
        &\leq M_\sigma |\gamma'(\sigma(u)) - \gamma'(\sigma(v))| + M_\gamma |\sigma'(u) - \sigma'(v)|\\
        &\leq M^2_\sigma C_1g_1\big(|\sigma(u)-\sigma(v)| \big)|u-v| + M_\gamma C_2g_2(|u-v|)|u-v|.
    \end{align*}
    Using the Taylor expansion of $\sigma$ at $v$, we see 
    \begin{align}\label{Lim:StillLittleOh}
        \frac{g_1\big( |\sigma(u)-\sigma(v)| \big)}{\log \frac{1}{|u-v|}} = \frac{g_1\big( |\sigma(u)-\sigma(v)| \big)}{\log \frac{1}{|\sigma(u)-\sigma(v)|}-\log \frac{1}{|\sigma'(v)|} + o(1)} \rightarrow 0
    \end{align}
    as $u \rightarrow v$, and so the estimate
    \begin{align*}
        |(\gamma \circ \sigma)'(u) - (\gamma\circ \sigma)'(v)| \leq C_3 \max\big\{g_1\big( |\sigma(u)-\sigma(v)| \big), g_2(|u-v|)  \big\}|u-v|
    \end{align*}
  shows the composition is not sharply $C^{1,1}_w$ at $v$, as claimed.
\end{proof}

\begin{proof}[Proof of Corollary \ref{Cor:WongSharp}]
    By Theorem \ref{Thm:Wang}($\ref{Thm:WangDriver}$), $\gamma := \gamma_\theta \cup \gamma_\theta^*$ has downwards driver
    \begin{align*}
        \lambda_\theta(t) = \begin{cases}
            \xi_\theta(\tau_\theta-t) - \xi_\theta(\tau_\theta) & 0 \leq t \leq \tau_\theta \\
            - \xi_\theta(\tau_\theta) & \tau_\theta < t,
        \end{cases}
    \end{align*}
    and expanding the derivative yields
    \begin{align*}
        \dot{\lambda}_\theta(t) =  -16\frac{\cos(\theta)}{\sin^3(\theta)} (\tau_\theta-t)^{1/2} + O(\tau_\theta - t)^{5/2}, \qquad t \nearrow \tau_\theta.
    \end{align*}
    Thus $\lambda_\theta$ is $C^{3/2}$ at $t = \tau_\theta$ and smooth elsewhere, and so by Proposition \ref{Prop:Wong} (and our comment in the subsequent paragraph), the capacity parametrization $t \mapsto \gamma(t)$ of our curve is at least $C^{1,1}_w$ at $t= \tau_\theta$.  We show that this membership is sharp in three steps:  $(i)$ first, we show the conformal parametrization of $\gamma$ is sharply $C^{1,1}_w$ at the point corresponding to $e^{i\theta}$, and then $(ii)$ that the arc-length parametrization likewise is, and finally $(iii)$ that this also holds for the half-plane capacity parametrization.
    
    \bigskip
    \noindent $(i)$ As noted above in Remark \ref{Remark:RemainingWang}$(\ref{Remark:WangUniversalCurve})$, the \Mob transformation $M_\theta(z) = ie^{-i\theta}\frac{z-e^{i\theta}}{z-e^{-i\theta}}$ takes $\gamma = \gamma_\theta \cup \gamma_\theta^*$ to the $C^1$ geodesic pair $\tilde{\gamma} = \gamma_{1,\tilde{\theta}} \cup \gamma_{2,\tilde{\theta}}$ in $(\mathbb{D}; e^{i\tilde{\theta}}, -e^{-i\tilde{\theta}},0)$, where $\tilde{\theta} = \frac{\pi}{2}-\theta$ (see also Figure \ref{Fig:UniversalCurveWang}).  The conformal map
    \begin{align*}
        G_{\tilde{\theta}}(z) := \frac{1}{2}\Big( z + \frac{1}{z} \Big) -i \sin(\tilde{\theta}) \log(z),
    \end{align*}
    with $\arg(z) \in (0,\pi)$, then maps the component $\tilde{\Omega}_1$ of $\mathbb{D} \bs \tilde{\gamma}$ with larger imaginary values to the lower half plane, sending $0$ to $\infty$ (see \cite[Figure 1]{MRW}, and Schwarz-reflect over the green line $i \mathbb{R}_{\geq 0} \cap \mathbb{D}$).  Hence the composition $J_\theta:=(1/G_{\tilde{\theta}}) \circ M_\theta$ is a conformal map of the left component $\Omega_1$ of $\mathbb{H} \bs \gamma$ to $\mathbb{H}$, sending $e^{i\theta}$ to 0.  
    
    We claim that $J_\theta'$ is continuous and non-zero in a neighborhood $B_\epsilon(e^{i\theta}) \cap \overline{\Omega}_1$ of $e^{i\theta}$ in the closure of $\Omega_1$ (note that in this section, the ``bar'' of a set always denotes closure).  We have
    \begin{align*}
        G_{\tilde{\theta}}'(z) = \frac{z^2-2i\sin(\tilde{\theta})z-1}{2z^2},
    \end{align*}
    which vanishes at $e^{i\theta}$ and $-e^{-i\theta}$.  Since $M_\theta$ is \Mob, $J_\theta'$ is as claimed on a punctured neighborhood of $e^{i\theta}$ in $\overline{\Omega}_1$.  For the potentially problematic point $e^{i\theta}$, corresponding to $z=0$ for $G_{\tilde{\theta}}$, we compute
    \begin{align}\label{Eq:GExpansion0}
        \Big( \frac{1}{G_{\tilde{\theta}}} \Big)'(z) = 2 + 8i\sin(\theta)z \log(z) + O(z), \qquad \overline{\tilde{\Omega}}_1 \ni z \rightarrow 0,
    \end{align}
    and we conclude the claim about $J_\theta$ holds.  
    
    The conformal parametrization of $\gamma$ near $e^{i\theta}$ is given by the inverse mapping $K_\theta(x) := J^{-1}_\theta(x)$ for $x \in \mathbb{R}$ near $0$.  By the above, we see $K_\theta$ has continuous, non-zero derivative at $x \in B_{\epsilon'}(0) \cap \mathbb{R}$, where 
    \begin{align*}
        K'_\theta(x) = \lim_{\overline{\mathbb{H}} \ni w \rightarrow x} \frac{K_\theta(w)-K_\theta(x)}{w-x}.    
    \end{align*}
    By \eqref{Eq:GExpansion0} and the fact that $M_\theta$ is M\"{o}bius, we have that $J_\theta$ is sharply $C^{1,1}_w$ at $e^{i\theta}$ in $\overline{\Omega}_1$ (with the obvious adjustment of definitions for functions of a complex variable).  Similar to Lemma \ref{Lemma:C11w}($\ref{Lemma:C11wInverse}$), we conclude $K_\theta$ likewise is on $\mathbb{R}$ near $0$.  In fact, a computation shows
 \begin{align}
        K_\theta'(z) &= e^{i\theta}\sin(\theta) - ie^{i\theta}\sin(2\theta)z \log(z) + o(z\log(z)), \qquad \overline{\mathbb{H}} \ni z \rightarrow 0. \label{Eq:K_theta'}
    \end{align}
    We conclude the conformal parametrization of $\gamma$ is sharply $C^{1,1}_w$ at the point $x=0$ corresponding to $e^{i\theta}$.

    \bigskip
    \noindent $(ii)$  We claim this also carries over to the arc-length parametrization $\eta$, which we normalize to satisfy $\eta(0) = e^{i\theta}$.  Indeed, setting
    \begin{align}\label{Eq:ArcLengthSigma}
        \sigma(t) := \int_{0}^t |K'_\theta(x)|dx,
    \end{align}
    we have $\eta(u)= K_\theta(\sigma^{-1}(u))$ nearby $u=0$ (we allow $t$ to be negative in \eqref{Eq:ArcLengthSigma}), with
    \begin{align*}
        \eta'(u) = \frac{K'_\theta(\sigma^{-1}(u))}{|K'_\theta(\sigma^{-1}(u))|}.
    \end{align*}
    Since as $u \rightarrow 0$,
    \begin{align*}
        \sigma^{-1}(u) = \frac{u}{\sigma'(0)}(1+o(1)) = \frac{u}{\sin(\theta)}(1+o(1)),
    \end{align*}
    from \eqref{Eq:K_theta'} we find
    \begin{align*}
        K'_\theta(\sigma^{-1}(u)) = e^{i\theta}\sin(\theta) - ie^{i\theta}\frac{\sin(2\theta)}{\sin(\theta)}u \log|u| + o(u\log|u|),
    \end{align*}
    and therefore
    \begin{align}
        \eta'(u)&=\frac{K'_\theta(\sigma^{-1}(u))}{\sqrt{K'_\theta(\sigma^{-1}(u)) \overline{K'_\theta(\sigma^{-1}(u))}}}\label{Eq:ArcLengthC11wsStep1}\\ & = \frac{e^{i\theta}\sin(\theta) - ie^{i\theta}\frac{\sin(2\theta)}{\sin(\theta)}u \log|u| + o(u\log|u|)}{\sin(\theta)\sqrt{1+o(u\log|u|)}}\label{Eq:ArcLengthC11wsStep2}\\  
        &= e^{i\theta} - ie^{i\theta}\frac{\sin(2\theta)}{\sin^2(\theta)}u \log|u| + o(u\log|u|).\label{Eq:ArcLengthC11ws}
    \end{align}
    Hence $\eta$ is sharply $C^{1,1}_w$ at the point corresponding to $e^{i\theta}$, as claimed.  
    \bigskip
    
    \noindent $(iii)$ We built $\eta$ from the conformal parametrization $K_\theta$, but we can also do so from the capacity parametrization of  $\gamma$, which, abusing notation, we also write as $\gamma$.  Indeed, as in \eqref{Eq:ArcLengthSigma} we set
    \begin{align*}
        \tilde{\sigma}(t) := \int_{\tau_\theta}^t |\gamma'(s)|ds.
    \end{align*}
    By \cite[Thm. 5.2]{Wong}, $|\gamma'(s)|$ has a positive lower bound in a neighborhood of $\tau_\theta$, and we can thus write the arc-length parametrization as $\eta(u) = \gamma \circ \tilde{\sigma}^{-1}(u)$ nearby $u=0$.  Noting that
    \begin{align}\label{Ineq:sigmatilde}
        |\tilde{\sigma}'(s)-\tilde{\sigma}'(\tau_\theta)| = \big||\gamma'(s)|-|\gamma'(\tau_\theta)| \big| \leq |\gamma'(s) - \gamma'(\tau_\theta)|,
    \end{align}
    we see that $\gamma \in C^{1,1}_w$ implies that $\tilde{\sigma}$ is also $C^{1,1}_w$ at $\tau_\theta$, and hence by Lemma \ref{Lemma:C11w}($\ref{Lemma:C11wInverse}$), $\tilde{\sigma}^{-1}$ is $C^{1,1}_w$ at $0$. Since $\gamma \circ \tilde{\sigma}^{-1}$ is sharply $C^{1,1}_w$ at 0 by \eqref{Eq:ArcLengthC11ws}, Lemma \ref{Lemma:C11w}($\ref{Lemma:C11wComposition}$) says that either $\gamma$ or $\tilde{\sigma}^{-1}$ is sharply $C^{1,1}_w$ at $\tau_\theta$ or $0$, respectively.  If $\gamma$ is not, then \eqref{Ineq:sigmatilde} implies that $\tilde{\sigma}$ also is not, and so $\tilde{\sigma}^{-1}$ also is not by Lemma \ref{Lemma:C11w}($\ref{Lemma:C11wInverse}$).  Thus neither $\gamma$ or $\tilde{\sigma}^{-1}$ is sharply $C^{1,1}_w$, a contradiction, forcing us to conclude $\gamma$ is sharply $C^{1,1}_w$ at $\tau_\theta$. 
\end{proof}
Our above approach leads to a lemma on the boundary behavior of conformal maps, which originated from comparing the parametrizations of the geodesic pair above to the case of \cite[Example 7.1]{LindTran}.  We state the lemma and then compare these two curves in light of it in Example \ref{Eg:WongSharp} below. 

Let $F$ be a conformal map from $\mathbb{H}$ to one component of $\mathbb{H} \bs \gamma$, taking $x_0 \in \mathbb{R}$ to some point on $\gamma$.  It is not always the case that if $F$ is sharply $C^{1,1}_w$ at $x_0$, then the arc-length parametrization $\eta$ of $\gamma$ is also sharply $C^{1,1}_w$ at $F(x_0)$.  We can read this off, however, from the expansion of $F'$ near $x_0$.  We state a result which covers $C^{1,1}_w$ regularity as a special case.
\begin{lemma}\label{Lemma:ArcLengthCoeff}
    Let $F:\mathbb{H} \rightarrow \Omega$ be conformal with $F'(z)$ defined in some neighborhood $B_\epsilon(x_0) \cap \overline{\mathbb{H}}$, where, for some $a_1, a_2 \neq 0$ and $0 < \alpha \leq 1$, $0 \leq \beta$,
    \begin{align}\label{Eq:ConformalMapExpansion}
        F'(z) = a_1 + a_2(z-x_0)^\alpha \log^\beta(z-x_0) +o\big( (z-x_0)^\alpha\log^\beta(z-x_0) \big), \qquad \overline{\mathbb{H}} \ni z \rightarrow x_0.
    \end{align}
    Then $\partial \Omega$ has an arc-length parametrization $\eta$ that satisfies
    \begin{align}\label{Eq:ArcLengthEtaRegularity1}
        \eta'(u) = \eta'(u_0) +O\big( (u-u_0)^\alpha \log^\beta|u-u_0| \big) 
    \end{align}
    in a neighborhood of $\eta(u_0) := F(x_0)$.  Furthermore, 
    \begin{align}\label{Eq:ArcLengthEtaRegularity2}
        \eta'(u) = \eta'(u_0) +o\big( (u-u_0)^\alpha \log^\beta|u-u_0| \big) 
    \end{align}
    as $u \rightarrow u_0$ if and only if $\overline{a_1} a_2 \in \mathbb{R}$. 
\end{lemma}
\noindent In other words, $\eta$ is more regular than the conformal parametrization if and only if $a_1$ and $a_2$ are linearly dependent as vectors in $\mathbb{R}^2$; greater symmetry in the $a_j$ leads to greater regularity in $\eta$. Note that in the statement, $\overline{z}$ is the complex conjugate of a single complex number $z$, while $\overline{\mathbb{H}}$ refers to the closure of the upper half plane.  Also, $\log^\beta(z) := \big(\log(z)\big)^\beta$.


\noindent 

\begin{proof}
    Since pre-composing with the shift $z+x_0$ does not change the coefficients in \eqref{Eq:ConformalMapExpansion}, $x_0=0$ without loss of generality.  
    
    We first note that the condition on the $a_j$ is independent of the choice of conformal map.  Indeed, if $F$ satisfies \eqref{Eq:ConformalMapExpansion} and $\tilde{F}: \mathbb{H} \rightarrow \Omega$ is also conformal with $\tilde{F}(0) = F(0)$, then $\tilde{F} = F \circ M$ for some \Mob transformation $M(z) = \frac{bz}{z+c}$ with $b,c \in \mathbb{R}$, $bc \neq 0$, and we find
    \begin{align*}
        \tilde{F}'(z) = a_1 \frac{b}{c} + a_2 \frac{b^{\alpha+1}}{c^{\alpha+1}}z^\alpha \log^\beta(z) + o\big(z^\alpha\log^\beta(z)\big), \qquad \overline{\mathbb{H}} \ni z \rightarrow 0.
    \end{align*}
    Thus $\tilde{F}$ satisfies the criterion if and only if $F$ likewise does, and so the condition may be stated in terms of any conformal map.  Note also that \eqref{Eq:ConformalMapExpansion} implies 
    \begin{align*}
        F'(z) = a_1 + a_2z^\alpha \log^\beta|z| +o\big( z^\alpha\log^\beta|z| \big), \qquad z \rightarrow 0.
    \end{align*}
    
    We proceed to construct the arc-length parametrization $\eta:(-\epsilon', \epsilon') \rightarrow B_{\epsilon}(F(z))\cap \partial \Omega$ as in part $(ii)$ of the proof of Corollary \ref{Cor:WongSharp}.  That is, we set
    \begin{align*}
        \sigma(t) := \int_{0}^t |F'(x)|dx
    \end{align*}
    and thus have $\eta(u) = F(\sigma^{-1}(u))$.  Then $\sigma^{-1}(u) = \frac{1}{|a_1|}u + o(u)$ as $u \rightarrow 0$, and we subsequently find
    \begin{align*}
        F'(\sigma^{-1}(u)) = a_1 + \frac{a_2}{|a_1|^\alpha }u^\alpha \log^\beta|u| + o\big( u^\alpha \log^\beta|u| \big), \qquad u \rightarrow 0.
    \end{align*}
    It follows that
    \begin{align*}
        |F'(\sigma^{-1}(u))| &= \sqrt{F'(\sigma^{-1}(u))\overline{F'(\sigma^{-1}(u))}}\\
        &= \sqrt{|a_1|^2 + \frac{a_1\overline{a_2} + \overline{a_1}a_2}{|a_1|^\alpha}u^\alpha \log^\beta|u| + o\big( u^\alpha \log^\beta|u| \big) }\\
        &= |\alpha_1| + \frac{a_1\overline{a_2} + \overline{a_1}a_2}{2|a_1|^{\alpha+1}}u^\alpha \log^\beta|u| + o\big( u^\alpha \log^\beta|u| \big), \qquad u \rightarrow 0.
    \end{align*}
    Combining these previous two computations yields
    \begin{align*}
        \eta'(u) &= \frac{F'(\sigma^{-1}(u))}{\sqrt{F'(\sigma^{-1}(u))\overline{F'(\sigma^{-1}(u))}}} = \frac{a_1}{|a_1|} + \frac{i \Imag(\overline{a_1}a_2)}{\overline{a_1}|a_1|^{\alpha+1}} u^\alpha \log^\beta|u| +o\big(u^\alpha \log^\beta|u|\big)
    \end{align*}
    as $u \rightarrow 0$.  The conclusions on the regularity of $\eta'$ immediately follow.
\end{proof}

{
\renewcommand{\arraystretch}{2.1}
\renewcommand{\tabcolsep}{5pt}
\begin{table}
    \centering
	\begin{tabular}{|c|c|c|c|c|}
		\hline
		\emph{} & \emph{Driver} & \makecell{\emph{Capacity}\\ \emph{parametrization}} & \makecell{\emph{Arc-length}\\\emph{parametrization}} & \makecell{\emph{Conformal}\\ \emph{map}}  \\ \hline
		\makecell{$\Gamma_1 =$ EMP geodesic pair\\ $\gamma_\theta \cup \gamma_\theta^*$} & $C^{3/2}$ & Sharply $C^{1,1}_w$ & Sharply $C^{1,1}_w$ & Sharply $C^{1,1}_w$\\ \hline
		\makecell{$\Gamma_2=$ EMW curve followed\\by vertical line (also \cite[Ex. 7.1]{LindTran})} & $C^{3/2}$ & $C^{1,1}$ & $C^{1,1}$ & Sharply $C^{1,1}_w$\\ \hline
	\end{tabular}
	\bigskip
	\caption{{\small Regularities associated to the two curves in Figure \ref{Fig:WongSharp} and Example \ref{Eg:WongSharp}.}}
	\label{Table:WongSharp}
\end{table}
}
\begin{example}\label{Eg:WongSharp}
    Let us compare the Wang-minimizer geodesic pair $\Gamma_1 = \gamma_\theta \cup \gamma_\theta^*$ with the curve $\Gamma_2$ from \cite[Example 7.1]{LindTran}, as in Figure \ref{Fig:WongSharp}.  For the former, we saw in \eqref{Eq:K_theta'} that the coefficients $a_1,a_2$ for the derivative $K_\theta'$ of the conformal map are not co-linear, and so by Lemma \ref{Lemma:ArcLengthCoeff} we know that $\eta$ has the same regularity as the conformal parametrization, namely sharply $C^{1,1}_w$.  This is, of course, what we discovered in the course of the proof of Corollary \ref{Cor:WongSharp}.
    
    For $\Gamma_2$, the non-smooth point $\zeta_2$ is $F(0)$ for the conformal map $F$ given in \cite{LindTran}, for which we explicitly compute
\begin{align*}
    F'(z) = \frac{ic\pi}{2} - ic^2 \pi z \log(z) + O(z), \qquad \overline{\mathbb{H}} \ni z \rightarrow 0,
\end{align*}
where $0 \neq c \in \mathbb{R}$.  By Lemma \ref{Lemma:ArcLengthCoeff}, the arc-length parametrization $u \mapsto \eta(u)$ is not sharply $C^{1,1}_w$ at the point corresponding to $\zeta_2$.  

In fact, we claim that $\eta$ is $C^{1,1}$.  This follows from Lind-Tran's result that the half-plane capacity parametrization $\gamma$ is $C^{1,1}$.  Indeed, write $\eta(u) = \gamma \circ \tilde{\sigma}^{-1}(u)$ as in part $(iii)$ of the proof of Corollary \ref{Cor:WongSharp}, and note by \eqref{Ineq:sigmatilde} that $\tilde{\sigma} \in C^{1,1}$.  Similarly to Lemma \ref{Lemma:C11w}, it follows that $\tilde{\sigma}^{-1} \in C^{1,1}$, and thus that the composition $\eta$ is $C^{1,1}$ as well.

Table \ref{Table:WongSharp} summarizes these two examples.

As it turns out, the $\Gamma_2$ curve, after mapping down the initial straight-line portion, will be the focus of our attention in the next section, where we will show that it solves the Loewner energy minimization problem for welding.  While we will not mention \cite[Example 7.1]{LindTran} in the sequel, we note Theorem \ref{Thm:EMW}($\ref{Thm:EMWSingleCurve}$) shows the EMW curve is the same, and also provides the explicit driving function.

\end{example}

\subsubsection{Questions}\label{Sec:SharpnessQuestions}
We close this section by highlighting two open questions on the fine properties of the regularity correspondence between $\gamma$ and $\lambda$. 
\begin{enumerate}[$(i)$]
    
    \item\label{Q:Coefficients} The driver $\lambda$ contains all the information of the curve $\gamma$.  For $\lambda \in C^{3/2}$, is it possible to see, e.g., from the coefficients of the expansion of $\dot{\lambda}$, when $\gamma$ will be sharply $C^{1,1}_w$?  Write $\lambda_j$ for the downwards driver of $\Gamma_j$, with $\Gamma_j$ as in Figure \ref{Fig:WongSharp} and Example \ref{Eg:WongSharp}.  From \eqref{Eq:WangDriver} we find
    \begin{align*}
        \dot{\lambda}_1(t) = 16 \frac{\cos(\theta)}{\sin^3(\theta)}(\tau_\theta - t)^{1/2} - \frac{896}{3} \frac{\cos^3(\theta)}{\sin^9(\theta)}(\tau_\theta - t)^{5/2} + O (\tau_\theta - t)^{9/2}, \qquad t \nearrow \tau_\theta,
    \end{align*}
    while in the normalization of $\Gamma_2$ given by Theorem \ref{Thm:EMW}$(\ref{Thm:EMWSingleCurve})$ below, with $\tau$ the time corresponding to $\zeta_2$,
    \begin{align*}
        \dot{\lambda}_2(t) = -\frac{16}{\pi} (t - \tau)^{1/2} - \frac{896}{3\pi^3} (t-\tau)^{5/2} + O (t-\tau)^{9/2}, \qquad t \searrow \tau.
    \end{align*}
    This suggests the alternating signs in $\dot{\lambda}_1$'s coefficients lead to the faster growth of $\gamma_1'$ near $\tau_\theta$.  Can this be made rigorous and classify when $\gamma \in C^{1,1}_w$?
    
    \item\label{Q:LindTran} We also note that Lind and Tran \cite[Thm. 1.1]{LindTran} extended Proposition \ref{Prop:Wong} to all $\beta>2$, although their conclusion for $\gamma$ when $\beta + 1/2 = n \in \{3,4,\ldots\}$ is that $\gamma^{(n)}$ is in the Zgymund class $\Lambda_*$, which is to say, for each $0 < t_0$ there exists $C<\infty$ such that
\begin{align*}
    |\gamma^{(n)}(t+\delta) + \gamma^{(n)}(t-\delta) - 2\gamma^{(n)}(t)| \leq C\delta
\end{align*}
for $0 < t_0 \leq t \leq T$ and $0 < \delta$.\footnote{Recall that, for functions from a compact interval to $\mathbb{C}$, $C^1 \subset C^{0,1} \subset \Lambda_* \subset \cap_{0 < \alpha < 1} C^{\alpha} \subset C$ \cite[p.57]{garnett_marshall}.}  Their proof \cite[Thm. 4.1]{LindTran}, however, shows that the modulus of continuity of $\gamma^{(n)}$ is controlled by $C\delta \log \frac{1}{\delta}$ (see \cite[Lemma 4.3]{LindTran}). Thus Lind-Tran's result is similar to Wong's in the $\lambda \in C^{n-1/2}$ case.  Our example only shows sharpness for $n=2$ (Wong's theorem).  Is the Lind-Tran extension also sharp for larger values of $n$?  Answering question $(\ref{Q:Coefficients})$ would likely produce an answer here, too.

\end{enumerate}

\section{The energy minimizer for weldings (EMW) family}\label{Sec:EMW}
We now turn our attention to the second question in the introduction: what is the infimal energy needed to weld a given $x<0<y$, and what is the nature of minimizing curves, if they exist?  Our approach is entirely parallel to that for the EMP curves in \S\ref{Sec:Wang}; we first prove existence of minimizers (Lemma \ref{Lemma:WeldingEnergyMinimizerExists}), then an ``even approach'' property for finite-energy curves (Lemma \ref{Lemma:EvenApproach}), and then collect our main results (Theorem \ref{Thm:EMW}). 

\begin{lemma}\label{Lemma:WeldingEnergyMinimizerExists}
    For any $x_0<0<y_0$, there exists a driver $\xi$ with $\xi(0)=0$ which welds $x_0$ to $y_0$ under its upwards Loewner flow and satisfies 
    \begin{align*}
        I(\xi) = \inf_{\eta \in \mathcal{W}} I(\eta),
    \end{align*}
    where $\mathcal{W}$ is the family of all drivers $\eta$ that weld $x_0$ to $y_0$, have $\eta(0)=0$, and generate a simple curve $\gamma^\eta$.
\end{lemma}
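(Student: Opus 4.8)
The plan is to mirror the proof of Lemma \ref{Lemma:AngleEnergyMinimizerExists}: take a minimizing sequence of finite-energy curves, extract a subsequential limit using the compactness of quasiconformal maps, and conclude with the lower semi-continuity of energy \eqref{Ineq:LoewnerEnergyLSC}. First I would check that the infimum is finite, so that the minimizing sequence consists of $K$-quasiarcs for a single $K$. For this, note that the Wang weldings \eqref{Eq:WangWeldInterval} realize every ratio $-x_\theta/y_\theta \in (0,\infty)$ as $\theta$ ranges over $(0,\pi)$ (by the monotonicity already recorded), so a suitably scaled Wang minimizer welds $x_0$ to $y_0$ with finite energy \eqref{Eq:WangEnergy}; thus $\inf_{\eta \in \mathcal{W}} I(\eta) < \infty$, and it suffices to consider $\eta \in \mathcal{W}$ with energy below this bound. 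For each such $\eta$, the generated curve $\gamma^\eta$ is a $K$-quasiarc with $K = K(x_0,y_0)$ by \cite[Prop. 2.1]{WangReverse}, hence $\gamma^\eta = q^\eta([0,i])$ for a $K$-quasiconformal self-map $q^\eta$ of $\mathbb{H}$ fixing $0$ and $\infty$.

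The one genuinely new point, and the main obstacle, is ruling out degeneration of a minimizing sequence $\{\gamma_n\}$. In Lemma \ref{Lemma:AngleEnergyMinimizerExists} the constraint $q_n(i) = e^{i\theta}$ supplied a third normalization point that forced the limit of the $q_n$ to be non-constant; here the welding constraint fixes no point of the curve, so this must be replaced. I would argue instead that the diameters $\diam(\gamma_n)$ stay in a compact subinterval of $(0,\infty)$. Suppose not, say $\diam(\gamma_n) \to 0$ along a subsequence, and rescale $\tilde{\gamma}_n := \gamma_n/\diam(\gamma_n)$. Each $\tilde{\gamma}_n$ is a $K$-quasiarc of diameter $1$ rooted at $0$ whose welding, since welding endpoints scale linearly with the curve (cf. \eqref{Eq:ScaledDriver}), sends $[x_0/r_n,0]$ to $[0,y_0/r_n]$ with $r_n := \diam(\gamma_n) \to 0$. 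By the normality of $K$-quasiconformal maps \cite[Thm. 2.1]{LehtoUFT} the $\tilde{\gamma}_n$ subconverge uniformly to a diameter-$1$ quasiarc $\tilde{\gamma}$, and since uniform curve convergence yields driver convergence \cite[Thm. 1.8]{YuanTop}, which in turn yields welding convergence \cite{TVY}, the welding endpoints $x_0/r_n$ must converge to a finite value, contradicting $x_0/r_n \to -\infty$. The case $\diam(\gamma_n) \to \infty$ is symmetric: there $x_0/r_n, y_0/r_n \to 0$, forcing the limiting welding interval to degenerate to a point, which is impossible for a genuine diameter-$1$ quasiarc. Hence $\diam(\gamma_n)$, and so the capacities $\tau_n = \tfrac12\hcap(\gamma_n)$, are bounded above and away from zero.

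With non-degeneration in hand the remainder follows the template of Lemma \ref{Lemma:AngleEnergyMinimizerExists}. Passing to a subsequence, the maps $q_n$ converge locally uniformly to a $K$-quasiconformal (non-constant) self-map $q$ of $\mathbb{H}$, giving a simple limit curve $\gamma := q([0,i])$. Since the $\gamma_n$ have uniformly bounded capacity and the modulus of continuity of a $K$-quasiarc in its capacity parametrization depends only on $K$ \cite[Proof of Thm. 4.1]{LMR}, extending each curve by a hyperbolic-geodesic segment to a common capacity time and applying Arzela--Ascoli upgrades this to uniform convergence of the capacity parametrizations, the limit being $\gamma$ by \Cara convergence of the complements together with continuity of $\hcap$. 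Uniform curve convergence then gives convergence of the drivers $\xi_n \to \xi$ \cite[Thm. 1.8]{YuanTop}, and hence of the weldings \cite{TVY}; as every $\gamma_n$ welds $x_0$ to $y_0$ and $\tau_n \to \tau > 0$, the limit welds $x_0$ to $y_0$ as well, so $\xi \in \mathcal{W}$. Finally, lower semi-continuity \eqref{Ineq:LoewnerEnergyLSC} gives $I(\xi) \leq \liminf_n I(\xi_n) = \inf_{\eta \in \mathcal{W}} I(\eta)$, and since $\xi \in \mathcal{W}$ this forces equality, completing the proof.
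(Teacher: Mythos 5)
Your proposal is correct in outline, but it takes a genuinely different and noticeably heavier route than the paper. The paper never passes to curves or quasiconformal maps at all: it bounds the hitting times $\tau_\eta$ uniformly (via the standard fact $\diam(\gamma^\eta) \asymp y_0 - x_0$ from \cite[top of p.~74]{Lawler} and monotonicity of $\hcap$), extends the minimizing sequence of \emph{drivers} by constants to a common interval $[0,T]$, invokes precompactness of bounded-energy drivers in $C([0,T])$, and then applies \cite{TVY} directly to the uniformly convergent drivers to see the limit still welds $x_0$ to $y_0$; lower semi-continuity finishes. The point you identify as ``the main obstacle'' --- possible degeneration of the curves --- is thus dispatched in one line by the two-sided comparability $\diam(\gamma^\eta) \asymp y_0 - x_0$, which already gives upper \emph{and} lower diameter bounds; your rescaling-plus-compactness argument re-derives this, and its key step (that welding convergence forces the welding \emph{endpoints} to converge to finite values) is not fully justified as stated, though the conclusion is true. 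The structural reason the paper's proof is shorter is worth internalizing: in Lemma \ref{Lemma:AngleEnergyMinimizerExists} the constraint (passing through $e^{i\theta}$) lives at the level of the curve, so one must upgrade driver compactness to curve convergence via quasiconformal normality; here the constraint (welding $x_0$ to $y_0$) is, thanks to \cite{TVY}, continuous directly in the driver, so driver-space compactness suffices. Your choice of a scaled Wang minimizer rather than the circular arc to witness finiteness of the infimum is fine (and avoids the paper's forward reference to Lemma \ref{Lemma:CircularArc}), and your final limiting steps via \cite{YuanTop} and \cite{TVY} are sound.
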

Note that the infimum would be the same if we allowed drivers in $\mathcal{W}$ that did not generate simple curves, as these drivers have infinite energy (finite-energy curves are quasi-arcs and thus simple \cite{WangReverse}).

    Conceptually, the proof is nearly identical to the argument for Lemma \ref{Lemma:AngleEnergyMinimizerExists}; we again use compactness and lower semicontinuity of energy.  A slight twist is why a limiting driver $\xi$ still welds $x_0$ and $y_0$, and for this we use a result which states uniform driver convergence implies a form of uniform welding convergence \cite[Thm. 4.3]{TVY}.
\begin{proof}
  As we note below in Lemma \ref{Lemma:CircularArc}, the orthogonal circular arc segment which welds $x_0$ to $y_0$ to its base has finite energy, and so the infimum is finite.  For $\eta \in \mathcal{W}$, let $\tau_\eta$ be the ``hitting time'' of $x_0,y_0$ under $\eta$, where $x(t), y(t)$ are the images of $x_0,y_0$ under the upwards flow generated by $\eta$.  That is, $\tau_\eta$ is the first time when $x(\tau_\eta)=\eta(\tau_\eta)=y(\tau_\eta)$.  Note that the $\tau_\eta$ are uniformly bounded over $\mathcal{W}$: as any curve $\gamma^\eta$ generated by $\eta \in \mathcal{W}$ has $\diam(\gamma^\eta) \asymp y_0-x_0$ \cite[top of p.74]{Lawler}, there exists $R>0$ such that $\gamma^\eta \subset B_R(0) \cap \mathbb{H}$ for all $\eta$, implying
    \begin{align*}
        \tau_\eta = \hcap(\gamma^\eta) \leq \hcap(B_R(0)\cap \nH) = R^2 \hcap(B_1(0) \cap \nH) = R^2
    \end{align*}
    by the monotonicity of hcap and explicit calculation. Thus, if $\{\eta_n\}$ is a sequence such that
    \begin{align}\label{Lim:MinimalEnergyEMW}
        I(\eta_n) \rightarrow \inf_{\eta \in \mathcal{W}} I(\eta) =:L,
    \end{align}
    then by flowing upwards, if necessary, with the constant driver $\eta_n(\tau)$ from the moment $\tau = \tau(\eta_n)$ that $x_0$ and $y_0$ are welded together (which adds no energy), we may assume that each $\eta_n$ is defined on the same interval $[0,T]$.  Hence $\{\eta_n\}$ is a bounded subset of $W^{1,2}([0,T])$ and so is precompact in $C([0,T])$ (recall the discussion in \S\ref{Sec:BackgroundLoewnerEnergy}).  If $\eta_{n} \rightarrow \xi$ is any sub-sequential limit, by lower semicontinuity
    \begin{align*}
       I(\xi)\leq \liminf_{n \rightarrow \infty} I(\eta_{n}) =:L,
    \end{align*}
    and thus $I(\xi) = L$ if $\xi \in \mathcal{W}$.  As noted above, $\xi$ generates a simple curve $\gamma$ since $I(\xi) < \infty$, and so we just have to ensure that $\xi$ welds $x_0$ to $y_0$.  By flowing up with the constant driver $\xi(\tau)$, if necessary, we may assume that the welding $\varphi$ for $\gamma$ is defined for $x_0$.  Similarly extending the $\xi_n$, we still have $\xi_n \rightarrow \xi$ uniformly on a fixed time interval $[0,T']$.  By further flowing up, if necessary, we have that $x_0$ is in the interior of the intervals welded by $\xi_n$ and $\xi$.  The uniform driver convergence on the resulting interval $[0,T'']$ then yields $y_0 = \varphi_{n}(x_0) \rightarrow \varphi(x_0)$ by \cite[Thm. 4.3]{TVY}.
\end{proof}

\begin{lemma}[``Even welding approach'']\label{Lemma:EvenApproach}
    Let $\xi$ be a driver with $\xi(0)=0$ and finite energy which welds $x_0 < 0 <y_0$ together at time $\tau$ under its upwards Loewner flow.  Then if $x(t)$ and $y(t)$ are the positions of $x_0$ and $y_0$ under the centered upwards flow, 
    \begin{align*}
        \lim_{t \rightarrow \tau^-}\frac{y(t)}{y(t)-x(t)} = \frac{1}{2}.
    \end{align*}
    Equivalently, $r(t):=-x(t)/y(t) \rightarrow 1$.
\end{lemma}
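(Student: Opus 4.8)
The plan is to mirror the proof of the downwards ``even angle approach'' (Lemma \ref{Lemma:WangEvenApproach}) in the upwards/welding setting. By scale invariance of energy together with Lemma \ref{Lemma:WeldingEnergyMinimizerExists}, the quantity
\[
n(r) := \inf\{\, I(\eta) \; : \; \eta \text{ welds some } x<0<y \text{ with } -x/y = r \,\}
\]
is a well-defined, finite, nonnegative function of the ratio $r \in (0,\infty)$ alone, and it is attained. The whole argument then reduces to two facts about $n$: that $n(r)>0$ for $r \neq 1$, and that $n$ is monotone near $r=1$, so that ratios bounded away from $1$ have energy bounded below.

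First I would record the local (Markov) step, exactly parallel to the Wang case. Running the upwards flow of $\xi$ from time $t$, the recentered tail driver $\tilde\xi(u):=\xi(t+u)-\xi(t)$, $u\in[0,\tau-t]$, welds the current positions $x(t)$ and $y(t)$ and has energy $\tfrac12\int_t^\tau\dot\xi^2$. Hence $\tfrac12\int_t^\tau \dot\xi(s)^2\,ds \geq n(r(t))$ for every $t$. Since $I(\xi)<\infty$ forces $\int_t^\tau \dot\xi^2 \to 0$ as $t\to\tau^-$, it suffices to show that $n(r(t))$ stays bounded below by a positive constant whenever $r(t)$ stays bounded away from $1$; this then forces $r(t)\to 1$.

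The positivity $n(r)>0$ for $r\neq 1$ is immediate: a minimizer with $n(r)=0$ has $\xi\equiv 0$, whose upwards flow is $h_t(z)=\sqrt{z^2-4t}$ and so welds only symmetric pairs (ratio $1$). The key remaining point, which I expect to be the main obstacle, is monotonicity of $n$. Here the Wang trick fails: flowing up with the zero driver sends $x_0<0$ to the origin at time $x_0^2/4$, so two asymmetric points reach $0$ at different times and the ratio degenerates to $0$ or $\infty$ rather than tending to $1$. Instead I would \emph{prepend a vertical segment at the base}. Given a minimizer $\gamma$ for ratio $r\leq 1$ with downwards driver $\lambda$ and centered map $G_\tau$, the curve $\hat\gamma$ whose downwards driver is $0$ on $[0,s]$ followed by $\lambda(\cdot-s)$ has the same energy $n(r)$ (the zero part is free) and welds the pair $G_\tau(-2\sqrt{s})$ to $G_\tau(2\sqrt{s})$. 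As $s$ ranges over $(0,\infty)$ these weldings vary continuously, with ratio tending to $r$ as $s\to 0^+$ and to $1$ as $s\to\infty$ (since $G_\tau(z)=z+O(1)$ near $\infty$). By the intermediate value theorem every ratio in $(r,1)$ is welded by some $\hat\gamma$ of energy $n(r)$, whence $n(\rho)\leq n(r)$ for $r<\rho<1$; that is, $n$ is non-increasing on $(0,1]$. Combined with the reflection symmetry $n(r)=n(1/r)$ (from the invariance $I(\gamma)=I(-\overline\gamma)$ recorded in \S\ref{Sec:BackgroundLoewnerEnergy}), this gives the needed uniform lower bound.

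Finally I would assemble the contradiction. If $r(t)\not\to 1$, choose $\eta>1$ and $t_n\to\tau^-$ with $\min(r(t_n),1/r(t_n))\leq 1/\eta$; by symmetry and monotonicity $n(r(t_n))\geq n(1/\eta)=:c>0$, so $\tfrac12\int_{t_n}^\tau \dot\xi^2 \geq c$ for all $n$, contradicting $\int_{t_n}^\tau \dot\xi^2\to 0$. The local step and the final contradiction are routine once the correct analogue $n$ of the Wang minimal-energy function $m$ is in place; essentially all of the work is in the vertical-segment construction that delivers the monotonicity of $n$.
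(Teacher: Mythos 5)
Your proof is correct, and its overall architecture is the same as the paper's: reduce the lemma to the statement that the minimal welding energy $n(r)$ (the paper's $m(r)$) is a function of the ratio alone, positive for $r\neq 1$, and monotone in $|r-1|$; then conclude from the domain Markov step $\tfrac12\int_t^\tau\dot\xi^2\geq n(r(t))$ and the fact that the tail energy tends to zero. The positivity argument and the final contradiction are identical to the paper's.

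The one substantive divergence is the monotonicity step, which you correctly single out as the crux. You rightly note that the literal Wang trick --- flowing the real points \emph{up} with the zero driver --- degenerates, but the paper's fix is more direct than yours: it first flows $x_0,y_0$ \emph{down} with the zero driver for time $t$, sending them to $-\sqrt{x_0^2+4t}$ and $\sqrt{y_0^2+4t}$, and then welds these with the upwards driver that is $0$ on $[0,t]$ followed by $\xi$. The new ratio $\sqrt{(x_0^2+4t)/(y_0^2+4t)}$ is explicit and visibly decreases monotonically to $1$, so the monotonicity of $n$ on each side of $r=1$ is immediate, with no intermediate value argument needed. Your construction --- prepending a vertical segment at the base by putting $0$ in front of the \emph{downwards} driver and tracking the welded pair $G_\tau(\pm 2\sqrt{s})$ --- is also valid and yields the same conclusion, but it costs slightly more: you need the continuous extension of $G_\tau$ to the prime ends at the base of $\gamma$ (available here since finite-energy curves are quasiarcs) and an IVT argument in place of a closed-form monotone ratio. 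Both constructions attach a zero-energy piece to the minimizer to dilute the asymmetry of the welded pair; the paper attaches it at the tip (in the upwards picture), where the resulting ratio is computable explicitly, while you attach it at the base, where it is not. Either way the lemma follows.
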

\begin{proof}
    We claim that there is a positive lower bound $m(r)$ for the energy to weld points with ratio $r=r(0) = -x_0/y_0 \neq 1$ that is non-decreasing in $|r-1|$. By symmetry we may suppose $r>1$.  
    
    Note that if we initially flow \emph{downwards} with the constantly-zero driver for time $t$, the images of $x_0,y_0$ have ratio $\sqrt{x_0^2+4t}/\sqrt{y_0^2+4t}$, which monotonically decreases to 1 as $t \rightarrow \infty$ (recall the explicit solution to the Loewner equation in this case, see \S\ref{Sec:BackgroundLoewner}).  So we may weld $-\sqrt{x_0^2+4t}$ to $\sqrt{y_0^2+4t}$ in the upwards flow by starting with the zero driver for time $t$ followed by $\xi$, which uses the same amount energy as $\xi$, thus showing
    \begin{align*}
        m\Big( \frac{\sqrt{x_0^2+4t}}{\sqrt{y_0^2+4t}} \Big) \leq m\Big(\frac{-x_0}{y_0}\Big). 
    \end{align*}
    We conclude $m(r)$ is non-decreasing for $r>1$. 
    
    Furthermore, $m(r)>0$ when $r \neq 1$, since minimizers exist by Lemma \ref{Lemma:WeldingEnergyMinimizerExists}, and a minimizer with zero energy can only weld symmetric points.  Hence if $\xi$ welds $x_0$ to $y_0$ with finite energy, as \begin{align*}
        m(r(t)) \leq \frac{1}{2}\int_t^\tau \dot{\xi}(s)^2ds \rightarrow 0
    \end{align*} 
    as $t \rightarrow \tau^-$, we have $r(t) \rightarrow 1$.
\end{proof}
We provide an exact formula for $m(r)$ in \eqref{Eq:EMWEnergyFormula} below.

As an aside, we note that Lemmas \ref{Lemma:WangEvenApproach} and \ref{Lemma:EvenApproach} state that points welded by finite-energy $\xi$ ``evenly'' approach $\xi$ and then move up into $\mathbb{H}$ perpendicularly from the real line.  While both properties hold for finite-energy curves, it is instructive to note that they are not equivalent, as the following example shows.\footnote{The fact that both properties hold for finite-energy curves is reminiscent of the fact that $I(\gamma) = I(-1/\gamma)$, Wang's reversibility of the Loewner energy \cite{WangReverse}.  Reversibility gives a global sense in which finite-energy curves ``look the same'' in both directions, and, informally speaking, lemmas \ref{Lemma:WangEvenApproach} and \ref{Lemma:EvenApproach} give a local sense in which this also holds.}
\begin{figure}
    \centering
    \subfloat[][]{
        \includegraphics[width=0.2\textwidth]{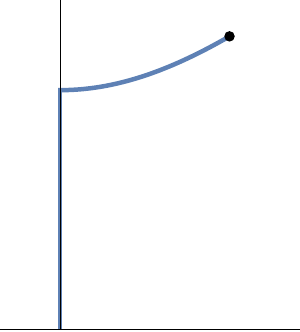}
        \label{Fig:SubfigA}
   }\qquad
        \subfloat[][]{
        \includegraphics[width=0.2\textwidth]{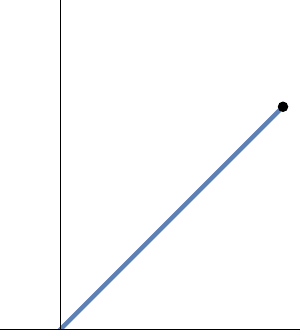}
        \label{Fig:SubfigB}
   }
    \caption{\small{A curve $\gamma$ in subfigure \ref{Fig:SubfigA} which shows the properties in lemmas \ref{Lemma:WangEvenApproach} and \ref{Lemma:EvenApproach} are not equivalent. Subfigure \ref{Fig:SubfigB}} shows the image of $\gamma$ after mapping down the vertical line segment.  While $\gamma$ satisfies the ``orthogonal approach'' property in Lemma \ref{Lemma:WangEvenApproach} for all times $\tau$, it does not satisfy the ``even welding approach'' property of Lemma \ref{Lemma:EvenApproach} at its corner.}
    \label{Fig:NotEquiv}
\end{figure}
\begin{example}\label{Eg:NotEquiv}
    For $c>0$, consider the curve $\gamma$ whose downward driver is
    \begin{align*}
        \lambda(t) = \begin{cases}
            0 & 0 \leq t \leq 1,\\
            c\sqrt{t} & 1 < t \leq 2.
        \end{cases}
    \end{align*}
That is, the base of $\gamma$ is a vertical line segment, and when this is mapped down, what remains is a line segment which meets $\mathbb{R}_{\geq 0}$ at angle $\alpha \pi \in (0,\pi/2)$ for some $\alpha = \alpha(c)$ \cite[Example 4.12]{Lawler}, as in Figure \ref{Fig:NotEquiv}.  We claim $\gamma$ satisfies the ``even angle approach'' property of Lemma \ref{Lemma:WangEvenApproach} but not the ``even welding approach'' property of Lemma \ref{Lemma:EvenApproach}. 
    
    With respect to the former, it is not hard to see that for each $0<\tau \leq 2$, 
    \begin{align*}
        \lim_{t \rightarrow \tau^-} \Arg(G_t(\gamma(\tau)) = \frac{\pi}{2}.
    \end{align*}
    This is obvious for $0 < \tau \leq 1$.  If $1 < \tau \leq 2$, set $s:=1+(\tau-1)/2$ and first map down with $G_{s}$.  The remaining curve $\tilde{\gamma}(t) := G_{s}(\gamma(t + s))$ on $0 \leq t \leq 2-s$ has driver $\tilde{\lambda}(t) = c\sqrt{t + s} - c \sqrt{s}$, which has finite energy, and so by Lemma \ref{Lemma:WangEvenApproach}, the image of $\gamma(\tau)$ must approach the imaginary axis as we continue to flow down.
    
    
    On the other hand, the property in Lemma \ref{Lemma:EvenApproach} does not hold for the points $x_0,y_0$ which weld under the upwards flow to $\gamma(1)$, the corner of the curve.  To see this, map down the vertical line segment; since the remaining curve $G_1(\gamma([1,2]))$ is a line segment with angle $\alpha \pi$ to $\mathbb{R}$, we claim that further pulling down any small portion $G_1(\gamma([1,1+\epsilon]))$ with the centered mapping down function sends the base to points $x(\epsilon) < 0 < y(\epsilon)$ that satisfy 
    \begin{align}\label{Eq:RatioBad}
        \frac{y(\epsilon)}{y(\epsilon) - x(\epsilon)} = \alpha \neq \frac{1}{2}.
    \end{align}
    One can see this explicitly; the conformal map $F:\mathbb{H} \rightarrow \mathbb{H} \bs L_\alpha$, 
    \begin{align*}
        L_\alpha: = \{ re^{i \alpha \pi} \, : \, 0 \leq r \leq \alpha^\alpha(1-\alpha)^{1-\alpha} \},
    \end{align*} 
    which satisfies $F(z) = z +O(1)$ as $z \rightarrow \infty$ is $F(z) = (z-\alpha)^\alpha(z+1-\alpha)^{1-\alpha}$ (see the construction in \cite[\S1 ``The Slit Algorithm'']{MRZip}, for instance).  This sends $\alpha-1$ and $\alpha$ to the base of the curve, which by scaling shows \eqref{Eq:RatioBad}.  Alternatively, the ratio of the harmonic measures of either side of $G_1(\gamma([1,1+\epsilon]))$ as seen from $\infty$ is always $r \neq 1$, independent of $\epsilon$.  So by conformal invariance of harmonic measure, the two intervals $[x(\epsilon),0]$, $[0, y(\epsilon)]$ one obtains upon mapping down also have the same ratio of lengths, yielding \eqref{Eq:RatioBad} again.  
\end{example}


We call the curve family whose existence is given by the follow theorem the \emph{energy minimizers for welding (EMW) family}.
\begin{theorem}\label{Thm:EMW}
    Fix $x_0<0<y_0$.
    \begin{enumerate}[$(i)$]
    \item (Uniqueness, SLE$_0$, and energy)\label{Thm:EMWEnergy} There exists a unique driving function $\xi$ with $\xi(0)=0$ which welds $x_0$ to $y_0$ at its base and minimizes the Loewner energy among all such drivers. Furthermore, $\xi$ is upwards SLE$_0(-4,-4)$ starting from $(\xi(0), V^1(0), V^2(0)) = (0, x_0, y_0)$, and satisfies
     \begin{align}\label{Eq:EMWEnergyFormula}
            I(\xi) = \frac{1}{2}\int_0^\tau \dot{\xi}(t)^2dt = -8 \log(2\sqrt{\alpha(1-\alpha)}) = -8\log \Big( \frac{2\sqrt{r}}{1+r} \Big)
        \end{align}
     where $\alpha := \frac{y_0}{y_0-x_0}$, $r:=-x_0/y_0$, and $\tau$ is the time $\xi$ takes to weld $x_0$ to $y_0$.  The driver $\xi$ is $C^\infty([0,\tau))$ and monotonic.

        \item \label{Thm:EMWDriver}(Driver and welding) When $y_0 \neq -x_0$, set $B = B(x_0,y_0):= \frac{(y_0-x_0)^3}{24|y_0+x_0|}$. The downwards driving function $\lambda = \lambda_{x_0,y_0}$ for the minimizer is explicitly
        \begin{align}\label{Eq:EMWDriverFormula}
            \lambda(t) &= -\frac{16}{\sqrt{3}}\, \sgn(y_0+x_0)t^{3/2}\left( B^{2/3} + 2 \Real \Big( \big(\sqrt{B^2-t^2} +it \big)^{2/3}\Big) \right)^{-3/2}
        \end{align}
        for $0 \leq t \leq \frac{1}{24}(x_0^2-4x_0y_0+y_0^2) =: \tau(x_0,y_0) = \tau$, where $z \mapsto z^{2/3}$ is defined with the principal branch of the logarithm.
        In particular, $\lambda(0)=0$ and $\lambda(\tau) = -\frac{2}{3}(x_0+y_0)$.
        
        When $r = -x_0/y_0 <1$, the conformal welding $\varphi:[x_0,0] \rightarrow [0,y_0]$ corresponding to $\lambda$ satisfies the implicit equation
        \begin{align}\label{Eq:EMWWeldImplicit}
            W(r,T \circ \varphi \circ T^{-1}(x)) = W(r,x)
        \end{align}
        for all $-\infty< x \leq y_0/x_0$, where
        \begin{align*}
            W(r,x) := rx + \frac{1}{x} + (1-r)\log(x),
        \end{align*}
        and $T(x) = T_{x_0,y_0}(x) := \frac{x-y_0}{x-x_0}$.  When $r>1$, one obtains a similar implicit equation by reflecting across the imaginary axis and using $\varphi^{-1}$.     
        
        \item \label{Thm:EMWSingleCurve}(Universal curve) The curve $\Gamma \subset \mathbb{H}\cup\{0\}$ satisfying 
        \begin{align}\label{Eq:EMWUniversalVariety}
            (x^2+y^2)^2 = -4xy,
        \end{align}
        continuously parametrized $t \mapsto \Gamma(t)$ so that its base is perpendicular to $\mathbb{R}$, is universal for the EMW curves, in the sense that for each ratio $r\in(0,1)$, there exists $t_r$ such that the segment $\Gamma([0,t_r])$ is an EMW curve which welds points $x_r<0<y_r$ with ratio $r = -x_r/y_r$ to its base. $\Gamma$'s downwards driving function is
        \begin{align*}
            \lambda_\Gamma(t) = -\frac{16}{\sqrt{3}}\,t^{3/2}\Big((\pi/6)^{2/3} + 2 \Real \Big( \big(\sqrt{(\pi/6)^2 -t^2} + it\big)^{2/3}\Big) \,\Big)^{-3/2}, \qquad 0 \leq t \leq \pi/6.
        \end{align*}
        Furthermore, $\Gamma^2 = \{z^2 \, : \, z \in \Gamma\}$ is the circle $x^2 + (y+1)^2 =1$.  The reflection $(x^2+y^2)^2 = 4xy$ of $\Gamma$ across the imaginary axis gives a similar universal curve $\Gamma'$ for ratios $r \in (1,\infty)$ with driver $\lambda_{\Gamma'} = -\lambda_{\Gamma}$.
        \item\label{Thm:DistinctFamilies} The EMP curve $\gamma_\theta$ coincides with an EMW curve if and only if $\theta =\pi/2$.
    \end{enumerate}
\end{theorem}
\begin{remark}\label{Remark:DualFam}
We precede the proof with several comments.
\begin{enumerate}[$(a)$] 
    \item In part $(\ref{Thm:EMWEnergy})$, $\xi$ welds $x_0$ and $y_0$ together at time $\tau$.  The meaning of welding these points ``at its base'' is that the lifetime of $\xi$ is $[0,\tau]$; there is no further curve generated after time $\tau$.  (Allowing a further curve eliminates uniqueness because we could continue flowing up with the constant driver.)   
    \item\label{Remark:RemainingEMW} Fix $x_0 < 0 <y_0$ with $-x_0 \neq y_0$, and flow up with the corresponding EMW driver $\xi$ for some time $t<\tau$.  Theorem \ref{Lemma:Inf98}($\ref{Thm:LocalCompareEMW}$) below shows that this initial portion of the curve is generally not itself an EMW curve, as we may take $\eta = \xi$.  However, the remaining curve that $\xi$ generates on $[t,\tau]$ must be a minimizer: if not, then we could replace $\xi$ with the corresponding EMW driver for  $x(t)$ and $y(t)$ and lower the energy.  Hence the symmetry of the EMW family is with respect to the ``base,'' or what remains to flow up, rather than with respect to the ``top,'' or the segment already flown up into $\mathbb{H}$. In terms of the downwards flow, this says $\gamma[0,t]$ always is an EMW curve, whereas $G_t(\gamma[0,t])$ is not. Thus it is natural to write \eqref{Eq:EMWDriverFormula} in terms of the downwards driver $\lambda$ rather than its reversal $\xi$.  Note also that this symmetry is the opposite of what we saw with the EMP curves in Remark \ref{Remark:RemainingWang}($\ref{Remark:RestOfWang}$).  
    \item The discussion around \eqref{Ineq:AlwaysNegative} below shows that $a \geq 0$ in the $\Real ((\sqrt{a}+it)^{2/3})$ term of the driver formula \eqref{Eq:EMWDriverFormula}.
\end{enumerate}
\end{remark}


To simplify notational clutter, we will interchangeably use $\xi(t)$ and $\xi_t$ and similarly for other functions of $t$.

\begin{proof}
    By Lemma \ref{Lemma:WeldingEnergyMinimizerExists} we may start with a minimizer $\xi$ for the welding problem, which is absolutely continuous since $I(\xi) =:L < \infty$.  We first show 
    \begin{align}\label{Eq:EMWDeriv}
        \dot{\xi}(t) = -4\Big( \frac{1}{x(t)} + \frac{1}{y(t)} \Big)
    \end{align}
    at all times $t$ where $\dot{\xi}$ exists and where $t$ is a Lebesgue point of $\dot{\xi}^2$, and where $x(t)$ and $y(t)$ are the images of $x_0$ and $y_0$ under the centered upwards-flow maps $H_t(z) := h_t(z)-\xi(t)$.  By \eqref{Eq:LoewnerEqUp}, we have that 
    \begin{align}\label{Eq:LoewnerEqUpCentered}
        \dot{H}_t(z) = - \frac{2}{H_t(z)} - \dot{\xi}(t)
    \end{align}
    at points $t$ of differentiability of $\xi$. By Lemma \ref{Lemma:EvenApproach}, $r(t):=-x(t)/y(t) \rightarrow 1$, and since $\xi$ minimizes energy, $r$ must (strictly) monotonically approach 1.  Indeed, if $r(t_0) = 1$ for some $t_0 < \tau$, then we must have $r(t) \equiv 1$ for all $t_0 \leq t \leq \tau$.  If $1 \neq r(t_1)=r(t_2)$ for some $t_1 < t_2$, then $\xi$ cannot be constant on $[t_1,t_2]$, because a ratio distinct from 1 changes under the constant driver.  So energy would be unnecessarily wasted on $[t_1,t_2]$, and we conclude that the claimed monotonicity holds.
    
    To be a minimizer, $\xi$ must expel as little energy as possible to move the ratio to $1$.  For $I(t) := \frac{1}{2}\int_0^t \dot{\xi}(s)^2ds$, we thus need to optimize $dr/dI$, in the sense of maximizing it when $r <1$ and minimizing it when $r>1$.

    Suppose first that $\xi$ is right-differentiable at $t=0$ and that $t=0$ is a Lebesgue point for $\dot{\xi}^2$.  Then the energy expelled on a small interval $[0,\Delta t]$ is $\frac{1}{2}\dot{\xi}^2(0)\Delta t + o(\Delta t)$, and $r$ is right-differentiable at $t=0$ with
    \begin{align}\label{Lim:drdI}
        \Delta r = \dot{r}(0)\Delta t + o(\Delta t) = \Big( \frac{2(y^2-x^2)}{xy^3} + \frac{\dot{\xi}(y-x)}{y^2} \Big)\Delta t + o(\Delta t)
    \end{align}
    by the Loewner equation \eqref{Eq:LoewnerEqUpCentered}, where $x,y$ and $\dot{\xi}$ are all evaluated at $t=0$, and $\dot{\xi}$ is the right derivative of $\xi$. We thus have
    \begin{align*}
        \frac{\Delta r}{\Delta I} \rightarrow \frac{4(y^2-x^2)}{\dot{\xi}^2 xy^3}+ \frac{2(y-x)}{\dot{\xi}y^2}
    \end{align*}
    as $\Delta t \rightarrow 0$. Calculus shows that this expression is optimized with respect to $\dot{\xi}$ when $\dot{\xi}(0)$ satisfies \eqref{Eq:EMWDeriv} at $t=0$, in the sense of yielding a unique global maximum if $r(0)<1$ and a unique global minimum if $r(0)>1$ (and satisfying $\dot{\xi}=0$ if $r=1$, as needed).  Thus any minimizer for which $\dot{\xi}(0)$ exists and where $t=0$ is a Lebesgue point of $\dot{\xi}^2$ must satisfy  \eqref{Eq:EMWDeriv} at $t=0$.
     
     More generally, let $t_0$ be a point of differentiability of $\xi$ and a Lebesgue point of $\dot{\xi}^2$.  Note that the remaining driver $\xi|_{[t_0,\tau]}$ must be an energy minimizer for welding $x(t_0)$ to $y(t_0)$, as noted above in Remark \ref{Remark:DualFam}. By the previous paragraph, we see $\dot{\xi}(t_0)$ satisfies \eqref{Eq:EMWDeriv}.  Since $\xi$ is differentiable at a.e. $t$ and a.e. $t$ is a Lebesgue point of $\dot{\xi}^2 \in L^1([0,\tau])$, we have \eqref{Eq:EMWDeriv} at a.e. $t$.
    
    Plugging this formula into \eqref{Eq:LoewnerEqUpCentered} yields that $\xi(t),x(t)$ and $y(t)$ are absolutely continuous a.e. solutions to the system of differential equations
    \begin{align}\label{EMWDiffEqSystem}
        \dot{\xi}(t) = -\frac{4}{x(t)} - \frac{4}{y(t)}, \qquad \dot{x}(t) = \frac{2}{x(t)} + \frac{4}{y(t)}, \qquad \dot{y}(t) = \frac{4}{x(t)} + \frac{2}{y(t)}
        \end{align}
    with $\xi(0)= x(0)-x_0  = y(0)-y_0=0$, $0 \leq t < \tau$.  Since each function is absolutely continuous, replacing the derivatives with these continuous expressions for all $t$ does not change the values of the functions, and we see that each function is actually $C^1$.  Then each of the right-hand sides in \eqref{EMWDiffEqSystem} is $C^1$, and so each of the three functions is at least $C^2$. Continuing to bootstrap we see the functions are smooth on $[0,\tau)$.
    
    Note also that classical solutions to \eqref{EMWDiffEqSystem} are unique: by scale invariance $\min\{-x,y\} =1$ without loss of generality, and for small times $0 \leq t \leq t_0$, $|x(t)|$ and $|y(t)|$ are therefore both bounded below since $\xi$ is $\sqrt{L}$-\Hol-1/2 continuous.  Hence the map $f(t,\xi,x,y) = (-4(x^{-1}+y^{-1}), 2x^{-1}+4y^{-1}, 4x^{-1}+2y^{-1})$ is Lipschitz in $(\xi,x,y)$ on $[0,t_0]$, giving uniqueness.  Note that uniqueness also gives that if $r(0) \neq 1$, $r(t) \neq 1$ for all $t< \tau$, and so \eqref{Eq:EMWDeriv} also shows that $\xi$ is strictly monotone.

    Note that \eqref{EMWDiffEqSystem} states
    \begin{align*}
        \dot{\xi}(t) = - \frac{4}{h_t(x_0) - \xi(t)} - \frac{4}{h_t(y_0) - \xi(t)} =: - \frac{4}{V^1(t) - \xi(t)} - \frac{4}{V^2(t) - \xi(t)},
    \end{align*}
    which says that $\xi$ is upwards SLE$_0(-4,-4)$ starting from $(\xi(0), V^1(0), V^2(0)) = (0,x_0,y_0)$.
    
    For the energy formula, we note from \eqref{Lim:drdI} and our formula \eqref{Eq:EMWDeriv} for $\dot{\xi}$ that we have the ODE
    \begin{align*}
        \frac{dI}{dr} = \frac{4y}{x}\cdot \frac{x+y}{x-y} = \frac{4}{r} \cdot\frac{1-r}{1+r} = \frac{4}{r} - \frac{8}{r+1},
    \end{align*}
    and therefore
    \begin{align*}
        I(r(t)) - 0 = 4\log\Big(\frac{r(t)}{r_0} \Big) - 8\log\Big( \frac{r(t)+1}{r_0+1} \Big).
    \end{align*}
    Since $r(t) \rightarrow 1$, sending $t \rightarrow \tau^-$ yields \eqref{Eq:EMWEnergyFormula}.
    \bigskip
    
    For the explicit formulas in $(ii)$, we first derive the hitting time $\tau$, which follows from observing from \eqref{EMWDiffEqSystem} that
    \begin{align}\label{Eq:EMWTime}
        \frac{d}{dt}((y_t-x_t)^2 -2x_ty_t) = -24
    \end{align}
    for $0 \leq t < \tau$.  Hence integrating from $0$ to $\tau-\epsilon$ and sending $\epsilon \rightarrow 0$ yields
    \begin{align*}
        - (y_0-x_0)^2 +2x_0y_0 = -24\tau,
    \end{align*}
    as claimed (recall $x_\tau = y_\tau = 0$).  Also, if $A_t = (x_t+y_t)/2$, we find $\dot{A}_t = -\frac{3}{4}\dot{\xi}_t$ and hence
    \begin{align}\label{Eq:EMWEndPoint}
        0 - \frac{1}{2}(x_0+y_0) = -\frac{3}{4}\xi_\tau -0 \qquad \text{or} \qquad \xi_\tau = \frac{2}{3}(x_0+y_0).
    \end{align}
   
    We next claim that, in addition to \eqref{Eq:EMWDeriv}, $\xi$ also satisfies the ODE
    \begin{align}\label{Eq:EMWDeriv2}
        \dot{\xi}_t = \frac{16(\xi_t - \xi_\tau)}{(\xi_t-\xi_\tau)^2 - \frac{32}{3}(\tau-t)}
    \end{align}
    for $0 \leq t < \tau$.  To obtain this, we first observe that after flowing up for some time $t_0 < \tau$, the driver $\tilde{\xi}_t:=\xi_{t+t_0} - \xi_{t_0}$ on $0 \leq t \leq \tau-t_0$ is the EMW driver for welding $x_{t_0}$ to $y_{t_0}$, as discussed in Remark \ref{Remark:DualFam}($\ref{Remark:RemainingEMW}$).  Thus, from \eqref{Eq:EMWEndPoint}, we have
    \begin{align*}
        \frac{2}{3}(x_{t_0}+y_{t_0}) = \tilde{\xi}_{\tau-t_0} = \xi_{\tau} - \xi_{t_0},
    \end{align*}
    or in other words 
    \begin{align*}
        \xi_t -\xi_\tau = -\frac{2}{3}(x_t+y_t)
    \end{align*}
    for any $0 \leq t \leq \tau$.  Note that by integrating \eqref{Eq:EMWTime} from $t$ to $\tau-\epsilon$ and sending $\epsilon \rightarrow 0$ we also have 
    \begin{align*}
        \tau - t = \frac{1}{24}\big((y_t-x_t)^2-2x_ty_t \big) = \frac{1}{24}\big((y_t+x_t)^2-6x_ty_t \big)
    \end{align*}
    for $0 \leq t \leq \tau$, and plugging in these last two formulas into the right-hand side of \eqref{Eq:EMWDeriv2} yields
    \begin{align*}
        \frac{16(\xi_t - \xi_\tau)}{(\xi_t-\xi_\tau)^2 - \frac{32}{3}(\tau-t)} = \frac{-\frac{32}{3}(x_t+y_t)}{\frac{4}{9}(x_t+y_t)^2-\frac{4}{9}\big((x_t+y_t)^2-6x_ty_t \big)} = -4 \Big( \frac{1}{x_t}+\frac{1}{y_t} \Big),
    \end{align*}
   thus verifying \eqref{Eq:EMWDeriv2} in light of \eqref{EMWDiffEqSystem}.
    
    We proceed to obtain $\xi$ by solving \eqref{Eq:EMWDeriv2}, and start by introducing the change of variables
    \begin{align*}
        \nu(t) := \xi\Big(\tau + \frac{3t}{32} \Big) - \frac{2}{3}(x_0 + y_0).
    \end{align*}
    We will momentarily assume that $-x <y$, so that $\nu$ is a non-positive, increasing function defined for $-\frac{32}{3}\tau \leq t \leq 0$, and which vanishes only at $t=0$.  In terms of $\nu$, \eqref{Eq:EMWDeriv2} says
    \begin{align*}
        \dot{\nu} = \frac{\frac{3}{2}\nu}{\nu^2+t},
    \end{align*}
    and so if we define $\mu(t) := \nu(t)^{-2/3}$, where $w \mapsto w^{-2/3}$ is the precalculus function mapping $\mathbb{R}\backslash\{0\}$ to $(0,\infty)$, we see
    \begin{align*}
        \dot{\mu} = \frac{-\mu}{\mu^{-3}+t}, \qquad \text{implying} \qquad \frac{d}{dt}\big( -\frac{1}{2} \mu^{-2} + t\mu \big) = 0,
    \end{align*}
    and thus $-\frac{1}{2} \mu(t)^{-2} + t\mu(t) = C$.  Evaluating at $t = -\frac{32}{3}\tau$ yields
    \begin{align*}
        C = -\Big( \frac{2}{3}\Big)^{1/3}\frac{(y_0-x_0)^2}{(y_0+x_0)^{2/3}},
    \end{align*}
    and we see that $\mu$ satisfies
    \begin{align}\label{Eq:muCubic}
         \mu^3 - \frac{C}{t} \mu^2 - \frac{1}{2t} =0.
    \end{align}
    The corresponding depressed cubic in $\tilde{\mu} := \mu - \frac{C}{3t}$ is
    \begin{align}\label{Eq:tmuCubic}
        \tilde{\mu}^3 - \frac{C^2}{3t^2}\tilde{\mu} - \frac{2C^3}{27t^3} - \frac{1}{2t} = 0,
    \end{align}
    which has discriminant
    \begin{align*}
        4\Big(\frac{C^2}{3t^2}\Big)^3-27 \Big(\frac{2C^3}{27t^3} + \frac{1}{2t}\Big)^2 = -\frac{2C^3}{t^4}- \frac{27}{4t^2}.
    \end{align*}
    Hence there are three real solutions $\tilde{\mu}$ to \eqref{Eq:tmuCubic} when $\frac{27}{8}t^2 < -C^3$, or
    \begin{align*}
        \frac{81}{16}t^2 < \frac{(y_0-x_0)^6}{(y_0+x_0)^2},
    \end{align*}
    and as $-\frac{32}{3}\tau \leq t < 0$, we have $81t^2/16 \leq 576 \tau^2 = (x_0^2-4x_0y_0+y_0^2)^2.$ The inequality
    \begin{align}\label{Ineq:AlwaysNegative}
        (x_0^2-4x_0y_0+y_0^2)^2 < \frac{(y_0-x_0)^6}{(y_0+x_0)^2}
    \end{align}
    is equivalent to $(3y_0-x_0)(y_0-3x_0)>0$, which is always true, and so \eqref{Eq:tmuCubic} always has three real roots.  Using Cardano's method, we see that the zeros are
    \begin{multline*}
        \tilde{\mu}_k = e^{2\pi ik/3}\sqrt[3]{\frac{1}{27t^3}\left(C^3 + \frac{27}{4}t^2 + \frac{27}{12}t\sqrt{\frac{8}{3}C^3 + 9t^2} \right)}\\
        +e^{-2\pi ik/3}\sqrt[3]{\frac{1}{27t^3}\left(C^3 + \frac{27}{4}t^2 - \frac{27}{12}t\sqrt{\frac{8}{3}C^3 + 9t^2} \right)},
    \end{multline*}
    $k =0,1,2$, where all roots are the principal branches.  Recalling that inputs $t$ are negative for $\nu$ (and hence also for $\mu$ and $\tilde{\mu}$), and also that $\frac{8}{3}C^3 +9t^2<0$ by \eqref{Ineq:AlwaysNegative}, this simplifies to
    \begin{align*}
        \tilde{\mu}_k = -\frac{2}{3t}\Real\Bigg(e^{2\pi ik/3}\sqrt[3]{-C^3 - \frac{27}{4}t^2 - \frac{27}{12}ti\sqrt{-\frac{8}{3}C^3 - 9t^2} }\,\Bigg).
    \end{align*}
    Near $t=0^-$, the three solutions $\mu_k$ for $\mu(t)=\tilde{\mu}(t) + \frac{C}{3t}$ thus satisfy
    \begin{align*}
        \mu_k = \frac{2C}{3t}\Big(\cos\Big( \frac{2\pi k}{3} \Big) + \frac{1}{2} \Big) + O(1).
    \end{align*}
    Since $\mu(t) = \nu(t)^{-2/3} \rightarrow +\infty$ as $t \rightarrow 0^-$, our desired solution is when $k=0$, yielding 
    \begin{align*}
        \mu(t) = \frac{C}{3t}-\frac{2}{3t}\Real\Bigg(\sqrt[3]{-C^3 - \frac{27}{4}t^2 - \frac{27}{12}ti\sqrt{-\frac{8}{3}C^3 - 9t^2} }\,\Bigg).
    \end{align*}
    (Since $\Big|\sqrt[3]{-C^3 - \frac{27}{4}t^2 - \frac{27}{12}ti\sqrt{-\frac{8}{3}C^3 - 9t^2} }\Big| = -C$, it is also easy to see that this solution satisfies $\mu(t)>0$, as needed.)  Undoing our changes of variable, we have
    \begin{align*}
        \xi(t) = \frac{2}{3}(x_0+y_0) -  \mu^{-3/2}\Big(\frac{32}{3}(t-\tau)\Big),
    \end{align*}
    and using \eqref{Eq:EMWEndPoint}, we thus see that the downwards driver is $\lambda(t) = \xi(\tau-t)-\xi(\tau) = -\mu^{-3/2}(-32t/3)$.  Following arithmetic, we thus have
    \begin{align*}
        \lambda(t) &= -128\sqrt{3}\,t^{3/2}\Bigg(\frac{(y_0-x_0)^2}{(y_0+x_0)^{2/3}}+2\Real \sqrt[3]{\frac{(y_0-x_0)^6}{(y_0+x_0)^2}-1152t^2+48it\sqrt{\frac{(y_0-x_0)^6}{(y_0+x_0)^2}-576t^2}} \,\Bigg)^{-3/2}\\
        &= -\frac{16}{\sqrt{3}}\,t^{3/2}\Bigg(\frac{(y_0-x_0)^2}{24^{2/3}(y_0+x_0)^{2/3}}+2\Real \sqrt[3]{\bigg( \sqrt{\frac{(y_0-x_0)^6}{576(y_0+x_0)^2}-t^2}+it  \bigg)^2}\,\Bigg)^{-3/2}.
    \end{align*}
    Since the complex number $w=\sqrt{(y_0-x_0)^6/576(y_0+x_0)^2-t^2}+it$ is always in the first quadrant, we may interchange the order of the square and principal cube root and simply write $\sqrt[3]{w^2} = w^{2/3}$. 
    
    We assumed $-x_0<y_0$, and by symmetry the driver $\lambda_{x_0,y_0}(t)$ for $y_0<-x_0$ is $-\lambda_{-y_0,-x_0}(t)$.  We conclude that the proposed formula \eqref{Eq:EMWDriverFormula} holds in either case.
    
    We prove the implicit equation \eqref{Eq:EMWWeldImplicit} for the welding below at the end of our proof for part $(\ref{Thm:EMWSingleCurve})$.
    \bigskip

        For $(\ref{Thm:EMWSingleCurve})$, we start by arguing for the existence of a (initially non-explicit) universal curve $\Gamma$ and compute its driver.  Note that if we have two ratios $0 < r_1 < r_2<1$ and we flow up with an energy-minimizing driver $\xi_1$ for $r_1$ (defined uniquely up to scaling), by Lemma \ref{Lemma:EvenApproach} the ratio $-x(t)/y(t)$ of its welding endpoints will monotonically approach 1 and hence be $r_2$ at some time $t_2$.  At this point, as explained in Remark \ref{Remark:DualFam}($\ref{Remark:RemainingEMW}$), the remaining curve $\gamma_{r_2}$ generated by $\xi_1$ on $[t_2,\tau_1]$ is the energy-minimizing curve for $r_2$ (again, defined uniquely up to scaling).  Thus the resulting curve $\gamma_{r_1}$ for $r_1$ contains $\gamma_{r_2}$ as a sub-curve.  Similarly, by starting with lower ratios $0 < r < r_1$ and re-scaling the curve $\gamma_r$ to some $c_r\gamma_r$ so that the sub-curve of $c_r\gamma_r$ corresponding to ratio $r_1$ is $\gamma_{r_1}$, we see that we obtain a simple curve $\bigcup_{0<r<1} c_r\gamma_r$, where each $c_r\gamma_r$ is a curve generated by an EMW driver with welding endpoints of ratio $r$, and where 
        \begin{align}\label{Inclusion:UniversalEMW}
            c_{r_2}\gamma_{r_2} \subset c_{r_1}\gamma_{r_1} \qquad \text{whenever} \qquad r_1 < r_2.
        \end{align}
        Hence such a curve $\Gamma$ exists, and is defined uniquely up to scaling.  We proceed to pick the representative among all scaled versions that will yield the formula \eqref{Eq:EMWUniversalVariety}.
        
        Note that the inclusion \eqref{Inclusion:UniversalEMW} shows that the downwards driving function $\lambda_{r_1}$ of $c_{r_1}\gamma_{r_1}$ is simply an extension of the driver $\lambda_{r_2}$ for  $c_{r_2}\gamma_{r_2}$, and hence the driver $\lambda_{\Gamma}$ for $\Gamma$ is the limit of the $\lambda_r$'s.  Begin by choosing the nested family $\{c_r\gamma_r\}_{0 < r < 1}$ such that right-end of the welding interval $y_r$ for $c_r\gamma_r$ satisfies
        \begin{align}\label{Lim:EMWUNiversaly}
            y_r \rightarrow 2\sqrt{\pi} \qquad \text{ as } \qquad r \rightarrow 0,
        \end{align}
         and set $\Gamma:= \bigcup_{0 < r <1}c_r\gamma_r$.\footnote{Such a choice of $\gamma_r$ is possible.  For instance, as $r$ decreases to zero, we may re-scale the curves so that the right endpoint $y_r$ is always $2\sqrt{\pi}$.  In the end the curve does not degenerate to the origin, since the diameter of each curve is comparable to $2\sqrt{\pi}$ \cite[Prop.4.4]{LMR}.  Thus any choice of $\Gamma$ is bounded, and we may choose the scale for the limiting curve such that \eqref{Lim:EMWUNiversaly} holds.}    Writing $\lambda(t) = \lambda(x,y,t)$, we have 
        \begin{align}
            \lambda_{\Gamma}(t) = \lim_{r \rightarrow 0} \lambda(x_r,y_r,t) &= \lambda(0,2\sqrt{\pi},t) \notag \\
            &= -\frac{16}{\sqrt{3}}\,t^{3/2}\Big((\pi/6)^{2/3} + 2 \Real \big(\sqrt{(\pi/6)^2 -t^2} + it\big)^{2/3} \,\Big)^{-3/2} \label{Eq:EMWUniversalDriver}
        \end{align}
        for $0 \leq t \leq \pi/6 = \tau(0,2\sqrt{\pi})$, where the second equality is by the continuity in \eqref{Eq:EMWDriverFormula} at $(x,y) = (0,2\sqrt{\pi})$.  Thus the claimed driver formula holds.
        
        Now let $\tilde{\Gamma}$ be the intersection of $\{\, (x,y) \in \mathbb{R}^2 \;: \; (x^2+y^2)^2 = -4xy\,\}$ with $\mathbb{H}\cup\{0\}$, parametrized by half-plane capacity.  We wish to show $\tilde{\Gamma}$ has \eqref{Eq:EMWUniversalDriver} for its driving function.  To do so, we explicitly compute the conformal map $F_t$ from $\mathbb{H}$ to the complement of a portion $\tilde{\Gamma}_t$ of $\tilde{\Gamma}$.  Appropriately normalizing $F_t$ at infinity, we will read off its driver and see that it is  \eqref{Eq:EMWUniversalDriver}.
\begin{figure}
    \centering
    \includegraphics[scale=0.12]{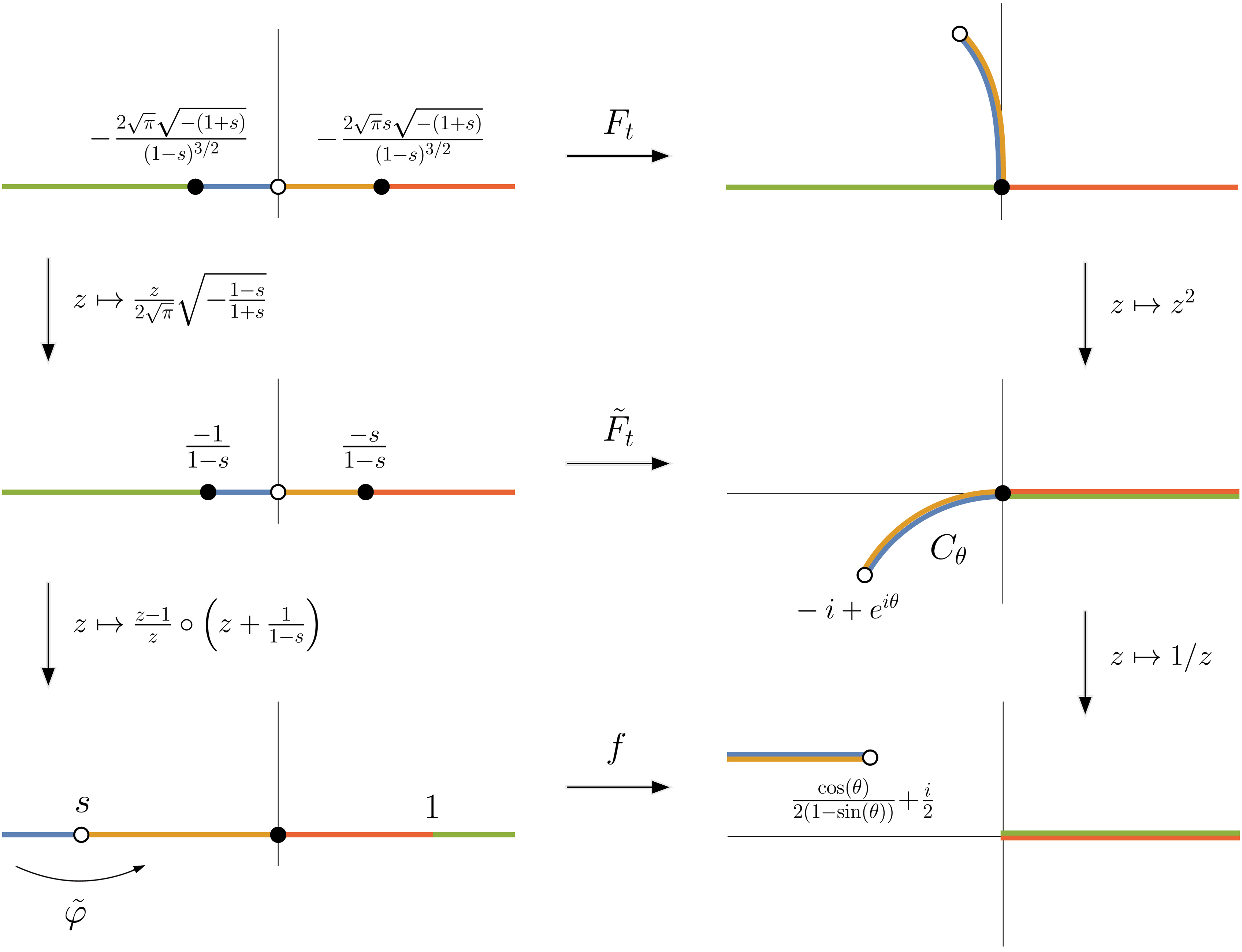}
    \caption{\small Constructing the conformal map for the EMW curves.}
    \label{Fig:EMWUniversalConstruction}
\end{figure}  

First, note that if $z = u+iv \sim (u,v) \in \tilde{\Gamma}$, then for $z^2 \sim (u^2 -v^2, 2uv) =: (x,y)$, we have
        \begin{align*}
            x^2 + (y+1)^2 = (u^2+v^2)^2+4uv + 1 = 1,
        \end{align*}
        and thus $\tilde{\Gamma}^2 \subset \{x^2+(y+1)^2 =1\}$.  By continuity the two sets are identical.  As circles are more tractable, we find $F_t$ by first computing the conformal map $\tilde{F}_t:\mathbb{H} \rightarrow \mathbb{C} \bs C_\theta$, where for $\pi/2 < \theta < 5\pi/2$,
        \begin{align*}
            C_\theta := \{\, -i + e^{i \alpha} \; : \; \pi/2 < \alpha \leq \theta \,\} \cup \mathbb{R}_{\geq 0},
        \end{align*}
        and $\theta = \theta(t)$.  See Figure \ref{Fig:EMWUniversalConstruction}.  Note that $1/C_\theta$ consists of two half-infinite horizontal line segments, 
        \begin{align*}
            \Big\{\, \frac{\cos(\alpha)}{2(1-\sin(\alpha))} + \frac{i}{2} \; : \; \frac{\pi}{2} < \alpha \leq \theta \,\Big\} \cup \mathbb{R}_{\geq 0}.
        \end{align*}
        For a parameter $s = s(\theta)<0$, consider the conformal map $f = f_\theta$ from $\mathbb{H}$ to $\mathbb{C} \bs (1/C_\theta)$ which takes $(-\infty,s]$ to the top half of the upper horizontal line, $[s,0)$ to its lower side, and then $(0,1]$ and $[1,\infty)$ to the bottom and top halves of $\mathbb{R}_{\geq 0}$, respectively.  That is, $f$ ``folds'' $(-\infty,s]\cup[s,0)$ and $(0,1]\cup[1,\infty)$ to the upper and lower line segments, respectively. The idea behind the Schwarz-Christoffel method (see \cite{MarshallComplex}, for instance) allows us to compute $f$, as we observe that $f'(x)$ is defined and has constant argument on each of the four intervals $(-\infty,s), (s,0), (0,1), (1,\infty)$.  Futhermore, as $x$ increases, $f'$ has changes of argument of $-\pi, 2\pi$ and $-\pi$, respectively, at the interfaces of the intervals (the $2\pi$ change is clear through considering small half circles centered at $z=0$).  Hence by Lindel\"{o}f's maximum principle \cite{MarshallComplex},
        \begin{align*}
            \Imag \log f'(z) &= \arg(z-1) - 2 \arg(z) +\arg(z-s) =\Imag \log \frac{(z-1)(z-s)}{z^2} + C_1,
        \end{align*}
        and so $f'(z) = C_2(z-1)(z-s)/z^2$, whence integration yields $f(z) = C_2\big(z - \frac{s}{z} - (1+s)\log(z) \big) + C_3$.  Since $f(1)=0$ (recall $f$ ``folds'' there), we see $C_3 = -C_2(1-s)$, and as $\Imag f(x) = 1/2$ for $x<0$, we find $C_2 = -1/(2\pi(1+s))$, yielding
        \begin{align}\label{Eq:EMWUniversalf}
            f(z) = \frac{-1}{2\pi(1+s)}\Big(z - \frac{s}{z} - (1+s)\log(z) \Big) + \frac{1-s}{2\pi(1+s)}.
        \end{align}
        Upon reciprocation back to $\mathbb{C} \bs C_\theta$, we want a map from $\mathbb{H}$ which fixes infinity and sends zero to the tip $-i + e^{i\theta}$, and we thus consider
        \begin{align}
            \tilde{F}_t(z) :=& \frac{1}{f(z)}\circ \frac{z-1}{z} \circ \Big( z+ \frac{1}{1-s} \Big) \notag\\
            =& 2\pi \frac{1+s}{1-s} \bigg(\frac{s-(1-s)^2z^2}{\big((1-s)z+s \big) \big((1-s)z+1 \big)}+ \frac{1+s}{1-s}
            \log \big( \frac{(1-s)z+s}{(1-s)z+1} \big) + 1 \bigg)^{-1} \label{Eq:TildeFt}\\
            =& -4\pi \frac{1+s}{1-s}z^2 - \frac{16\pi}{3} \Big(\frac{1+s}{1-s}\Big)^2 z - \frac{2\pi(1+s)(5s^2 + 28s+5)}{9(1-s)^3} + O(z^{-1}), \qquad z \rightarrow \infty. \notag
        \end{align}
    The expansion of the square $F_t^2$ of the centered mapping up function $F_t = g_t^{-1}(z+\lambda_t)$ for $\sqrt{C_\theta}$ is $z^2 + O(z)$ as $z \rightarrow \infty$ (here the square root for  $\sqrt{C_\theta}$ has branch cut along $\mathbb{R}_{\geq 0}$, sending the semi-circle to $\tilde{\Gamma}_t$).  Noting that $\tilde{F}_t$ maps $(-1/(1-s),0)$ to the inside of the circular arc and $(0,-s/(1-s))$ to the outside, we must have
    \begin{align*}
        \frac{1}{1-s} < \frac{-s}{1-s} \qquad \Leftrightarrow \qquad -1>s
    \end{align*}
    since the harmonic measure of the inside of the circle, as seen from $x\ll 0$, is less than that of the outside.  Thus $-4\pi(1+s)/(1-s)>0$, and we pre-compose with the appropriate dilation to obtain
    \begin{align*}
        F_t^2(z) &= \tilde{F}_t \circ \frac{1}{2\sqrt{\pi}} \sqrt{-\frac{1-s}{1+s}} \, z\\
        &= z^2 - \frac{8 \sqrt{\pi}}{3}\Big(- \frac{1+s}{1-s} \Big)^{3/2} z - 2\pi \frac{1+s}{(1-s)^3}(5s^2 + 28s+5) + O(z^{-1}), \qquad z \rightarrow \infty.
    \end{align*}
    Taking the square root, we thus have that the centered mapping-up function $F_t:\mathbb{H} \rightarrow \mathbb{H} \bs \tilde{\Gamma}_t$ is
    \begin{align*}
        F_t(z) = \sqrt{F_t^2(z)} = z - \frac{4\sqrt{\pi}}{3} \Big(- \frac{1+s}{1-s} \Big)^{3/2} + \frac{\pi}{3}\,\frac{1+s}{(1-s)^3}(s^2-4s+1) z^{-1} + O(z^{-2}), \qquad z \rightarrow \infty.
    \end{align*}
    Note that $F_t$ maps $0$ to the tip of the curve, and $x_t<0<y_t$ to the base, where
    \begin{align*}
        x_t = \frac{-2\sqrt{\pi}\sqrt{-(1+s)}}{(1-s)^{3/2}}, \qquad y_t = \frac{-2\sqrt{\pi} s \sqrt{-(1+s)}}{(1-s)^{3/2}},
    \end{align*}
    and hence the ratio $r = r_t := -x/y = -1/s<1$.  The expansion in terms of this (more natural) parameter $r$ is
    \begin{align*}
        F_t(z) = z - \frac{4\sqrt{\pi}}{3} \Big( \frac{1-r}{1+r} \Big)^{3/2} - \frac{\pi}{3}\,\frac{1-r}{(1+r)^3}(r^2+4r+1) z^{-1} + O(z^{-2}), 
    \end{align*}
    from which we see the Loewner time of $\tilde{\Gamma}_t$ is 
    \begin{align}\label{Eq:TildeTau}
        t = t(r) = \frac{\pi}{6}\,\frac{1-r}{(1+r)^3}(r^2+4r+1) < \frac{\pi}{6},
    \end{align}
    and the ending driver position is 
    \begin{align}\label{Eq:TildeLambda}
        \tilde{\lambda}(t) = - \frac{4\sqrt{\pi}}{3} \Big( \frac{1-r}{1+r} \Big)^{3/2}.
    \end{align}
    We proceed to solve for $r$ in terms of $t$ in \eqref{Eq:TildeTau} and then show  $\tilde{\lambda}(r(t))$ is the same as our other formula \eqref{Eq:EMWUniversalDriver}.
        
    From \eqref{Eq:TildeTau} we see that $r^3 + 3r^2 +3\alpha r + \alpha =0$ for $\alpha = \frac{t-\pi/6}{t+\pi/6}$, for which the corresponding depressed cubic in $q:=r+1$ is
    \begin{align}\label{Eq:TimeCubic}
        q^3-3\beta q + 2\beta =0, \qquad \beta = \frac{\pi/3}{t+\pi/6}
    \end{align}
    which has discriminant $108\beta^3 -108\beta^2 >0$ by \eqref{Eq:TildeTau}, and hence three real solutions.  As $r \nearrow 1$, $t \searrow 0$, and at $t=0$ we have $q^3-3\beta q + 2\beta = (q-2)(q^2+2q-2)$, for which $q=2 \sim r=1$ is the largest solution.  By continuity (of roots of $q$ with respect to $t$) we thus seek the largest solution to \eqref{Eq:TimeCubic}. By Cardano's method the solutions are
    \begin{align*}
        q = e^{2\pi i k/3} \sqrt[3]{-\beta + i\beta \sqrt{\beta-1}}+ e^{-2\pi i k/3}\sqrt[3]{-\beta -i\beta \sqrt{\beta-1}},
    \end{align*}
    $k =0,1,2$, where all roots are principal branches.  From the $t=0$ case we see we need $k=0$, and thus find
    \begin{align*}
        r(t) &= -1 + \sqrt[3]{\frac{\pi/3}{\pi/6 + t}} \bigg( \sqrt[3]{-1 + i \sqrt{\frac{\pi/6 - t}{\pi/6 + t}}} + \sqrt[3]{-1 - i \sqrt{\frac{\pi/6 - t}{\pi/6 + t}}}\, \bigg)\\
        &= -1 + \frac{\sqrt[3]{\pi/3}}{\sqrt{\pi/6+t}}\Big( \sqrt[3]{-\sqrt{\pi/6+t}+i\sqrt{\pi/6-t}} + \sqrt[3]{-\sqrt{\pi/6+t}-i\sqrt{\pi/6-t}} \, \Big).
    \end{align*}
    Writing $\mu_1 := \sqrt{\pi/6+t}$, $\mu_2:=\sqrt{\pi/6-t}$ and noting $\sqrt[3]{\pi/3} = \sqrt[3]{-\mu_1+i\mu_2}\sqrt[3]{-\mu_1-i\mu_2}$, we have that our expression \eqref{Eq:TildeLambda} for the driver becomes
    \begin{align*}
        \tilde{\lambda}(t) = - \frac{4}{\sqrt{3}}\Bigg( \frac{2\mu_1 - \sqrt[3]{-\mu_1+i\mu_2}^2\sqrt[3]{-\mu_1-i\mu_2}-\sqrt[3]{-\mu_1+i\mu_2}\sqrt[3]{-\mu_1-i\mu_2}^2}{\sqrt[3]{-\mu_1+i\mu_2} + \sqrt[3]{-\mu_1-i\mu_2}} \Bigg)^{3/2}.
    \end{align*}
    To show this is equivalent to \eqref{Eq:EMWUniversalDriver}, we first observe
    \begin{align*}
        \sqrt[3]{-\mu_1+i\mu_2}^2 &= e^{i2\pi/3}\sqrt[3]{(-\mu_1+i\mu_2)^2}\\
        &= 2^{1/3}e^{i2\pi/3}\sqrt[3]{t-i\sqrt{(\pi/6)^2-t^2}}\\
        &= 2^{1/3}i \sqrt[3]{\sqrt{(\pi/6)^2-t^2} +it},
    \end{align*}
    and so, by conjugating, 
    \begin{align*}
        \sqrt[3]{-\mu_1-i\mu_2}^2 = -2^{1/3}i \sqrt[3]{\sqrt{(\pi/6)^2-t^2} -it}.
    \end{align*}
    The universal driver formula \eqref{Eq:EMWUniversalDriver} is thus
    \begin{align*}
         \lambda_{\Gamma}(t) &= -\frac{16}{\sqrt{3}}\,t^{3/2}\Big((\pi/6)^{2/3} + 2 \Real \big(\sqrt{(\pi/6)^2 -t^2} + it\big)^{2/3} \,\Big)^{-3/2}\\
         &= -\frac{32}{\sqrt{3}}\,t^{3/2}\Big((\pi/3)^{2/3} -\sqrt[3]{-\mu_1-i\mu_2}^4 - \sqrt[3]{-\mu_1+i\mu_2}^4 \,\Big)^{-3/2}\\
         &= -\frac{32}{\sqrt{3}}\,t^{3/2}\Big(\sqrt[3]{-\mu_1+i\mu_2}^2\sqrt[3]{-\mu_1-i\mu_2}^2 -\sqrt[3]{-\mu_1-i\mu_2}^4 - \sqrt[3]{-\mu_1+i\mu_2}^4 \,\Big)^{-3/2}
    \end{align*}
    Writing $\nu:= \sqrt[3]{-\mu_1+i\mu_2} = \sqrt[3]{-\sqrt{\pi/6 +t}+i\sqrt{\pi/6-t}}$, we thus wish to show
    \begin{align*}
        -\frac{32}{\sqrt{3}}\,t^{3/2}\big( \nu^2\Bar{\nu}^2 - \Bar{\nu}^4-\nu^4 \big)^{-3/2} = - \frac{4}{\sqrt{3}}\Big( \frac{2\mu_1 - \nu^2 \Bar{\nu} - \nu \Bar{\nu}^2}{\nu + \Bar{\nu}} \Big)^{3/2}
    \end{align*}
    or
    \begin{align*}
        8t^{3/2} = \Big(\frac{( \nu^2\Bar{\nu}^2 - \Bar{\nu}^4-\nu^4 )
    (2\mu_1 - \nu^2 \Bar{\nu} - \nu \Bar{\nu}^2)}{\nu + \Bar{\nu}}\Big)^{3/2},
    \end{align*}
    which follows from cancellations after expanding the numerator on the right-hand side.  Since the driving function uniquely identifies the curve, we have proven that the curve described by \eqref{Eq:EMWUniversalVariety} is the universal EMW curve.

    \bigskip
    
    With the conformal maps in hand, we return to prove the implicit relation \eqref{Eq:EMWWeldImplicit} for the welding.  Indeed, from our computation \eqref{Eq:EMWUniversalf} above, we see that the welding $\tilde{\varphi}$ generated by $f$ for the EMW curve $\gamma$ satisfies $W(r, \tilde{\varphi}(x)) = W(r,x)$ for all $-\infty<x \leq s = y_0/x_0$ (the identifications established by $f$ do not change when we take the square root of the reciprocal of the image of $f$ to obtain $\gamma$, as these are conformal maps). See Figure \ref{Fig:EMWUniversalConstruction}.  However, $\tilde{\varphi} = T \circ \varphi \circ T^{-1}$, where $T$ is the \Mob automorphism of $\mathbb{H}$ sending the triple $(x_0,0,y_0)$ to $(-\infty, s,0)$.  In terms of our maps in Figure \ref{Fig:EMWUniversalConstruction}, we have
    \begin{align*}
        T(x) = \frac{x-1}{x} \circ \Big(x + \frac{1}{1-s} \Big) \circ \frac{x}{2\sqrt{\pi}}\sqrt{- \frac{1-s}{1+s}} \circ \frac{-2\sqrt{\pi}\, x \, \sqrt{-(1+s)}}{x_0(1-s)^{3/2}} = \frac{x-y_0}{x-x_0},
    \end{align*}
    as claimed.
    
    \bigskip
    
    Lastly, for part $(\ref{Thm:DistinctFamilies})$, it is clear that if $\theta=\pi/2$, then the EMP curve is the vertical line segment $[0,i]$, which is the same as the EMW curve for $x=-1,y=1$.  Both use zero energy.  For $\theta \neq 0$, we see from \eqref{Eq:EMWDriverFormula} that near $t=0$, the EMW driver satisfies $\lambda(t) = Ct^{3/2} + O(t^{5/2})$ for some $C      \neq 0$.  However, from \eqref{Eq:WangDriverDown} we see that the downwards Wang driver $\lambda_\theta$ is always smooth near $t=0$ (no radicands vanish at $t=0$), and so the drivers are never identical.
\end{proof}


\section{Energy comparisons}\label{Sec:Compare}
In this section we explore how the close sufficiently-smooth curves are to be locally energy minimizing, and how the energies of our two minimizing families compare.

\begin{theorem}\label{Lemma:Inf98}
Let $\epsilon>0$.
\begin{enumerate}[$(i)$]
    \item \label{Thm:LocalCompareWang} If $\gamma$ is any curve driven by $\lambda \in C^{3/2+\epsilon}([0,T))$ with $\lambda(0)=0$, $\dot{\lambda}(0) \neq 0$, then
    \begin{align*}
        \lim_{\delta \rightarrow 0^+} \frac{I(\gamma[0,\delta])}{I(\text{Wang   minimizer from 0 to }\gamma(\delta))} = \frac{9}{8}.
    \end{align*}

    \item \label{Thm:LocalCompareEMW} Let $\eta \in C^{3/2+\epsilon}([0,T))$ be a driver with $\eta(0)=0$, $\dot{\eta}(0) \neq 0$, and $\tilde{\gamma}[0,\delta]$ the curve generated by $\eta$ by means of the upwards Loewner flow on $[0,\delta]$.  Let $u(\delta)<0<v(\delta)$ be the points welded to the base of $\tilde{\gamma}[0,\delta]$ at time $\delta$.  Then
    \begin{align*}
       \lim_{\delta \rightarrow 0^+} \frac{I(\eta[0,\delta])}{I(\text{EMW minimizer for }u(\delta), v(\delta))} = \frac{9}{8}.
    \end{align*}
\end{enumerate}
\end{theorem}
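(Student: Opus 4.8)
The plan is to reduce both statements to short first-order expansions in $\delta$, since in each part the numerator and denominator vanish at the same rate as $\delta \to 0^+$ and the claim concerns only the ratio of their leading coefficients. For the numerators, the hypothesis $\lambda,\eta \in C^{3/2+\epsilon}$ makes $\dot\lambda,\dot\eta$ continuous, so
\begin{align*}
    I(\gamma[0,\delta]) = \frac12\int_0^\delta \dot\lambda(t)^2\,dt = \frac12 \dot\lambda(0)^2\,\delta + o(\delta), \qquad I(\eta[0,\delta]) = \frac12\dot\eta(0)^2\,\delta + o(\delta).
\end{align*}
Thus in each part it remains only to expand the denominator to first order and read off the coefficient. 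Throughout write $c := \dot\lambda(0)$ in part $(\ref{Thm:LocalCompareWang})$ and $c := \dot\eta(0)$ in part $(\ref{Thm:LocalCompareEMW})$.

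For part $(\ref{Thm:LocalCompareWang})$, I would invoke the tip expansion of \cite{LindTran}, which under $C^{3/2+\epsilon}$-regularity reads $\gamma(\delta) = 2i\sqrt\delta + \tfrac23 c\,\delta + o(\delta)$ as $\delta \to 0^+$. (One can re-derive the coefficient $\tfrac23$ from the reverse flow: the tip is the value at $s=\delta$ of $K$ solving $\dot K = c - 2/K$, $K(0)=0$; substituting $K = 2i\sqrt s + \psi$ yields the linear correction equation $\psi' + \psi/(2s) = c$, solved by $\psi = \tfrac23 c\,s$ via the integrating factor $\sqrt s$.) Since the correction $\tfrac23 c\,\delta$ is real, the argument of the tip satisfies
\begin{align*}
    \cot\theta(\delta) = \frac{\Real\gamma(\delta)}{\Imag\gamma(\delta)} = \frac{\tfrac23 c\,\delta + o(\delta)}{2\sqrt\delta + o(\sqrt\delta)} = \frac{c}{3}\sqrt\delta\,\big(1+o(1)\big),
\end{align*}
so $\theta(\delta) = \tfrac\pi2 - \tfrac{c}{3}\sqrt\delta + o(\sqrt\delta)$ and $\sin\theta(\delta) = 1 - \tfrac{c^2}{18}\,\delta + o(\delta)$. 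Feeding this into the Wang energy formula \eqref{Eq:WangEnergy} gives $I(\text{Wang minimizer to }\gamma(\delta)) = -8\log\sin\theta(\delta) = \tfrac49 c^2\,\delta + o(\delta)$, and dividing by the numerator $\tfrac12 c^2\delta$ yields the limit $\tfrac{1/2}{4/9} = \tfrac98$.

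For part $(\ref{Thm:LocalCompareEMW})$ the identical scheme applies, with the welding expansion \eqref{Eq:InfinitesimalWelding} playing the role of the tip expansion. That expansion gives, again with the universal coefficient $\tfrac23$, $u(\delta) = -2\sqrt\delta + \tfrac23 c\,\delta + o(\delta)$ and $v(\delta) = 2\sqrt\delta + \tfrac23 c\,\delta + o(\delta)$ (derivable by running the centered upward system $\dot w = -2/w - c$ back from the merge on each of the two branches $w \approx \pm 2\sqrt\sigma$, $\sigma = \delta-t$, which produces the same linear equation for the correction). Hence the welding ratio is
\begin{align*}
    r(\delta) = -\frac{u(\delta)}{v(\delta)} = \frac{1 - \tfrac{c}{3}\sqrt\delta}{1 + \tfrac{c}{3}\sqrt\delta} + o(\sqrt\delta) = 1 - \tfrac{2c}{3}\sqrt\delta + o(\sqrt\delta).
\end{align*}
Writing $\rho := r(\delta) - 1$ and expanding the EMW energy formula \eqref{Eq:EMWEnergyFormula} near $r=1$ gives $\tfrac{2\sqrt r}{1+r} = 1 - \tfrac18\rho^2 + O(\rho^3)$, so $I(\text{EMW minimizer for }u(\delta),v(\delta)) = -8\log\big(\tfrac{2\sqrt{r(\delta)}}{1+r(\delta)}\big) = \rho^2 + o(\delta) = \tfrac49 c^2\,\delta + o(\delta)$, and the ratio against $\tfrac12 c^2\delta$ is once more $\tfrac98$.

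The main obstacle is not the algebra above but securing the two first-order expansions with a genuine $o(\delta)$ error, which is exactly where $\lambda,\eta \in C^{3/2+\epsilon}$ enters: writing $\dot\lambda(t) = c + O(t^{1/2+\epsilon})$ turns the forcing in the correction equation into $c + O(s^{1/2+\epsilon})$, so the integrating-factor computation yields $\psi = \tfrac23 c\,s + O(s^{3/2+\epsilon})$, an error of order $o(s)$, which is precisely the precision needed to pin down $\Real\gamma(\delta)$ (respectively $r(\delta)$) to leading order. I would therefore cite \cite{LindTran} for the rigorous tip expansion and \eqref{Eq:InfinitesimalWelding} for the welding expansion rather than re-prove them. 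Conceptually, the coincidence of the two limits at the common value $\tfrac98$ traces back to the shared coefficient $\tfrac23$ in both expansions, so the two superficially different minimization problems have the same local inefficiency constant; and since mapping down an initial portion of either curve only rescales $\dot\lambda(0)$, $\dot\eta(0)$ while leaving the ratios unchanged, the conclusion transfers from $t=0$ to any interior time.
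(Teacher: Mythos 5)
Your proposal is correct and follows essentially the same route as the paper: both parts reduce to the quoted tip expansion of Lind--Tran and the welding expansion \eqref{Eq:InfinitesimalWelding}, extract the $\sqrt{\delta}$-order deviation of the angle (resp.\ welding ratio) from its symmetric value, and feed it into the quadratic expansions of the two energy formulas to get the common ratio $9/8$. The only cosmetic difference is that you parametrize the welding by $r=-u/v$ where the paper uses $\alpha = v/(v-u)$, which changes nothing.
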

Our proof will use infinitesimal expansions for the parametrized curve and the points welded together.  The former says that for a driver $\lambda \in C^{3/2+\epsilon}$, the associated curve $\gamma = \gamma^\lambda$ satisfies
    \begin{align}\label{Eq:InfinitesimalCurve}
        \gamma(\delta) = 2i\sqrt{\delta} + \frac{2}{3}\dot{\lambda}(0)\delta - \frac{i}{18}\dot{\lambda}(0)^2\delta^{3/2} + O(\delta^{3/2 + \epsilon})
    \end{align}
    as $\delta \rightarrow 0$ \cite[Prop. 6.2]{LindTran}.  The latter is a similar expansion for the conformal welding, and says that for a driver $\eta \in C^{3/2+\epsilon}$, the points $x(\delta) < 0 < y(\delta)$ welded together at time $\delta$ by the upwards flow generated by $\eta$ are
    \begin{align}\label{Eq:InfinitesimalWelding}
    \begin{split}
		x(\delta) &= -2 \sqrt{\delta} + \frac{2}{3}\dot{\xi}(0)\delta - \frac{1}{18}\dot{\xi}(0)^2 \delta^{3/2} + O(\delta^{3/2+\epsilon}),\\
		y(\delta) &= 2 \sqrt{\delta} + \frac{2}{3}\dot{\xi}(0)\delta + \frac{1}{18}\dot{\xi}(0)^2 \delta^{3/2} + O(\delta^{3/2+\epsilon})
	\end{split}
    \end{align}
    as $\delta \rightarrow 0$ \cite[Thm. 3.5]{Mesikepp}.

\begin{proof}
     The energy used by $\gamma[0,\delta]$ is $\frac{1}{2}\dot{\lambda}(0)^2\delta + o(\delta)$ as $\delta \rightarrow 0$.  By symmetry we may assume $\dot{\lambda}(0)>0$, and thus by \eqref{Eq:InfinitesimalCurve} the angle of the tip of the curve is 
    \begin{align*}
        \Arg(\gamma(\delta)) &= \frac{\pi}{2} - \arctan \left( \frac{\frac{2}{3}\dot{\lambda}(0)\delta  + O(\delta^{3/2+\epsilon})}{2\sqrt{\delta} - \frac{1}{18}\dot{\lambda}(0)^2 \delta^{3/2}+ O(\delta^{3/2+\epsilon})} \right) = \frac{\pi}{2}-\frac{1}{3}\dot{\lambda}(0)\sqrt{\delta} +O(\delta^{1+\epsilon}).
    \end{align*}
    Noting that $-8\log(\sin(\theta)) = 4(\theta-\pi/2)^2 + O(\theta - \pi/2)^4$, we see that the minimal-energy curve through this angle has energy
    \begin{align*}
        \frac{4}{9}\dot{\lambda}(0)^2\delta + O(\delta^{3/2 + \epsilon}),
    \end{align*}
    yielding the ratio
    \begin{equation*}
        \frac{\frac{1}{2}\dot{\lambda}(0)^2\delta + o(\delta)}{\frac{4}{9}\dot{\lambda}(0)^2\delta + O(\delta^{3/2+\epsilon})} \rightarrow \frac{9}{8}.
    \end{equation*}
    
    For the welding claim, we use the expansions in \eqref{Eq:InfinitesimalWelding} to see that the ratio $\alpha(\delta) := \frac{y(\delta)}{y(\delta)-x(\delta)}$ of the welded points $x(\delta)$ and $y(\delta)$ is
    \begin{align*}
        \alpha(\delta) = \frac{1}{2}+\frac{1}{6}\dot{\eta}(0)\sqrt{\delta} + O(\delta^{1+\epsilon}).
    \end{align*}
    Recalling from \eqref{Eq:EMWEnergyFormula} that the minimal energy for ratio $\alpha$ is
    \begin{align*}
        -4 \log(4\alpha(1-\alpha)) = 16(\alpha- 1/2)^2 + O(\alpha- 1/2)^4,
    \end{align*}
    we thus see the minimal energy to weld $x(\delta)$ and $y(\delta)$ is
    \begin{align*}
        \frac{4}{9}\dot{\xi}(0)^2\delta + O(\delta^{3/2+\epsilon}).
    \end{align*}
    As the actual energy is $\frac{1}{2}\dot{\xi}(0)^2\delta + o(\delta)$, we find the same asymptotic ratio.
\end{proof}

Lastly,  we wish to compare the energy usage between the two minimizing families themselves.  We have the following asymptotic results.  
\begin{theorem}\label{Thm:EMWWang}
     Let $x_r$ and $y_r$ satisfy $x_r<0<y_r$ with $-x_r/y_r=r$.  There is a  unique EMP curve which welds $x_r$ to $y_r$ at its base, and as $r \rightarrow 1$,
    \begin{align*}
         I(\text{EMW $\gamma$ welding $x_r$ and $y_r$ to its base}) = O (r-1)^2  = I(\text{Wang $\gamma$ welding $x_r$ and $y_r$ to its base}).
    \end{align*}
    Furthermore,
    \begin{align}\label{Lim:EnergySameBase}
        \lim_{r \rightarrow 1} \frac{I(\text{EMW $\gamma$ welding $x_r$ and $y_r$ to its base})}{I(\text{Wang $\gamma$ welding $x_r$ and $y_r$ to its base})} = \Big(\frac{\pi}{4}\Big)^2.
    \end{align}
    
    For each $\theta \in (0,\pi)$ there is a unique EMW curve with tip $e^{i\theta}$, and as $\theta \rightarrow \pi/2$,
    \begin{align*}
        I(\text{EMW $\gamma$ with tip at $e^{i\theta}$}) = O \Big(\theta - \frac{\pi}{2}\Big)^2 = I(\text{Wang $\gamma$ with tip at $e^{i\theta}$}).
    \end{align*}
    Furthermore,
    \begin{align}\label{Lim:EnergySameTip}
        \lim_{\theta \rightarrow \pi/2} \frac{I(\text{EMW $\gamma$ with tip at $e^{i\theta}$})}{I(\text{Wang $\gamma$ with tip at $e^{i\theta}$})} = \Big(\frac{4}{\pi}\Big)^2.
    \end{align}
\end{theorem}
\noindent See Remark \ref{Remark:Reciprocals} after the proof for a discussion of the fact that the constants in \eqref{Lim:EnergySameBase} and \eqref{Lim:EnergySameTip} are reciprocals.  We note in Remark \ref{Remark:EnergiesNotAlwaysSame} that these ratios do not hold for all $\theta$ and all $r$.
\begin{proof}
    In Theorem \ref{Thm:Wang}($\ref{Thm:WangDriver}$) we saw that the welding $\varphi_\theta$ for the EMP curve to $e^{i\theta}$ satisfies $\varphi_\theta:[x_\theta,0] \rightarrow [0,y_\theta]$, where
    \begin{align*}
        x_\theta = -\sqrt{\frac{\sin ^3(\theta )}{\sin (\theta )-\theta  \cos (\theta )}} \qquad \text{and} \qquad y_\theta = \sqrt{\frac{\sin ^3(\theta )}{\sin (\theta )-\theta  \cos (\theta )+\pi  \cos (\theta )}}.
    \end{align*}
    By inspection the corresponding ratio
    \begin{align*}
        r(\theta) := -\frac{x_\theta}{y_\theta} = \sqrt{1+\frac{\pi \cos(\theta)}{\sin(\theta) - \theta \cos(\theta)}}
    \end{align*}
    maps $(0,\pi)$ bijectively to $(0,\infty)$, showing that for each $x_r < 0 < y_r$, there is a unique EMP curve which welds these points to the base of the curve.  Asymptotically,
    \begin{align}\label{Eq:rFromThetaWang}
        r(\theta) = 1 - \frac{\pi}{2}\Big(\theta - \frac{\pi}{2}\Big) + O\Big(\theta - \frac{\pi}{2}\Big)^2
    \end{align}
    as $\theta \rightarrow \pi/2$, which inverts as
    \begin{align*}
        \theta(r) = \frac{\pi}{2} - \frac{2}{\pi}(r - 1) + O(r - 1)^2, \qquad r \rightarrow 1.
    \end{align*}
   The associated energy is thus
    \begin{align*}
        -8\log(\sin(\theta(r))) = \frac{16}{\pi^2}(r- 1)^2 + O(r-1)^3, \qquad r \rightarrow 1.
    \end{align*}
    For the EMW curves, on the other hand, \eqref{Eq:EMWEnergyFormula} says the energy at ratio $r$ is
    \begin{equation}\label{Eq:EMWEnergyTaylor}
        -8 \log\Big( \frac{2\sqrt{r}}{1+r} \Big) = (r-1)^2 + O(r-1)^3
    \end{equation}
    as $r \rightarrow 1$, and so we find \eqref{Lim:EnergySameBase} holds.
    
    For the second claim, first observe from the universal curve property of Theorem \ref{Thm:EMW}$(\ref{Thm:EMWSingleCurve})$ that for each $\theta$ is there is an EMW curve with tip $e^{i\theta}$.  We proceed to use the conformal map constructed in the proof to show there is a unique such curve and to find $\theta$ as a function of $r =-x/y$.  By symmetry it suffices to consider the case $r<1$, as there.  Recall the map $\tilde{F}_t$ in \eqref{Eq:TildeFt} sends zero to $-i+e^{i\theta}$, where $\Arg(i+e^{i\theta}) = 2\beta$ is twice the argument $\beta$ of the tip $\Gamma(t_r)$ of the corresponding point on the universal curve $\Gamma([0,t_r]) = \sqrt{C_\theta}$.  See Figure \ref{Fig:EMWUniversalConstruction}.  We thus have
    \begin{align}\label{Eq:EMWBeta}
        2\beta = \frac{3\pi}{2} - \arctan\Big( \frac{\Real \tilde{F}_t(0)}{\Imag \tilde{F}_t(0)} \Big) = \frac{3\pi}{2} - \arctan\Big( \frac{2(1+r)+(1-r)\log(r)}{\pi(1-r)} \Big).
    \end{align}
    By inspection, the function $r \mapsto \beta(r)$ is a bijection of $(0,1)$ to $(\pi/2,\pi)$, showing that for each $\beta$ there is a unique EMW curve with tip $e^{i\beta}$.  Furthermore,
    \begin{align*}
        \beta(r) = \frac{\pi}{2} - \frac{\pi}{8}(r-1) + O(r-1)^2, \qquad r \rightarrow 1^-,
    \end{align*}
    which inverts as
    \begin{align}\label{Eq:rFromThetaEMW}
        r(\beta) = 1 - \frac{8}{\pi}\Big(\beta- \frac{\pi}{2} \Big) + O\Big(\beta- \frac{\pi}{2} \Big)^2, \qquad \beta \rightarrow \frac{\pi}{2},
    \end{align}
    and hence from \eqref{Eq:EMWEnergyTaylor} the energy is
    \begin{align*}
        -8\log \Big( \frac{2\sqrt{r(\beta)}}{1+r(\beta)}\Big) = \frac{64}{\pi^2}\Big(\beta - \frac{\pi}{2}\Big)^2 + O\Big(\beta - \frac{\pi}{2}\Big)^3.
    \end{align*}
    The EMP curves $\gamma_\beta$, on the other hand, satisfy
    \begin{align*}
        I(\gamma_\beta) = -8\log(\sin(\beta)) = 4\Big(\beta - \frac{\pi}{2}\Big)^2 + O\Big(\beta - \frac{\pi}{2}\Big)^4
    \end{align*}
    as $\beta \rightarrow \pi/2$, yielding the claimed ratio in the limit.
\end{proof}

\begin{remark}\label{Remark:Reciprocals}
    It does not appear to be \emph{a priori} obvious that the two limits in Theorem \ref{Thm:EMWWang} should be reciprocals, as we proceed to explain.  Note our minimal-energy formulas have expansions \eqref{Eq:EMWEnergyTaylor} and
    \begin{align*}
        -8\log(\sin(\theta)) &= 4 \left(\theta -\frac{\pi
  }{2}\right)^2+O\left(\theta -\frac{\pi
  }{2}\right)^4,
  \qquad \theta \rightarrow \frac{\pi}{2}.
    \end{align*}
    
    Let $\theta$ be the argument of the tip of a curve welding $x$ to $y$, and suppose that as $\theta \rightarrow \pi/2$, the ratio $-x/y$ approaches unity $C$ times as fast for the EMW curves as for the EMP family.  That is, suppose the EMP curves' ratio $r_W$ satisfies $r_W(\theta) = 1 + D(\theta-\pi/2) + O(\theta-\pi/2)^2$, while the EMW's $r_E$ satisfies $r_E(\theta) = 1 + CD(\theta-\pi/2) + O(\theta-\pi/2)^2$.  Then the energy of the EMP curve as a function of $r$ would be 
    \begin{align*}
        -8\log(\sin(\theta(r))) = \frac{4}{D^2}(r-1)^2 + O(r-1)^3, 
    \end{align*}
    which combined with \eqref{Eq:EMWEnergyTaylor} shows the first ratio in Theorem \ref{Thm:EMWWang} would be $D^2/4$.  Similarly, the second ratio would be $C^2D^2/4$, and hence these are reciprocals if and only if  $C^2D^4=16$.  From \eqref{Eq:rFromThetaWang} we see $D=-\pi/2$, and so the EMW curves' ratio would need to approach $1$ precisely $16/\pi^2$ as fast as for the EMP's as $\theta \rightarrow \pi/2$.  From \eqref{Eq:rFromThetaEMW} this is the case, but from the outset it is not obvious that it should be so.   
\end{remark}

\begin{remark}\label{Remark:EnergiesNotAlwaysSame}
\begin{figure}
    \centering
    \includegraphics[scale=0.75]{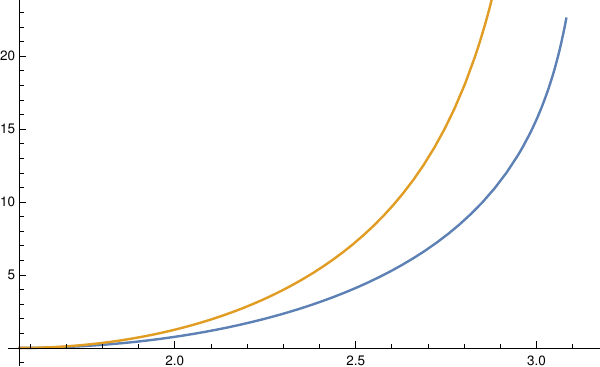}
    \caption{\small Energies for the Wang and EMW curves with the tip $e^{i\theta}$ in blue and orange, respectively, where the $x$-axis is $\theta \in (\pi/2, \pi)$ and the $y$-axis the energy.}
    \label{Fig:EnergySameTip}
\end{figure}
We also remark that the asymptotic comparisons in \eqref{Lim:EnergySameBase} and \eqref{Lim:EnergySameTip} do not hold for all ratios $r$ and all angles $\theta$.  With respect to \eqref{Lim:EnergySameTip}, for instance, it is easy to intuitively see that the energy of an EMW curve whose tip has angle $\theta$ close to $\pi^-$ (or $0^+$) is much larger than that for the corresponding EMP curve.  This is because the Wang asymptotically hits the real line with angle $\pi/3$ as $\theta \rightarrow \pi^-$ (reflect the plots in Figure \ref{Fig:BoundaryGeodesics} over the imaginary axis), while the EMW curve turns in to intersect $\mathbb{R}$ tangentially (see Figure \ref{Fig:UniversalEMW}).  At an intuitive level, the latter requires vastly more energy than the former.  This is confirmed in Figure \ref{Fig:EnergySameTip}, where we plot the EMP energy $-8\log(\sin(\beta))$ and the composition of the energy formula $-8\log(2\sqrt{r}/(1+r))$ for the EMW curves with the numerical inverse $r(\beta)$ of the EMW angle formula $\beta(r)$ given by \eqref{Eq:EMWBeta}.   
\end{remark}

\section{Appendix 1: other instances of upwards SLE$_0(\rho,\rho)$}\label{Sec:Appendix}
Theorem \ref{Thm:EMW}($\ref{Thm:EMWEnergy}$) says that the welding minimizers are upwards SLE$_0(-4,-4)$ starting from 
\begin{align*}
    (\xi(0), V_1(0),V_2(0)) = (0,x,y).
\end{align*}
The following lemma says that if we replace $-4$ with $-3$ or $-2$, we obtain orthogonal circular arc segments or straight line segments, respectively.

\begin{lemma}\label{Lemma:UpwardsSLE}
    Fix $x<0<y$. 
    \begin{enumerate}[$(i)$]
    \item\label{Lemma:StraightSlitUpwardsSLE} The hull $\gamma_\tau$ generated on $[0,\tau]$ by upwards SLE$_0(-2,-2)$ started from $(\xi(0), V_1(0), V_2(0)) = (0, x,y)$, where $\tau$ is the first hitting time of $\xi$ and one of the $V_j$, is a (straight) line segment whose angle of intersection with the right-hand side of $\mathbb{R}$ is $\frac{\pi y}{y-x}$, and the images of the force points $V_1(\tau)$ and $V_2(\tau)$ are the two prime ends corresponding to the base of $\gamma_\tau$. 
    \item\label{Lemma:CAUpwardsSLE} The hull $\tilde{\gamma}_t$ generated by upwards SLE$_0(-3,-3)$ started from $(\xi(0), V_1(0), V_2(0)) = (0, x,y)$ is a circular arc which meets $\mathbb{R}$ orthogonally, where $t$ is any time $0 < t \leq \tau$, the first hitting time of $\xi$ and one of the $V_j$.  At time $\tau$, $V_1(\tau)$ and $V_2(\tau)$ are the two prime ends corresponding to the base of $\tilde{\gamma}_\tau$.
    \end{enumerate}
\end{lemma}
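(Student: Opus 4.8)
The plan is to write the defining ODE system for upwards SLE$_0(\rho,\rho)$ in terms of the centered force points, solve it explicitly in each case, and identify the resulting hull by recognizing its reversed driving function as one of the classical ``exactly solvable'' drivers. Writing $x_t := V_1(t)-\xi_t$ and $y_t := V_2(t)-\xi_t$ for the centered force points, the $\kappa=0$ specialization of the upwards system of \S\ref{Sec:BackgroundSLE} gives
\begin{align*}
\dot\xi_t = \frac{\rho}{x_t}+\frac{\rho}{y_t}, \qquad \dot x_t = \frac{-2-\rho}{x_t}-\frac{\rho}{y_t}, \qquad \dot y_t = \frac{-2-\rho}{y_t}-\frac{\rho}{x_t},
\end{align*}
which is exactly the EMW system \eqref{EMWDiffEqSystem} with $\rho$ in place of $-4$. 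Once $\xi_t$ is found, the downwards driver of the generated hull is its reversal $\lambda(s)=\xi(\tau-s)-\xi(\tau)$, and it then suffices to match $\lambda$ against the driver $c\sqrt{s}$ of a line segment (Lawler, as recalled in Example \ref{Eg:NotEquiv}) or the driver $a(1-\sqrt{1-bs})$ of an orthogonal circular arc from \eqref{Eq:CADriver1}.

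For part $(\ref{Lemma:StraightSlitUpwardsSLE})$, take $\rho=-2$, so $\dot x_t = 2/y_t$ and $\dot y_t = 2/x_t$. A direct computation gives $\frac{d}{dt}(x_t/y_t)=0$ and $\frac{d}{dt}(x_ty_t)=4$, so the ratio $\sigma:=x_t/y_t=x_0/y_0$ is conserved and $x_ty_t=x_0y_0+4t$, whence $\tau=-x_0y_0/4$ and $y_t=2\sqrt{(\tau-t)/(-\sigma)}$. Since $\dot\xi_t=-(\dot x_t+\dot y_t)$, we obtain $\xi_t=(x_0+y_0)-(x_t+y_t)$, and reversing yields $\lambda(s)=-\frac{2(1+\sigma)}{\sqrt{-\sigma}}\sqrt{s}$. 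Thus $\lambda$ is a multiple of $\sqrt{s}$ and the hull is a straight line segment. To pin the angle I would use the conserved ratio together with the slit map $F(z)=(z-\alpha)^\alpha(z+1-\alpha)^{1-\alpha}$ of Example \ref{Eg:NotEquiv}: that map welds $\alpha-1$ and $\alpha$ to the base of the segment at angle $\alpha\pi$, so matching the ratio $-x_0/y_0=(1-\alpha)/\alpha$ forces $\alpha=y_0/(y_0-x_0)$ and hence intersection angle $\pi y_0/(y_0-x_0)$.

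For part $(\ref{Lemma:CAUpwardsSLE})$, take $\rho=-3$. Setting $s_t:=x_t+y_t$ and $d_t:=x_t-y_t$, one finds $\dot s_t=4s_t/(x_ty_t)$ and $\dot d_t=2d_t/(x_ty_t)$, so $\frac{d}{dt}(s_t/d_t^2)=0$; writing $K:=s_0/d_0^2$ reduces the system to the single separable equation $\frac{d}{dt}\big(\frac{K^2}{4}d_t^4-\frac12 d_t^2\big)=8$. Solving this and using $\dot\xi_t=-\frac34\dot s_t$ (so $\xi_t=\frac34(s_0-s_t)$), I expect the reversal of the driver on any subinterval $[0,t]$ to take the form $\lambda(s)=a(1-\sqrt{1-bs})$ with the scale-invariant relation $a^2b=18$. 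Since the scaling rule \eqref{Eq:ScaledDriver} applied to Lind's orthogonal-arc driver \eqref{Eq:CADriver1} shows that precisely the drivers with $a^2b=18$ generate orthogonal circular arcs, this identifies $\tilde\gamma_t$ as an orthogonal circular arc for every $0<t\le\tau$.

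The main obstacle will be the bookkeeping in part $(\ref{Lemma:CAUpwardsSLE})$: extracting $s_t$ (hence $\xi_t$) in closed form from the quartic relation, reversing on an arbitrary subinterval, and simplifying to the exact scale-invariant condition $a^2b=18$ rather than merely ``some circular arc.'' The remaining assertions — that both force points meet $\xi$ simultaneously at $\tau$, so that $V_1(\tau),V_2(\tau)$ are the two base prime ends — follow as in the welding setting: the conserved ratio ($\rho=-2$) and the identity $s_t=Kd_t^2$ together with $x_ty_t<0$ ($\rho=-3$) force $x_t,y_t\to0$ simultaneously, which is exactly the statement that the force points weld at the base, in direct analogy with Lemma \ref{Lemma:EvenApproach}.
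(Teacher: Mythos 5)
Your proposal is correct, but it runs in the opposite direction from the paper's proof. The paper starts from the known curves and verifies the forcing ODE: for $(i)$ it writes down the explicit slit map $(z-y_t)^{y_t/(y_t-x_t)}(z-x_t)^{-x_t/(y_t-x_t)}$, reads off $\lambda(t)=-(x_t+y_t)$ and the welding endpoints, and checks $\dot\lambda=2/x_t+2/y_t$; for $(ii)$ it takes Lind's arc driver \eqref{Eq:CADriver1}, computes $g_t(0\pm)$ explicitly, and checks $\dot\lambda=3/(g_t(0-)-\lambda)+3/(g_t(0+)-\lambda)$, covering all initial conditions by scaling since the ratio sweeps out $(1,\infty)$. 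You instead integrate the SLE$_0(\rho,\rho)$ system forward from $(0,x,y)$ using conserved quantities and then recognize the reversed driver; this is self-contained (it does not require already knowing $g_t(0\pm)$ for the arc) at the cost of more ODE bookkeeping, and both routes rely on the same Lipschitz-uniqueness of the system that the paper establishes for \eqref{EMWDiffEqSystem}. Your deferred computation in $(ii)$ does close: with $K=s_0/d_0^2$ one gets $s_t=\big(1-\sqrt{1-32K^2(\tau-t)}\big)/K$, hence the reversed driver on $[0,t]$ is $a(1-\sqrt{1-br})$ with $a=-3\sqrt{c}/(4K)$, $b=32K^2/c$, $c=1-32K^2(\tau-t)$, so $a^2b=18$ for every $t\le\tau$ as required. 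Two small points to tighten: the simultaneous-hitting claim for $\rho=-3$ needs one more line — since $x_ty_t<0$ keeps $d_t^2<1/K^2$, the conserved-quantity relation $\tfrac{K^2}{4}d^4-\tfrac12 d^2=Q_0+8t$ lives on the branch where the left side is strictly decreasing in $d^2$, so its linear increase to $0$ forces $d_t^2\to0$ and hence $x_t,y_t\to0$ together (the appeal to Lemma \ref{Lemma:EvenApproach} is only a loose analogy, since that lemma is an energy argument); and the degenerate case $x=-y$ (where $K=0$, resp. $\sigma=-1$) should be noted separately, though it just gives the vertical segment.
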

\begin{remark}
    Krusell \cite{KrusellComm} has shown that the orthogonal circular arc which intersects $\mathbb{R}$ at 0 and $y$ is also both downwards SLE$_0(-6)$ and downwards SLE$_0(-3)$, with forcing starting at $y$ and $y/2$, respectively.
\end{remark}
\begin{proof}
    The upwards SLE$_0(-2,-2)$ claim in ($\ref{Lemma:StraightSlitUpwardsSLE}$) is equivalent to showing that $\gamma$'s downward Loewner flow is downwards SLE$_0(-2,-2)$ with forcing starting at the two prime ends at the base of $\gamma$.  That is, we need to show that for any $t>0$,
    \begin{align}\label{Eq:SLE-2NTS}
       \dot{\lambda}(t) = \frac{2}{g_t(0-) -\lambda(t)} + \frac{2}{g_t(0+)-\lambda(t)} =: \frac{2}{x_t} + \frac{2}{y_t}.
    \end{align}
    We verify this by explicit computation using the centered mapping up function $F_t$ (which is explicit, in contrast to the centered mapping down function $G_t$).

    For fixed $u < 0 < v$, recall that the conformal map
    \begin{align}\label{Eq:SlitMapUp}
    F(z) := (z-v)^{\frac{v}{v-u}}(z-u)^{\frac{-u}{v-u}} = z - (u+v) + \frac{uv}{2}\frac{1}{z} + O\Big( \frac{1}{z^2}\Big), \qquad z \rightarrow \infty,
    \end{align}
    maps $\mathbb{H}$ to the complement of a straight line segment $\tilde{\gamma}$ with base at the origin, taking the two intervals $[u,0]$ and $[0,v]$ to the two sides of $\tilde{\gamma}$.  The line segment $\tilde{\gamma}$ angle 
    \begin{align}\label{Eq:SlitAngle}
        \frac{\pi v}{v-u}
    \end{align}
    with $\mathbb{R}_{\geq 0}$ and capacity time $2t = -uv/2$ (see \cite[``The Slit Algorithm'']{MRZip} for a derivation).  Thus the centered mapping-up function $F_t:\mathbb{H} \rightarrow \mathbb{H} \backslash \gamma[0,t]$ for $\gamma$ is of this form, with $u=x_t$ and $v=y_t$.  Writing $\alpha := \frac{y_t}{y_t-x_t}$ as usual, we see $x_t = \frac{\alpha-1}{\alpha} y_t$ by \eqref{Eq:SlitAngle} and 
    \begin{align}\label{Eq:StraightSlitWeldEndpoints}
        t = -\frac{x_ty_t}{4} = \frac{1-\alpha}{4\alpha}y_t^2 \qquad \Rightarrow \qquad y_t = 2 \sqrt{\frac{\alpha}{1-\alpha}}\sqrt{t}, \quad x_t = -2\sqrt{\frac{1-\alpha}{\alpha}}\sqrt{t}.
    \end{align}
    From \eqref{Eq:SlitMapUp} we also see
    \begin{align*}
        \lambda(t) = -(x_t+y_t) = \frac{2(1-2\alpha)}{\sqrt{\alpha(1-\alpha)}}\sqrt{t}. 
    \end{align*}
    Differentiating and using the formulas for $x_t$ and $y_t$ in \eqref{Eq:StraightSlitWeldEndpoints} shows \eqref{Eq:SLE-2NTS} holds.
    
    For $(\ref{Lemma:CAUpwardsSLE})$ we start by considering the orthogonal circular arc growing from the origin towards $x=2\sqrt{2}$, and then argue this covers all possible cases.  This circular arc has downwards driver $\lambda(t) = 3\sqrt{2} - 3\sqrt{2}\sqrt{1-t}$ for $0 \leq t \leq 1$ by \eqref{Eq:CADriver1} (scale by $r=2\sqrt{2}$ and recall \eqref{Eq:ScaledDriver}).  We want to show that $(a)$ $\lambda$ is generated by downwards SLE$_0(-3,-3)$ with forcing starting at the two prime ends corresponding to the base, and that $(b)$ after any amount of time flowing down $t < 1$, what remains is an orthogonal circular arc segment.  Combined, this is equivalent to upwards SLE$_0(-3,-3)$ claim in the lemma statement.
    
    For $(a)$ we must show
    \begin{align}\label{Eq:UpwardsSLE33}
        \dot{\lambda}(t) = \frac{3}{g_t(0-)-\lambda(t)} + \frac{3}{g_t(0+)-\lambda(t)}
    \end{align}
    for any $0 <t <1$.  Scaling \eqref{Eq:CAWeldEndpoints} by $2\sqrt{2}$, we find
    \begin{align*}
        g_t(0-)-\lambda(t) &= -2\sqrt{2} +2\sqrt{2}\sqrt{1-t}-2\sqrt{2}\sqrt{1-\sqrt{1-t}},\\
        g_t(0+)-\lambda(t) &= -2\sqrt{2} +2\sqrt{2}\sqrt{1-t}+2\sqrt{2}\sqrt{1-\sqrt{1-t}},
    \end{align*}
    from which arithmetic shows that \eqref{Eq:UpwardsSLE33} holds.  Furthermore, $(b)$ immediately follows from the self-similarity of the circular arc family: what remains upon mapping down is still a member of this family \cite[Theorem 3.2]{LoewnerCurvature}.  
    
    Note that a scaled SLE$_0(\rho,\rho)$ is still an SLE$_0(\rho,\rho)$, as is an SLE$_0(\rho, \rho)$ reflected in the imaginary axis.  Now, since the ratio $\frac{\lambda(t)-V_1(t)}{V_2(t)-\lambda(t)}$ for the above circular arc attains all values in the interval $(1,\infty)$ as $t$ varies in $(0,1)$, after scaling and reflection this case covers all possible initial conditions $(0,x,y)$. 
\end{proof}

\section{Appendix 2: the orthogonal circular arc family}\label{Sec:Appendix2}
In this appendix we compare hyperbolic geodesic segments in $\mathbb{H}$, or the orthogonal circular arc family, to the EMP and EMW curves. The geodesic segments are upwards SLE$_0(-3,-3)$ by Lemma \ref{Lemma:UpwardsSLE}, and the next lemma says they have a distinguished place in terms of energy, both with respect to the EMP and EMW families.
\begin{lemma}\label{Lemma:CircularArc} If $\theta \neq \pi/2$, the unique orthogonal circular arc from 0 to $z_0 = re^{i \theta}$ uses exactly $9/8$th of the energy of the EMP curve to $z_0$. If $x<0<y$ with $-x \neq y$, the unique orthogonal circular arc which welds $x$ to $y$ uses exactly $9/8$th the energy of the EMW curve which welds these points.
\end{lemma}
Of course, if $\theta = \pi/2$ or $-x=y$, then all the curves in question are the same and use zero energy.  The surprising aspect here, perhaps, is that there is instant and constant disagreement as soon as we deviate from the vertical line segment.  Given this lemma, we can guess the constants in Theorem \ref{Lemma:Inf98}, because sufficiently-smooth curves should locally be like circular arcs.\footnote{One could make this precise by comparing the expansion of the circular arc from $0$ to $\gamma(\delta)$ to \eqref{Eq:InfinitesimalCurve}, and the expansion of the circular arc which welds $x(\delta)$ and $y(\delta)$ to \eqref{Eq:InfinitesimalWelding}.  Theorem \ref{Lemma:Inf98} is saying the ``local similarity'' holds at least in the energy sense.}

\begin{proof}
By reflection and scaling invariance it suffices to consider the orthogonal circular arc $\gamma_1$ from $0$ to $x=1$, and here the results follows from explicit computations.  Indeed, let $\tau$ be the time such that $\gamma_1(\tau)=1$, and consider the segment $\gamma_1([0,t])$ for some $t <\tau$, and write $\gamma_1(t) = re^{i\theta}$.  The centered mapping down function $G_t$ for $\gamma_1([0,t])$ is a the composition of a \Mob automorphism $M_ 1( z ) = \frac{z}{1-z}$ of $\mathbb{H}$ sending 1 to $\infty$, followed by $S(z)  :=  \sqrt{z^2+\cot^2(\theta)}$ which pulls down a segment of the imaginary axis (we take the branch cut on the real line), followed by a second \Mob map   $M_2(z) = \frac{\sin^2(\theta)z}{z+\csc(\theta)}$ renormalizing at $\infty$. We find
\begin{align*}  
    G_t(z) = \frac{\sin^2(\theta) \sqrt{\big( \frac{z}{1-z} \big)^2+\cot^2(\theta)}}{\sqrt{\big( \frac{z}{1-z} \big)^2+\cot^2(\theta)}+\csc(\theta)} = z - \frac{3}{2}\cos^2(\theta) + \frac{1}{4}\big( 1 -\sin^4(\theta) \big)\frac{1}{z} + O\Big( \frac{1}{z^2} \Big), \qquad z \rightarrow  \infty.
\end{align*}
Recalling \eqref{Eq:LoewnerNormalize} and \eqref{Eq:hcapNormalize}, we see the time $t(\theta)$ needed to achieve argument $\theta$ is 
\begin{align}\label{Eq:CATime}
    t(\theta) = \frac{1}{8}\big(1-\sin^4(\theta)\big),
\end{align}
and hence the downwards driver is
\begin{align}\label{Eq:CADriver1}
    \lambda_1(t) = \frac{3}{2}\cos^2(\theta(t)) = \frac{3}{2}(1-\sqrt{1-8t}), \qquad 0 \leq t \leq \frac{1}{8},
\end{align}
and the energy to a given $\theta$ is
    \begin{align*}
        \frac{1}{2} \int_0^{t(\theta)} \frac{36}{1-8s}ds = -9 \log(\sin(\theta)).
    \end{align*}
The first claim holds in light of \eqref{Eq:WangEnergy}.
    
    For the welding result, we note from the above conformal map that the centered conformal welding $\varphi_t$ for the arc segment $\gamma_1[0,t]$, $0<t<1/8$, satisfies $\varphi_t:[x_t,0] \rightarrow [0,y_t]$, where
    \begin{equation}\label{Eq:CAWeldEndpoints}
        \begin{aligned}
        x_t = M_2(-\cot(\theta)) &= -1 + \sqrt{1-8t} - \sqrt{1-\sqrt{1-8t}},\\
        y_t =M_2(\cot(\theta) &= -1 + \sqrt{1-8t} + \sqrt{1-\sqrt{1-8t}}.
        \end{aligned}
    \end{equation}
    In fact, the welding $\varphi_t$ is simply $x \mapsto -x$ conjugated by $M_2$, which is explicitly 
    \begin{align}\label{Eq:CircularArcWelding}
        \varphi_t(x) = - \frac{\sqrt{1-8t}\,x}{\sqrt{1-8t} -2x}.
    \end{align}
    By inspection, the ratio $\alpha_t = \frac{y_t}{y_t-x_t}$ is monotonically decreasing, and inverting we have the unique time $t(\alpha)$ to obtain a given $\alpha \in (0,1/2)$ is
    \begin{align}\label{Eq:CATForAlpha} 
        t(\alpha) = \frac{1}{8} -2\alpha^2(1-\alpha)^2,
    \end{align}
    and thus the corresponding energy is
    \begin{align*}
        \frac{1}{2} \int_0^{t(\alpha)} \frac{36}{1-8s}ds = -9\log(2\sqrt{\alpha(1-\alpha)}),
    \end{align*}
     exactly $9/8$ of \eqref{Eq:EMWEnergyFormula}. 
\end{proof}

The hyperbolic geodesic segments also share the ``universality'' properties of both the EMP and EMW families.
\begin{lemma}
    The orthogonal circular arc family satisfies the following:
    \begin{enumerate}[$(i)$]
        \item\label{Lemma:CAUniversalDriver} For any fixed $\theta \in (0,\pi)\bs \{\pi/2\}$, both the upwards driver and the conformal welding for the single orthogonal circular arc to $e^{i\theta}$ generate \emph{all} orthogonal circular arcs for angles $\alpha \in (0,\pi)\bs \{\pi/2\}$ up to translation, scaling, and reflection in the imaginary axis.
        \item\label{Lemma:CAUniversalCurve} There is an explicit algebraic curve which generates all orthogonal circular arcs up to translation, scaling and reflection in the imaginary axis.
    \end{enumerate}
\end{lemma}
\begin{proof}
    Take any half circle centered on $\mathbb{R}$ for $(\ref{Lemma:CAUniversalCurve})$.  For $(\ref{Lemma:CAUniversalDriver})$, let $\gamma_\theta$ be the circular arc segment from the origin to $e^{i\theta}$, where without loss of generality $0 < \theta < \pi/2$.  Its capacity time is $\tau = \frac{1}{8}\big(1 - \sin^4(\theta) \big)$ by \eqref{Eq:CATime}, and by \eqref{Eq:CAWeldEndpoints} and \eqref{Eq:CircularArcWelding} its welding is
    \begin{align}\label{Eq:CircularArcWelding2}
        \varphi(x) = - \frac{\sqrt{1-8\tau}\,x}{\sqrt{1-8\tau} -2x}, \qquad  -1 + \sqrt{1-8\tau} - \sqrt{1-\sqrt{1-8\tau}} \leq x \leq 0.
    \end{align}
    If we scale $\gamma_\theta$ by a factor of $r := 1/\sqrt{1-8\tau}$, the resulting welding is 
    \begin{align*}
        r \varphi(x/r) = - \frac{x}{1-2x}, \qquad -\frac{1}{\sqrt{1-8\tau}} + 1 - \sqrt{\frac{1}{\sqrt{1-8\tau}}-1} \leq x \leq 0.
    \end{align*}
    Since the welding formula does not depend on $\tau$, we see that we may generate \emph{all} circular arcs, up to translation, scaling and reflection in the imaginary axis, from the single welding \eqref{Eq:CircularArcWelding2}.
    
    By \eqref{Eq:CADriver1} the downwards driver for $\gamma_\theta$ is $\lambda(t) = \frac{3}{2}(1 -\sqrt{1-8t})$, $0 \leq t \leq \tau$, and so the downwards driver for $r \gamma_\theta$ is 
    \begin{align*}
        r \lambda(t/r^2) = \frac{3}{2\sqrt{1-8\tau}}\big(1-\sqrt{1-8t(1-8\tau)}  \,\big), \qquad 0 \leq t \leq \frac{\tau}{1-8\tau}.
    \end{align*}
    The corresponding (centered) upwards driver is
    \begin{align*}
        \xi(t) = r \lambda\Big(\frac{1}{r^2}\big( \frac{\tau}{1-8\tau} - t\big) \Big) - r \lambda\Big(\frac{\tau}{r^2(1-8\tau)}  \Big) = \frac{3}{2}\big(1 - \sqrt{1+8t} \,\big).
    \end{align*}
    Since this formula is also independent of $\theta$, we conclude the upwards driver corresponding to $\lambda$ generates all orthogonal circular arcs up to scaling, translation and reflection.    
\end{proof}

\bibliographystyle{acm}
\bibliography{Tp}

\end{document}